\documentclass[10pt, a4paper]{article} 
\usepackage[a4paper, left=3cm, right=3cm, top=3.5cm, bottom=3.5cm]{geometry}
\usepackage{amsmath}
\usepackage{amssymb}
\usepackage{mathrsfs}
\usepackage{amsthm}
\usepackage[english]{babel}
\usepackage{xcolor}
\usepackage{verbatim}
\usepackage{stmaryrd}
\usepackage{hyperref}
\usepackage[nameinlink,capitalize]{cleveref}
\usepackage{cancel}
\usepackage{enumitem}
\usepackage{csquotes}
\usepackage{dsfont}
\usepackage[
backend=biber,
style=alphabetic,
sorting=nyt,
doi=true,
isbn=false,
url=false,
]{biblatex}
\def\ud{\mathrm d}

\newcommand{\tnorm}[1]{\left\vert\kern-0.25ex\left\vert\kern-0.25ex\left\vert #1 
    \right\vert\kern-0.25ex\right\vert\kern-0.25ex\right\vert}
\newcommand{\R}{\mathbb{R}}
\newcommand{\E}{\mathbb{E}}

\newcommand{\esssup}{\mathop{\mathrm{ess\,sup}}}

\theoremstyle{definition}
\newtheorem{defn}{Definition}[section]

\newtheorem{rmk}[defn]{Remark}

\theoremstyle{plain}
\newtheorem{thm}[defn]{Theorem}
\newtheorem*{thm*}{Theorem}
\newtheorem{lemma}[defn]{Lemma}

\newtheorem{prop}[defn]{Proposition}

\newtheorem*{assum*}{Assumptions}

\usepackage[textsize=tiny]{todonotes}
\title{Mean-field games with rough common noise: 
the linear-quadratic case}
\author{Peter K. Friz\thanks{Technische Universit\"at Berlin and Weierstraß-Institut (friz@math.tu-berlin.de)}, Ioannis Gasteratos \thanks{Technische Universit\"at Berlin (i.gasteratos@tu-berlin.de)}, Ulrich Horst\thanks{Humboldt Universit\"at zu Berlin (horst@math.hu-berlin.de)}, Stefanos Theodorakopoulos\thanks{Technische Universit\"at Berlin (stefanos.theodorakopoulos@tu-berlin.de)}          }

\begin{document}

\maketitle

\begin{abstract}
Motivated by mean-field games (MFG) with common noise on the one hand and pathwise stochastic control theory on the other, we formulate here a linear-quadratic (LQ) MFG with rough common noise, along with a satisfactory well-posedness theory for the linear-quadratic case. A novel Volterra-type (or mild) formulation allows us to keep technical (rough stochastic) considerations to a minimum. We derive a characterization of the optimal state and optimal control through a rough forward-backward SDE (rough FBSDE), and obtain existence and uniqueness.
A number of stability estimates are established, and in particular we show continuous dependence of the \emph{It\^o-Lions-Lyons} map (equilibrium law as a function of the rough common noise). In a final section, we discuss independent Brownian randomization of the rough common noise. The resulting stochastic problem can be seen as an MFG extension of pathwise stochastic control problems.
\end{abstract}

\smallskip
\noindent \textbf{Keywords.} Linear-quadratic, stochastic optimal control, mean field games, rough paths, common noise, stochastic Volterra equations, Riccati equations, rough forward-backward  SDEs.

\smallskip
\noindent \textbf{MSC (2020).} 
91A16,  
93E20,  
49N10,  
60L20,  
60H20,  
45D05,  
60L50.  

\smallskip

\tableofcontents

\section{Introduction}

Mean field game (MFG) theory provides a mathematical framework for analyzing strategic interactions in large populations of agents. Introduced independently by Lasry and Lions \cite{Lasry2006,Lasry2007} and Huang, Caines, and Malhamé \cite{huang2006,Huang2007}, MFGs describe the asymptotic behavior of Nash equilibria in $N$-player stochastic differential games as $N\to\infty$, when each individual player has negligible influence on the overall system but interacts through aggregate quantities such as the empirical distribution of states. The central idea of MFG theory is to replace the $N$-player game with a representative-agent control problem coupled to a consistency condition: each agent optimizes a cost functional that depends on the time-evolving distribution of the population and, in equilibrium, this distribution must coincide with the law of the optimally controlled state. This yields a characterization of equilibria as a fixed-point problem in the space of probability measures. 

Over the past two decades, two complementary approaches to MFG theory have emerged: an analytic (PDE) approach and a probabilistic (stochastic) approach. In their original formulation \cite{Lasry2006,Lasry2007}, Lasry and Lions characterized MFG equilibria through a coupled system of nonlinear partial differential equations consisting of a backward Hamilton--Jacobi--Bellman (HJB) equation for the representative agent's value function and a forward Fokker--Planck (Kolmogorov) equation describing the evolution of the population distribution. The probabilistic approach was pioneered by Carmona and Delarue (see their two-volume monograph \cite{carmona2018probabilistic,carmona2018probabilistic_2}). In this framework, the representative agent's state follows a stochastic differential equation whose coefficients depend on the law of the state process itself, and equilibria are characterized via forward--backward SDEs (FBSDEs) of McKean--Vlasov type. The probabilistic formulation is particularly powerful in the presence of common noise, where the population distribution becomes stochastic and evolves as a random measure. Recent advances in rough stochastic analysis, control, and mean field models address rough semimartingales, pathwise control and SPDEs, rough SDEs, Malliavin calculus, McKean--Vlasov BSDEs, and modern control principles \cite{friz2023rough,FHL21,bugini2024malliavin,papapantoleon2024existenceuniquenesspropagationchaos,FLZ24,CHT24,horst2025pontryagin,FLZ25}. 

In this paper, we study a class of linear--quadratic mean field games with rough common noise. Our goal is to take the common noise to be a rough path $\pmb{\eta}$, build a well-posedness and stability theory in the LQ case, and study the dependence of the equilibrium law on the common signal through the map
\[
    \pmb{\eta}\longmapsto \mu^{\pmb{\eta}}.
\]
Although rough path theory has a substantial history in applications to stochastic optimal control, mean-field dynamics and finance, see, e.g., \cite{Cass2014,DiehlFrizGassiat2017,AC20,bailleul2020,Bailleul2021,Allan2023,FLZ24,CHT24,bank2025rough,friz2025mckeanvlasovequationsroughcommon,horst2025pontryagin}, its use in mean field games is only beginning. The desirability of bringing these topics together was already pointed out by Carmona and Delarue \cite{carmona2014master}:

\smallskip
{\it ``When analyzed within the probabilistic framework of the stochastic maximum principle, MFGs with a common noise [...] best understood [with] rough paths. Indeed integrals and differentials with respect to the conditioned paths can be interpreted in the sense of rough paths while the meaning of the others can remain in the classical It\^o calculus framework.''}
\smallskip

A sufficiently flexible hybrid rough It\^o theory arguably did not take shape until \cite{FHL21,friz2023rough,bugini2024malliavin}, despite related aspects in rough BSDE and stochastic filtering theory \cite{Diehl2012,CDFO13,DOR15}; see also \cite{allan2025roughsdesrobustfiltering,bugini2025rough}. In the present work, in a setting of linear but multivariate agent dynamics, we prove existence and uniqueness of equilibria and establish stability of the equilibrium with respect to the rough common noise. While our work draws on recent advances in rough and stochastic analysis, its implementation remains surprisingly self-contained due to an original mild, or Volterra-type, formulation of the relevant dynamics.

\subsection{Problem formulation}\label{subsec:theProblem}

Let $W$
be a Brownian motion, assume for the moment that $\pmb{\eta}$ is a (deterministic) {\sl regular} path. In this case the question is if there exist a flow of probability measures $\{\mu_t\}_{t\in[0,T]}$ for which
\begin{enumerate}
    \item[(i)] the representative-agent control problem admits a unique minimizer $\alpha^*$ in a suitable admissible set $\mathcal{A}$ for the functional
    \begin{align*}
        \mathcal{J}(\alpha;\pmb{\eta},\mu)
        :=\mathbb{E}\Big[\int_0^T \ell\big(t,X_t^{\pmb{\eta},\alpha,\mu},\mu_t,\alpha_t\big)\ud t
        +\Phi\big(X_T^{\pmb{\eta},\alpha,\mu},\mu_T\big)\Big],
    \end{align*}
    where $\alpha$ denotes an admissible control and $\xi$ denotes the starting value of the controlled state dynamics 
    \begin{align*}
        X_t^{\pmb{\eta},\alpha,\mu}
        =\xi
        &+\int_0^t b\big(s,X_s^{\pmb{\eta},\alpha,\mu},\mu_s,\alpha_s\big)\ud s
        +\int_0^t \Sigma\big(s,X^{\pmb{\eta},\alpha,\mu},\mu_s,\alpha_s\big)\ud W_s\\
        &+\int_0^t b^1\big(s,X^{\pmb{\eta},\alpha,\mu},\mu_s\big)\ud\pmb{\eta}_s;
    \end{align*}
    \item[(ii)] the mean field equilibrium condition holds:
    \begin{align*}
        \mu_t=\mathcal{L}\big(X_t^{\pmb{\eta},\alpha^*(\mu),\mu}\big),
        \qquad
        \alpha^*(\mu):=\operatorname*{argmin}_{\alpha\in\mathcal{A}}
        \mathcal{J}(\alpha;\pmb{\eta},\mu).
    \end{align*}
\end{enumerate}

If for any (deterministic) regular path $\pmb{\eta}$ a mean-field equilibrium $\mu^{\pmb{\eta}}$ exists and is unique, then the mapping 
\[
    \pmb{\eta}\longmapsto \mu^{\pmb{\eta}}
\]
is well-defined and we will call it {\it It\^o–Lions–Lyons map}. In this case it is
a natural question whether this mapping is continuous, that is, if the mean-field equilibrium is robust in ${\pmb{\eta}}$. 

The problem we propose in this article is to allow for $\pmb{\eta}$ to be a genuine rough path, inspired by MFGs with common noise. In a standard MFG with common noise, the common noise represents randomness that affects all agents simultaneously. This leads to a stochastic coupling where both the population distribution and individual strategies evolve together under shared uncertainty. 

In this paper we consider a novel type of common noise MFGs where the representative agent observes the common path at the beginning of the game and establish stability of equilibria w.r.t.~the noise path. It turns out that in such settings one needs to complement the equilibrium condition (ii) with a additional equilibrium condition on the  Gubinelli derivative on the equilibrium measure flow. Such a condition (which will become more manageable in the LQ setting, cf. \eqref{eq:rConsistency} below) is new and has no counterpart in classical MFG theory. 

\subsection{Main results and ideas of proof} 

In our LQ setting the representative agent controls linear rough stochastic dynamics for the form
\begin{equation}\label{eq rough_affineI}
\begin{aligned}
    X^{\pmb{\eta},\alpha,m}_t
    &=\xi+\int_0^t\big(A_sX^{\pmb{\eta},\alpha,m}_s+B_s\alpha_s+C_sm_s\big)\ud s
      +\int_0^t\Sigma_s\ud W_s\\
    &\quad+\int_0^t\big(A_s^1X^{\pmb{\eta},\alpha,m}_s+C_s^1m_s\big)\ud\pmb{\eta}_s,
\end{aligned}
\end{equation}
where $m$ is a fixed controlled rough path, later related to the mean of the state through \eqref{eq:rConsistency}. The agent aims to minimize the quadratic cost functional
\begin{equation}\label{eq:rough_costI}
\begin{aligned}
    \mathcal{J}^{\pmb{\eta},m}(\alpha)
    &=\E\!\left[\int_0^T \ell_t^m\big(X_t^{\pmb{\eta},\alpha,m},\alpha_t\big)\ud t
    +\Phi^m\big(X_T^{\pmb{\eta},\alpha,m}\big)\right],\\
    \ell_t^m(x,\alpha)
    &:=\frac12\Big(x^\top Q_tx+\alpha^\top R_t\alpha+(x-S_tm_t)^\top\bar Q_t(x - S_t m_t)\Big),\\
    \Phi^m(x)
    &:=\frac12\Big(x^\top Qx+(x - S m_T)^\top\bar Q(x-S m_T)\Big).
\end{aligned}
\end{equation}

As pointed out above, in our setting, the mean field equilibrium condition does not only reduce to equilibrium condition on the mean flow $m$ but also on its Gubinelli derivative $m'$, a feature not seen in classical theory. The resulting equilibrium condition is, for all $t\in[0,T]$,
\begin{equation}\label{eq:rConsistency}
    m_t=\mathbb{E}\Big[X_t^{\pmb{\eta},\alpha^*,m}\Big],
    \qquad
    m_t'=(A_t^1+C_t^1)\mathbb{E}\Big[X_t^{\pmb{\eta},\alpha^*,m}\Big].
\end{equation}
This is a genuinely rough consistency condition: the equilibrium parameter is a controlled rough path, and the fixed-point condition therefore identifies both the mean flow $m$ and its Gubinelli derivative $m'$. 

Under natural assumptions on the model parameters we prove that our rough MFG admits a unique solution. As is customary in LQ MFG theory, we solve the fixed-point problem at the level of the mean flow rather than directly in the space of probability measures. Of course, in presence of the appropriate uniqueness properties, the two fixed point problems are equivalent. In the present setting this has two advantages: it yields a natural rough extension of the classical LQ theory, and it avoids the introduction of measure-valued controlled rough paths \cite{bailleul2020}. 

The following informal theorem summarizes our main findings.

\begin{thm}
\begin{itemize}
  \item[(i)] Under natural assumptions on the coefficient data we derive the stochastic maximum principle (SMP) of the optimal control problem \eqref{eq rough_affineI}-\eqref{eq:rough_costI} in terms of rough FBSDE.

  \item [(ii)] Under further assumptions on the coefficient data (in agreement with
  classical ones in absence of $\pmb{\eta}$) we have global well-posedness of the
  rough Riccati equation and thus a well-posed rough LQ MFG \eqref{eq rough_affineI}-\eqref{eq:rConsistency}, with explicit
  formulae for the equilibrium measure flow and equilibrium actions. Moreover, the It\^o--Lions--Lyons map $\pmb{\eta}\longmapsto \mu^{\pmb{\eta}}$ is continuous in rough path metric.
  \item[(iii)] Upon (independent) Brownian rough path randomization of the rough path $\pmb{\eta}$ we solve a MFG with pathwise\footnote{Terminology inspired by pathwise stochastic control, cf. Section 1.3 for references.} common noise.
\end{itemize}
\end{thm}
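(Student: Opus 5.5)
The plan is to reduce everything to deterministic rough objects by a mild (Volterra) formulation. The first step is to remove the rough integral from \eqref{eq rough_affineI}. Let $\Gamma_{t,s}$ be the (matrix-valued) flow of the linear rough ODE $\ud \Gamma_{t,s}=A_t\Gamma_{t,s}\ud t+A^1_t\Gamma_{t,s}\ud\pmb{\eta}_t$, $\Gamma_{s,s}=I$, which is well defined for linear RDEs with controlled coefficients. Variation of constants then writes the state as
\begin{equation*}
X_t=\Gamma_{t,0}\xi+\int_0^t\Gamma_{t,s}(B_s\alpha_s+C_sm_s)\ud s+\int_0^t\Gamma_{t,s}\Sigma_s\ud W_s+\int_0^t\Gamma_{t,s}C^1_s m_s\ud\pmb{\eta}_s .
\end{equation*}
Here the stochastic integral is a Wiener integral with deterministic integrand, and the last term is a deterministic rough integral, since $m$ is controlled. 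The map $\alpha\mapsto X$ is therefore affine and continuous from $L^2$ into $L^2$, and $\mathcal J^{\pmb\eta,m}$ is a strictly convex quadratic functional, because $R_t$ is uniformly positive and $Q,\bar Q\ge0$.

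For (i), existence and uniqueness of the minimizer follow from coercivity and strict convexity. The first-order condition, computed as a Gâteaux derivative in this mild representation, gives $\alpha^*_t=-R_t^{-1}B_t^\top Y_t$. The process
\begin{equation*}
Y_t=\E\Big[\Gamma_{T,t}^\top\partial\Phi^m(X_T)+\int_t^T\Gamma_{s,t}^\top\partial_x\ell^m_s(X_s,\alpha_s)\ud s\,\Big|\,\mathcal F_t\Big]
\end{equation*}
is the mild form of a rough backward equation. It is then identified, via the martingale representation theorem and rough Itô calculus for $\Gamma$, with the adjoint component of a rough FBSDE
\begin{equation*}
\ud Y_t=-(A_t^\top Y_t+\partial_x\ell)\ud t-(A^1_t)^\top Y_t\ud\pmb\eta_t+Z_t\ud W_t .
\end{equation*}
This yields the SMP.

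For (ii), I would make the ansatz $Y_t=P_tX_t+p_t$. This produces a rough Riccati equation for $P$ in which the $\ud\pmb\eta$ term, $-(P A^1+(A^1)^\top P)\ud\pmb\eta$, is linear. The substitution $P_t=\Gamma^{-\top}_{T,t}\,\tilde P_t\,\Gamma^{-1}_{T,t}$ (up to orientation conventions) removes the rough term and gives a classical Riccati ODE with continuous coefficients depending on $\Gamma$. Its global solvability follows from the standard positivity argument: $\tilde P\ge0$, with an a priori bound obtained from the value function. The affine part $p$ and the mean $\bar m_t=\E X_t$ then solve a linear forward–backward rough system. Imposing the consistency condition \eqref{eq:rConsistency}, namely $m=\bar m$ with $m'=(A^1+C^1)m$, turns the fixed point into a linear two-point boundary value problem for $(m,p)$ driven by $\pmb\eta$. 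I expect this fixed point, posed on controlled paths, to be the main obstacle, since the Gubinelli derivative must also be matched. I would try to decouple it with a second Riccati equation $p=\Pi m+\pi$, which is nonsymmetric because of $S$ and $\bar Q$. Solvability of that equation needs the extra assumption, the rough analogue of the classical monotonicity or small-coupling condition, and is again obtained by the same conjugation trick followed by an a priori estimate. Uniqueness follows from the uniqueness of the SMP solution and the linearity of the system.

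Continuity of $\pmb\eta\mapsto\mu^{\pmb\eta}$ comes from the explicit formulas. Given $m$ and $P$, the equilibrium law is Gaussian conditionally on $\xi$, and its mean and covariance are expressed through $\Gamma,P,\Pi$ and rough integrals. Each of these depends locally Lipschitz-continuously on $\pmb\eta$ in rough path metric, by the stability of linear RDEs and of rough integrals, together with ODE stability for the conjugated Riccati equations, uniformly on bounded sets.

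For (iii), I would replace $\pmb\eta$ by a Brownian rough path $\mathbf B(\omega')$ independent of $W$. The mild formulas depend measurably, indeed continuously, on $\pmb\eta$, so measurable selection is trivial, and the resulting equilibrium solves the MFG in which agents observe the common path up front.
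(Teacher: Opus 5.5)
Your treatment of the single-agent problem follows the paper's route: mild form with a flow, G\^ateaux derivative, and affine decoupling by a symmetric Riccati equation that a single-flow conjugation turns into a classical ODE. The gap is the mean-field fixed point in (ii), which is where the claimed well-posedness actually rests. After imposing \eqref{eq:rConsistency}, the means solve the linear FBRDE \eqref{eq:FBRDE}. Its forward rough coefficient is $A^1+C^1$, while the backward one is only $(A^1)^\top$. The decoupling matrix $\bar p$ in $\bar y=\bar p\bar x$ (your $P+\Pi$) therefore solves the non-symmetric rough Riccati \eqref{eq:rRiccati}. The three tools you name do not give its global solvability.
\begin{itemize}
\item Conjugation now needs two different flows, $\Lambda^{\pmb{\eta}}$ of $A^1+C^1$ and $\mathcal K^{\pmb{\eta}}$ of $A^1$. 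The resulting classical Riccati has quadratic coefficient $\Lambda^{\pmb{\eta}}(0,t)B_tR_t^{-1}B_t^\top\mathcal K^{\pmb{\eta}}(0,t)^\top$ and source $\mathcal K^{\pmb{\eta}}(t,0)^\top(Q_t+\bar Q_t-\bar Q_tS_t)\Lambda^{\pmb{\eta}}(t,0)$. These are in general neither symmetric nor sign-definite, so the positivity a priori bound is lost. The paper remarks that this direct route would need the extra proportionality condition (S5-R).
\item An energy or monotonicity argument on $\langle\bar x,\bar y\rangle$ produces the term $\langle C^1\bar x,\bar y\rangle\,\ud\pmb{\eta}$, which has no sign. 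Lasry--Lions monotonicity is formulated for mean-independent dynamics ($C=C^1=0$), which removes the rough mean-field coupling altogether.
\item A small-coupling condition is measured against constants that grow with $\tnorm{\pmb{\eta}}_\gamma$. It would only give equilibria for $\pmb{\eta}$ in bounded sets, so the It\^o--Lions--Lyons map would not be defined on all of $\mathscr{C}^{0,\gamma}_g$. Part (iii) would then fail, since $\tnorm{\mathbf{W}^\perp}_\gamma$ is unbounded.
\end{itemize}
Uniqueness also does not follow from ``uniqueness of the SMP and linearity'': a linear two-point boundary value problem can have a nontrivial kernel. It comes from a global decoupling field, because $\bar y-\bar p\bar x$ then solves a linear backward equation with zero terminal value.

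The missing idea is assumption (S4-R) and the factorization it permits. Assume $Q_t+\bar Q_t-\bar Q_tS_t\succeq 0$ and $Q+\bar Q-\bar QS\succeq 0$. Assume also that the solution of $e_t=\mathrm{Id}+\int_0^t C_s^\top e_s\,\ud s+\int_0^t (C^1_s)^\top e_s\,\ud\pmb{\eta}_s$ equals $\lambda_t\mathrm{Id}$ with $\lambda_t>0$, e.g.\ $C$ and $C^1$ scalar multiples of the identity, as in the classical LQ theory. Then $\bar p=e\tilde p$, where $\tilde p$ solves the symmetric rough Riccati \eqref{eq:symrRiccati} with coefficients $(A+C,A^1+C^1)$ and weights $R_te_t^{-1}$, $(Q_t+\bar Q_t-\bar Q_tS_t)e_t^{-1}$. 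Because $e$ is scalar it commutes with all coefficients. In the rough product rule, the $C^\top e$ and $(C^1)^\top e$ terms from $\ud e$ cancel the left factors $C^\top$, $(C^1)^\top$ of the symmetric equation, leaving exactly \eqref{eq:rRiccati}. The symmetric equation is globally solvable by positivity (your single-flow conjugation works here) and is locally Lipschitz in $\pmb{\eta}$, which also feeds the stability estimates.

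Two smaller points:
\begin{itemize}
\item Identifying the mild adjoint with a rough BSDE through martingale representation needs $\sup_t\E|Z_t|^k<\infty$ for the rough product rule. The paper obtains this from the decoupling field, $Z_t=\mathcal K^{\pmb{\eta}}(0,t)^\top P_t\mathcal K^{\pmb{\eta}}(0,t)\Sigma_t$, so the decoupling must come before the identification.
\item Your ``conditionally Gaussian'' argument for $\pmb{\eta}\mapsto\mu^{\pmb{\eta}}$ requires deterministic $\Sigma$. The synchronous-coupling bound $\mathcal W_{k,C^\gamma}(\mu^1,\mu^2)^k\le\E\|X^1-X^2\|_\gamma^k$ avoids this. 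You also need $A^1,C^1$ to depend Lipschitz-continuously on $\pmb{\eta}$, as in \eqref{eq:ControlledVectorFieldsContinuity}.
\end{itemize}
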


\medskip
\noindent 
{\bf Ideas of proof.}
The first step in solving our rough LQ MFG is to exploit the affine-linear rough noise structure in \eqref{eq rough_affineI}. Theorem \ref{thm equivalence} shows that the rough term can be absorbed into a multiplicative two-parameter kernel $\mathcal{K}^{\pmb{\eta}}(t,s)$ and a deterministic path $M^{\pmb{\eta}}((m,m'))$, which yields the mild formulation
\begin{equation}\label{eq:auxiliarySVE}
\begin{aligned}
    X_t^{\pmb{\eta},\alpha,m}
    &=\mathcal{K}^{\pmb{\eta}}(t,0)\xi+M^{\pmb{\eta}}\big(m\big)_t\\
    &\quad+\int_0^t\mathcal{K}^{\pmb{\eta}}(t,s)
    \big(A_sX_s^{\pmb{\eta},\alpha,m}+B_s\alpha_s+C_sm_s\big)\ud s
    +\int_0^t\mathcal{K}^{\pmb{\eta}}(t,s)\Sigma_s\ud W_s.
\end{aligned}
\end{equation}
This formulation strikes a balance between a direct rough stochastic analysis of \eqref{eq rough_affineI} and a fully flow-transformed equation, often regarded as non-intrinsic. It allows us to derive the stochastic maximum principle in a Volterra setting, see Theorem \ref{thm Stoch_optimal_control_Volterra}, without rough path considerations at the optimization step. The resulting Volterra FBSDE is shown in Theorem \ref{thm sol_rough_FBSDE} to be equivalent to the corresponding intrinsic rough FBSDE. Combining this rough stochastic maximum principle with the consistency condition \eqref{eq:rConsistency} gives the rough McKean--Vlasov system
\begin{equation}\label{eq 0001}
\left\{
\begin{aligned}
    X_t^{\pmb{\eta}}
    &=\xi+\int_0^t\big(A_s^1X_s^{\pmb{\eta}}+C_s^1\mathbb{E}[X_s^{\pmb{\eta}}]\big)\ud\pmb{\eta}_s\\
    &\quad+\int_0^t\big(A_sX_s^{\pmb{\eta}}-B_sR_s^{-1}B_s^\top Y_s^{\pmb{\eta}}
    +C_s\mathbb{E}[X_s^{\pmb{\eta}}]\big)\ud s
    +\int_0^t\Sigma_s\ud W_s,\\
    Y_t^{\pmb{\eta}}
    &=(Q+\bar Q)X_T^{\pmb{\eta}}-\bar QS\mathbb{E}[X_T^{\pmb{\eta}}]
    +\int_t^T(A_s^1)^\top Y_s^{\pmb{\eta}}\ud\pmb{\eta}_s\\
    &\quad+\int_t^T\big((Q_s+\bar Q_s)X_s^{\pmb{\eta}}
    -\bar Q_sS_s\mathbb{E}[X_s^{\pmb{\eta}}]
    +A_s^\top Y_s^{\pmb{\eta}}\big)\ud s
    -\int_t^T Z_s^{\pmb{\eta}}\ud W_s.
\end{aligned}
\right.
\end{equation}
The precise equivalence between the rough LQ MFG and the rough McKean--Vlasov FBSDE \eqref{eq 0001} is stated in Theorem \ref{thm:VolterraEquivalence}. In order to solve equation \eqref{eq 0001} we employ the fixed point strategy.
By taking expectations in the rough FBSDE and imposing the equilibrium condition \eqref{eq:rConsistency} is shown in Theorem \ref{thm:RMFGfixedPointEquivalence} that this MFG fixed-point problem is reduced to the deterministic forward--backward rough differential equation \eqref{eq:FBRDE}. The latter is solved through a linear decoupling ansatz and new rough Riccati equations. Global non-explosion of these Riccati equations yields the global existence and uniqueness result and hence global well-posedness of the rough LQ MFG, see Theorem \ref{thm:FBRDEwellposedness}. Subsequently, we establish stability of the optimal state, optimal control, value functional, and equilibrium law with respect to the rough common noise and the initial condition. In particular, for the maps
\[
    (\pmb{\eta},\xi)\longmapsto
    \mu^{\pmb{\eta},\xi}
    =\mathcal{L}\big(X_\cdot^{\pmb{\eta},\alpha^*(\pmb{\eta},\xi),\xi}\big),
    \qquad
    (\pmb{\eta},\xi)\longmapsto\alpha^*(\pmb{\eta},\xi),
\]
and the corresponding optimal value, Theorem \ref{thm:StabilityMain} gives local Lipschitz continuity in the rough path metric and the appropriate norm on the initial condition. In particular, the It\^o--Lions--Lyons map $\pmb{\eta}\mapsto\mu^{\pmb{\eta}}$ is locally Lipschitz.

\medskip
\noindent 
{\bf From rough MFG to MFG with pathwise common noise.}
Our rough path approach allows us to study a novel class of conditional mean-field games where the representative agent observes the common noise path at the beginning of the game. 

Specifically, on a product space $\Omega=\Omega'\times\Omega''$, let $W=W(\omega')$ be the idiosyncratic Brownian motion and let $W^\perp=W^\perp(\omega'')$ be an independent common Brownian motion, lifted to a Brownian rough path $\textbf{W}^\perp$. Evaluating the pathwise equilibrium at
\[
    \pmb{\eta}=\textbf{W}^\perp(\omega'')
\]
produces a conditional stochastic MFG with a suitably modified admissibility condition on the controls and a {\sl random} conditional objective function of the form
\begin{equation}\label{eq:rough_costI2}
\begin{aligned}
    \mathcal{J}^{\textbf{W},m}(\alpha)
    &=\E\!\left[\int_0^T \ell_t^m\big(X_t^{\textbf{W},\alpha,m},\alpha_t\big)\ud t
    +\Phi^m\big(X_T^{\textbf{W},\alpha,m}\big) \bigg|\mathcal{F}^{\perp}_T \right].
\end{aligned}
\end{equation}
More precisely, assuming $A^1 , C^1$ are continuous functions of $\pmb{\eta}$ (see also \ref{(S6-R)}), the continuity obtained in Section \ref{sec:Stability} provides the 
necessary measurability properties,
while Theorem \ref{thm 6.7} identifies the pathwise equilibria and equilibria in the conditional stochastic setting. Consequently, there exists a controlled rough path $(\overline{m^*},(\overline{m^*})')$ and an admissible control $\overline{\alpha^*}$ such that   
\begin{multline*}
     \mathbb{E}\left[\int_0^T \ell^{\overline{m^*}}_t\big(\overline{X}^{\overline{\alpha^*}}_t,\overline{\alpha^*}_t\big)\ud t + \Phi^{\overline{m^*}}\big(\overline{X}^{\overline{\alpha^*}}_T\big)\bigg|\mathcal{F}^{\perp}_T\right]\\ = \left(\textrm{inf}_{\alpha\in\mathcal{A}}\mathbb{E}'\!\left[\int_0^T \ell^{m(\pmb{\eta})}_t\big({X}^{\pmb{\eta}}_t,\alpha_t\big)\ud t + \Phi^{m(\pmb{\eta})}\big({X}^{\pmb{\eta}}_T\big)\right]\right)\Bigg|_{\pmb{\eta}=\textbf{W}^{\perp} (\omega'')}
 \end{multline*}
 and such that the equilibrium condition 
\begin{equation*}
 \overline{m^*}_t = \mathbb{E}\Big[ \overline{X}^{\overline{\alpha^*}}_t\Big|\mathcal{F}^{\perp}_T\Big], \quad (\overline{m^*})'_t = ( A_t^1  + C_t^1)|_{\pmb{\eta}=\textbf{W}^{\perp} (\omega'')}  \mathbb{E}\Big[ \overline{X}^{\overline{\alpha^*}}_t\Big|\mathcal{F}^{\perp}_T\Big]
\end{equation*}
holds in terms of conditional means. 
\subsection{Contributions}

The paper contributes to both MFG theory and rough path theory.
\medskip

\noindent

On the MFG side, together with the independent and contemporaneous work \cite{bayraktar2026meanfieldgamesroughcommon}, this is the first work to formulate an MFG with rough-path common noise, realizing the program suggested in \cite{carmona2014master}. The approaches are complementary: \cite{bayraktar2026meanfieldgamesroughcommon}, which appeared shortly after our first version on the arXiv, uses relaxed solutions and compactness methods to obtain existence, whereas we obtain strong global existence, uniqueness, and stability in the linear--quadratic setting. The equilibrium condition is posed on the controlled mean flow $(m,m')$, rather than directly on a measure flow. The consistency relation \eqref{eq:rConsistency} is a new rough equilibrium condition that has no counterpart in standard MFG theory. It provides a direct extension of classical LQ MFG theory without introducing measure-valued controlled rough paths. Global well-posedness of the rough LQ MFG is reduced to global non-explosion for new rough Riccati equations. This gives a rough counterpart of the Riccati mechanism underlying classical LQ control and LQ mean field games. We also construct a well-defined, locally Lipschitz It\^o--Lions--Lyons map
    \[
        \pmb{\eta}\longmapsto\mu^{\pmb{\eta}},
    \]
    providing a first quantitative stability result for MFG equilibria with respect to the common noise and the robustness naturally inherited from rough path theory. Finally, under independent Brownian randomization, the pathwise rough MFG becomes a conditional stochastic MFG. This gives the stochastic meaning of the rough formulation and extends pathwise stochastic control \cite{LS98,LS98b,BM07,AC20,CHT24,FLZ24,horst2025pontryagin} from a single-agent optimization problem to a mean field equilibrium problem.

\medskip

\noindent

On the rough path side, we first  
establish a global well-posedness result for a class of rough McKean--Vlasov FBSDEs. 
These equations arise naturally as the optimality system of the rough MFG and are reduced to a deterministic FBRDE through the mean-flow consistency condition. We use rough FBSDEs to extend the stochastic maximum principle to rough stochastic optimal control. The optimization is first carried out in the Volterra formulation, and the resulting optimality system is then transferred back to the intrinsic rough setting. To solve our MFG we introduce and solve new symmetric and non-symmetric rough Riccati equations, building on related equations from robust Kalman--Bucy filtering \cite{bugini2025rough}. Their global non-explosion is the key to the well-posedness of the FBRDE and the rough LQ MFG. The Volterra-type, or mild, formulation absorbs the $\ud\pmb{\eta}$ term into the two-parameter flow kernel $\mathcal{K}^{\pmb{\eta}}(t,s)$. It balances direct rough stochastic analysis against a fully flow-transformed equation and streamlines the moment conditions that arise in rough It\^o calculus. The same approach should also work for non-linear $b, \Sigma$, under suitable strict convexity assumptions on the Hamiltonian. Our model provides a genuinely hybrid rough--It\^o system, driven by idiosyncratic Brownian noise and rough common noise. The equivalence between the rough and Volterra formulations relies on a rough stochastic product rule. When $\Sigma$ is not invertible the standard conditions for existence of (global, strong) solutions of MFG (in absence of rough drift) via SMP are rather stringent, see e.g.  \cite[Assumption (SMP), p. 276]{carmona2018probabilistic} and \cite[Assumptions (Sufficient SMP in Random Environment), p. 99, (MFG with a Common Noise SMP), p. 210]{carmona2018probabilistic_2}, but very important. Hence, 
by smoothly approximating $\pmb{\eta}$ and absorbing the rough integral in the drift term, one can see that assuming affine dependence on state in the rough integral is naturally aligned with the aforementioned sets of conditions. The affine dependence of the rough coefficient on the state is structurally important: it preserves the convexity of the Hamiltonian, whereas in the non-affine case the uncontrolled sign of ``$\dot\eta$'' generally destroys the global sufficient conditions supplied by the stochastic maximum principle; compare \cite{DiehlFrizGassiat2017}.
\subsection{Structure of the paper}

Section \ref{sec:prelim} collects the notation and the required preliminaries on rough paths, controlled rough paths, and rough stochastic integration. In Section \ref{sec:LQVolterraControl}, we study the auxiliary LQ stochastic control problem with Volterra dynamics \eqref{eq:auxiliarySVE}, derive its stochastic maximum principle, and prove well-posedness of the corresponding Volterra FBSDE. Section \ref{sec 3} connects the Volterra and rough formulations through Duhamel's principle and a rough stochastic product formula. The rough MFG fixed-point problem is then reduced to the deterministic FBRDE \eqref{eq:FBRDE}, which is solved through novel assymetric rough Riccati equations. Section \ref{sec:Stability} establishes local Lipschitz stability of the equilibrium law, optimal control, and value with respect to the rough common noise and initial condition. Section \ref{sec:Randomization} studies independent Brownian randomization and identifies the resulting conditional stochastic MFG. Auxiliary results on linear RDEs, backward rough Riccati equations, and the rough stochastic product rule are collected in Appendix \ref{sec ap.}.

\section{Preliminaries}\label{sec:prelim}
\subsection{Frequently used notation}\label{subsec:FUN} 
For two matrices $A, B$ we write $A\succeq B$ (resp. $A\succ B$) to indicate that $A-B$ is symmetric and non-negative (resp. positive) definite. For an $n-$tensor $A,$ we use $A^\top$ to denote matrix transposes when $n=2$ and (one of the) well-defined pairwise tensor transposes for $n>2.$  By slight abuse of notation, different pairwise tensor transposes will not be distinguished when they can be inferred from the context.

For every pair $n,n'\in\mathbb{N}$ we shall denote by $\mathbb{R}^{n\times n'}$ the space of $n\times n'$-matrices with real entries endowed with its Euclidean norm, \emph{i.e.}, for $A\in\mathbb{R}^{n\times n'}$ $|A|:=\textrm{Tr}[AA^\top]$, where $A^{\top}$ is the transpose of $A$. We shall frequently identify $\mathbb{R}^n$ with $\mathbb{R}^{n \times 1}$.

Throughout this work we fix a complete filtered probability space $(\Omega,\mathcal F,(\mathcal F_t)_{t\in\mathbb{R}_+},\mathbb{P})$ which satisfies the usual conditions, $k\in(0,\infty),T\in\mathbb{R}_+, d,q \in\mathbb{N}$ and a Euclidean space $(V,|\cdot|)$. We use ${L}(V_1, V_2)$ to denote the space of linear maps between Euclidean spaces $V_1, V_2.$ The Borel $\sigma-$algebra on a Polish space $\mathcal{X}$ is denoted by $\mathcal{B}(\mathcal{X}).$ For our analysis we are going to need the following spaces:

    \begin{align*}
\mathbb{L}^k(\mathcal{F}_t;V)&:=\Big\{Z:\Omega\rightarrow V:\mathcal{F}_t-\text{measurable and}\hspace{.2cm}\|Z\|_k:=\mathbb{E}\big[|Z|^k\big]^{\frac{1}{k}}<\infty\Big\},\\
\mathbb{L}^\infty(\mathcal{F}_t;V)&:=\Big\{Z:\Omega\rightarrow V:\mathcal{F}_t-\text{measurable and}\hspace{.2cm}\|Z\|_\infty:=\esssup_\omega |Z(\omega) |<\infty\Big\},\\
    \mathbb{S}^k([0,T];V)&:=\Big\{Z: [0,T]\times\Omega\to V: \text{ prog. measurable and}\hspace{.2cm} \|Z\|_{\mathbb{S}^k}:=\mathbb{E}\Big[\sup_{t\in[0,T]}|Z_t|^k\Big]^{\frac{1}{k}}<\infty\Big\},
\end{align*}
and
\begin{align*}\label{eq:bbHspace} \mathbb{H}^k([0,T];\mathbb{R}^{d\times q}):=\bigg\{Z&: [0,T]\times\Omega\to\R^{d\times q}:\text{ prog. measurable and}\\&\|Z\|_{\mathbb{H}^k}:=\mathbb{E}\bigg[\Big(\int_0^T |Z_t|^2\ud t\Big)^{\frac{k}{2}}\bigg]^{\frac{1}{k}}<\infty\bigg\}.
  \end{align*}

\subsection{Rough path theory} \label{subseec:RPpreliminaries}
Let us recall some basic results on rough paths and rough stochastic integrals. We refer to the textbook \cite{friz2020course} for more details on rough analysis. 

In what follows, $\gamma \in (\frac13, \frac12)$ is a fixed H\"older exponent corresponding to the underlying rough path.  
For a one-parameter function/path $\eta:[0,T] \rightarrow V, s,t\in[0,T]$ we set $\delta \eta_{s,t}:= \eta_t - \eta_s.$ The space of $V$-valued $\gamma$-H\"older continuous functions is denoted $C^\gamma([0,T];V)$. For any two-parameter function $A: \Delta_{[s,t]} \rightarrow V,$ where
\[
    \Delta_{[s,t]} := \{(u,v) \in [s, t]^2 ~ | ~ s \leq u < v \leq t\},
\]    
the difference operator $\delta A_{s,u,t}$ is defined by
$$
    \delta A_{s,u,t}:= A_{s,t}-A_{s,u}- A_{u,t},\ \ \ s \leq u \leq t,
$$
and the $\gamma$-H\"older seminorm is denoted
$$
\lvert A \rvert_{\gamma,[s,t]} := \sup_{(u, v) \in \Delta_{[s, t]}} \frac{\lvert A_{u,v} \rvert}{|v - u|^\gamma}.
$$
The set of $\gamma$-H\"older continuous two-parameter functions is denoted $C^\gamma_2([0,T];V)$. For a random two-parameter $V$-valued function $A: \Omega \times \Delta_{[s,t]} \rightarrow V$ we set 
$$
\lVert A \rVert_{\gamma, k,[s,t]} := \sup_{(u,v) \in \Delta_{[s, t]}} \frac{\lVert A_{u, v} \rVert_k}{\lvert v - u \rvert^\gamma}.
$$ 

The space of all functions for which the above seminorm is finite is denoted $C^\gamma_2([0,T];\mathbb{L}^k)$ and $C^\gamma_2 \mathbb{L}^k$ denotes the subspace of all $\mathcal{F} \otimes \mathcal{B}(\Delta_{[0,T]})/\mathcal{B}(V)$-measurable elements of $C^\gamma_2([0,T];\mathbb{L}^k )$. 

In the following, we usually omit the time interval in our notations and norms if $[s,t]=[0,T]$ and write `$\lesssim_{\theta}$' to indicate that an inequality holds up to a generic constant that depends on a parameter $\theta$.

\begin{rmk}
For every $\gamma\in(0,1]$ and Banach space $V$, the space $C^{\gamma}([0,T];V)$ equipped with the norm $\|\cdot\|_{\gamma}:= \|\cdot\|_{\infty; [0,T]}+|\cdot|_{\gamma,[0,T]}$ is a Banach space.
\end{rmk}

\begin{defn} 

\begin{itemize}
\item[$(1)$] A pair of (deterministic) continuous functions $\pmb{\eta}:=(\eta, \eta^{(2)}):[0,T] \times \Delta_{[0,T]} \rightarrow \R^p \times \R^{p \times p}$ is called a $\gamma$-H\"older continuous {\it rough path}, if the following two conditions are satisfied.
\[
(\text{i})\  |\delta \eta |_{\gamma;[0,T]},\ |\eta^{(2)} |_{2\gamma;[0,T]} < \infty; \ \ (\text{ii})\  \eta^{(2)}_{s,t}- \eta^{(2)}_{s,u}- \eta^{(2)}_{u,t}= \delta \eta_{s,u} \otimes \delta \eta_{u,t},\ \text{ for any $s\leq u \leq t$.}
\]  
The space of $\gamma$-H\"older continuous rough paths is denoted  $\mathscr{C}^{\gamma}([0,T];\R^p)$ and is equipped with the following rough path norm and rough path metric: for any $\pmb{\eta}=(\eta,\eta^{(2)}), \bar{\pmb{\eta}}=(\bar \eta, {\bar \eta}^{(2)} ) \in \mathscr{C}^\gamma([0,T];\R^p),$ 
$$
\tnorm{\pmb{\eta}}_\gamma :=|\eta_0|+ \lvert \delta \eta \rvert_{\gamma;[0,T]} +\lvert \eta^{(2)}\rvert_{2 \gamma;[0,T]}^{\frac{1}{2}} , \ \ \ 
\rho_{\gamma}(\pmb{\eta}, \bar{\pmb{ \eta}}) := |\eta_0-\bar{\eta}_0|+\lvert \delta \eta - \delta \bar \eta \rvert_{\gamma;[0,T]} + \lvert \eta^{(2)} - \bar \eta^{(2)} \rvert_{2\gamma;[0,T]}.
$$

\item[$(2)$] $\mathscr{C}^{0,\gamma}_g([0,T];\R^p)$ is the space of $\gamma$-H\"older geometric rough paths, defined as the completion of canonically lifted smooth paths (i.e. if $\eta$ is smooth, $\pmb{\eta}:=(\eta, \int \eta  d\eta)$ is the canonical lift of $\eta$) under the rough path metric $\rho_\gamma$. Note that $\mathscr{C}^{0,\gamma}_g([0,T];\R^p)$ is Polish and $\mathscr{C}^{0,\gamma}_g([0,T];\R^p) \subset \mathscr{C}^{\gamma}([0,T];\R^p)$ (see e.g. \cite{friz2020course} for further details).

\end{itemize}
\end{defn}

\begin{rmk}
We shall always endow $\mathscr{C}^{0,\gamma}_g([0,T];\R^p)$with the rough path metric $\rho_\gamma$. Moreover, we 
may write $\rho_\gamma(\pmb{\eta})$ instead of $\rho_\gamma(\pmb{\eta},0)$.
\end{rmk}

To introduce stochastic controlled rough paths and to define rough integrals, we introduce for any integrable random two-parameter process $A$ the two-parameter conditional expectations process $\E_\bullet A$ by
$$
(s,t;\omega)\mapsto \E[A_{s,t}|\mathcal{F}_s](\omega) =: \E_s[A_{s,t}](\omega).
$$

\begin{defn}[{\cite[Definition 3.1]{FHL21}}]\label{def:contro-rp} 
Let $\pmb{\eta} \in \mathscr{C}^{\gamma}([0,T];\R^p)$ be a rough path.
\begin{itemize}

\item[$(1)$.] 
A pair \((Z,Z')\) is called a \textit{$\eta$-stochastic controlled rough path} of $k$-integrability and $(\beta,\beta')$-H\"older regularity with $\beta, \beta ' \in (0,\gamma],$ if the following hold: 

\begin{enumerate}  
\item[$(\text{i})$] The process \(Z=(Z_t)_{t \in [0,T]}\) is $V$-valued, progressively measurable and 
\[
      \|Z_0\|_k < \infty 
    \quad \text{and} \quad
    \|\delta Z\|_{\beta,k} := \sup_{0 \leq s < t \leq T} \frac{ \|\delta Z_{s,t} \|_k }{|t-s|^\beta} < \infty.
\]
\item[$(\text{ii})$] The process \(Z'=(Z'_t)_{t \in [0,T]}\) takes values in ${L}(\R^{p},V)$ (throughout this work the codomain is finite-dimensional and thus we will not distinguish operator norms in the notation), it is progressively measurable and
\[
      \|Z'_0\|_k < \infty \quad \text{and} \quad \|\delta Z'\|_{\beta' ,k} := \sup_{0 \leq s < t \leq T} \frac{ \| \delta Z'_{s,t} \|_k }{|t-s|^{ \beta'}} < \infty.
\]
\item[$(\text{iii})$] The two-parameter process 
\[
    R^Z_{s,t} := \delta Z_{s,t} - Z'_s \delta \eta_{s,t}, \quad (s,t)\in\Delta_{[0,T]}
\]
satisfies
\[
    \|\mathbb{E}_\bullet R^Z\|_{\beta+\beta', k } := \sup_{0 \leq s < t \leq T} \frac{\|\mathbb{E}_s R^Z_{s,t} \|_k}{|t-s|^{\beta+\beta'}} < + \infty.
\]
\end{enumerate}
In this case we write $(Z,Z') \in \textbf{D}_{\eta}^{\beta,\beta'} \mathbb{L}^{k}([0,T];V)$ and  $(Z,Z') \in \mathbf{D}_{\eta}^{2\beta} \mathbb{L}^{k }([0,T];V)$ if $\beta = \beta'$. We equip the space $\mathbf{D}_{\eta}^{\beta,\beta'} L^{k}$ with the norms 
\begin{align*}
\llbracket (Z, Z')\rrbracket_{\eta, \beta,\beta', k} &:=  \left \lVert Z_0 \right \rVert_{k}+\lVert \delta Z \rVert_{\beta, k} +   \left \lVert Z'_0 \right \rVert_{k} + \lVert \delta Z' \rVert_{\beta', k} + \lVert \E_\bullet R^Z \rVert_{\beta + \beta', k},\\
\|(Z, Z')\|_{\eta, \beta,\beta', k} &:=  \left \lVert Z_0 \right \rVert_{k}+\lVert \delta Z \rVert_{\beta, k} +   \left \lVert Z'_0 \right \rVert_{k} + \lVert \delta Z' \rVert_{\beta', k} + \lVert R^Z \rVert_{\beta + \beta', k}.
\end{align*}
Moreover, for any $\pmb{\eta}, \bar{\pmb{\eta}} \in \mathscr{C}^\gamma,$ and pairs $(Z,Z') \in \textbf{D}^{\beta, \beta'}_{  \eta} \mathbb{L}^{k },$ $(\bar Z, \bar Z') \in  \textbf{D}^{\beta, \beta'}_{\bar{\eta}} \mathbb{L}^{k }$ we define the distance
    \begin{gather*}
            \llbracket (Z,Z'); (\bar Z, \bar{Z}') \rrbracket_{\eta, \bar{\eta}, \beta, \beta',k} :=   \|\delta (Z - \bar Z)\|_{\beta,k } + \|\delta (Z' - \bar Z')\|_{\beta',k}
            + \|Z_0- \bar Z_0\|_k 
            \\\hspace{5cm}
             + \|Z'_0- \bar Z'_0\|_k 
             + \|E_\bullet R^Z- E_\bullet \bar R^{\bar Z}\|_{\beta+ \beta',k},
    \end{gather*}
    where $$\bar R^{\bar Z}_{s,t}:= \delta \bar Z_{s,t} - \bar Z'_s \delta \bar \eta_{s,t}, \quad s,t \in \Delta_{[0,T]}.$$

\item[$(2)$] For any pair $(Z,Z') \in \mathbf{D}_{\eta}^{\beta,\beta'} \mathbb{L}^{k},$ if $R^{Z} \in C^{\beta+\beta'}_2 \mathbb{L}^k$, we write $(Z,Z') \in \mathscr{D}^{\beta,\beta'}_{\eta} \mathbb{L}^k$, and define for any $(\bar{Z}, \bar{Z}') \in \mathscr{D}_{\bar{\eta}}^{\beta,\beta'} \mathbb{L}^{k}$, the distance
\begin{align*}
    \| (Z,Z'); (\bar Z, \bar{Z}') \|_{\eta, \bar{\eta}, \beta, \beta',k}  & ~ := \|\delta (Z - \bar Z)\|_{\beta,k } + \|\delta (Z' - \bar Z')\|_{\beta',k } \\ & \quad \quad + \|Z_0- \bar Z_0\|_k + \|Z'_0- \bar Z'_0\|_k + \|  R^Z-   \bar R^{\bar Z}\|_{\beta+ \beta',k}.
\end{align*}
\end{itemize}
\end{defn} 

\begin{rmk}
We emphasize that in the case where $(Z,Z')\in \mathbf{D}_{{\eta}}^{\beta,\beta'} \mathbb{L}^{k}$ is deterministic then immediately we get $(Z,Z')\in \mathscr{D}_{\eta}^{\beta,\beta'} \mathbb{L}^{\infty}.$ The space of deterministic controlled rough paths is denoted by $\mathscr{D}_{\eta}^{\beta,\beta'}$, or $\mathscr{D}_{\eta}^{\beta,\beta'}([0,T];V)$ when we need to emphasize the codomain $V.$ Furthermore, in the deterministic setting we abbreviate the notation and we simply write $ \| (\cdot,\cdot) \|_{\eta, \beta, \beta'}, \| (\cdot,\cdot); (\cdot, \cdot) \|_{\eta, \bar{\eta}, \beta, \beta'}$.
\end{rmk}

Since the coefficients $A^1, C^1$ in the state dynamics \eqref{eq rough_affineI} are controlled rough paths, we frequently need to deal with products of (stochastic) controlled rough paths. The following lemma is implicitly used in several places and establishes useful estimates for such products. The proof follows immediately from the definitions.

\begin{lemma}\label{productest}
    Let $(Z,Z') \in \textbf{D}^{\beta,\beta'}_{\eta} \mathbb{L}^{\infty} $ and $(z,z') \in \textbf{D}^{\beta,\beta' }_{\eta} \mathbb{L}^{k}$ for some $k \in [2,\infty]$. Then 
    $$
    (Zz,(Zz)' ):= (Zz, Z'z+Zz') \in \textbf{D}^{\beta,\beta' }_{\eta} \mathbb{L}^{k}.
    $$
    Moreover, if
    $$
        \esssup_{t,\omega}|Z_t| \vee   \|(Z,Z')\|_{{\eta}, \beta,\beta',\infty}   \leq K
        \quad \mbox{and} \quad \|z_0\|_k \vee \|z'_0\|_k \leq K_0
    $$
    then, for some implied constants that depend increasingly on $T$, 
\begin{align*}\label{Fy}
    \|Zz\|_{ \beta,k} &\lesssim K\|z\|_{ \beta,k} + KK_0,\\ 
   \|(Zz)'\|_{ \beta',k} &\lesssim K(\|z\|_{ \beta,k}+\|z'\|_{ \beta',k}   + K_0), \\  
  \|   R^{Zz}\|_{ \beta ,k} &\lesssim K (\|z\|_{ \beta,k}+\|z'\|_{ \beta',k}  +  K_0)(1\vee \|  \eta\|_{\beta}), \\ 
    \| \E_{\bullet} R^{Zz}\|_{ \beta+\beta',k} &\lesssim K (\|z\|_{ \beta,k}+ \| \E_{\bullet} R^z\|_{ \beta+\beta',k}  + K_0),
\end{align*}
where $a \vee b:= \max{(a,b)}$.
\end{lemma}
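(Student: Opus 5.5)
The proof is a direct algebraic decomposition of the increments of the product, followed by term-by-term use of Hölder's inequality, with $Z$ and $Z'$ estimated in $\mathbb{L}^\infty$ and $z,z'$ in $\mathbb{L}^k$. For $(s,t)\in\Delta_{[0,T]}$ we use the identity
\[
\delta(Zz)_{s,t} = \delta Z_{s,t}\, z_t + Z_s\, \delta z_{s,t}.
\]
Since $\|z_t\|_k \le \|z_0\|_k + T^\beta\|\delta z\|_{\beta,k} \le K_0 + T^\beta \|z\|_{\beta,k}$, this gives $\|Zz\|_{\beta,k}\lesssim K\|z\|_{\beta,k}+KK_0$. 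The Gubinelli derivative is handled the same way. We write
\[
\delta(Z'z+Zz')_{s,t} = \delta Z'_{s,t}z_t + Z'_s\delta z_{s,t} + \delta Z_{s,t} z'_t + Z_s \delta z'_{s,t}.
\]
Using the analogous bound $\|z'_t\|_k\le K_0+T^{\beta'}\|\delta z'\|_{\beta',k}$, together with $\beta'\le\beta$ so that $|t-s|^\beta\lesssim_T |t-s|^{\beta'}$, we obtain the second estimate. Measurability and progressive measurability of the products are immediate.

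For the remainder we expand
\[
R^{Zz}_{s,t} = \delta(Zz)_{s,t} - (Z'_s z_s + Z_s z'_s)\delta\eta_{s,t} = R^Z_{s,t}\, z_s + Z_s R^z_{s,t} + \delta Z_{s,t}\,\delta z_{s,t}.
\]
This identity is checked by substituting $\delta Z = Z'\delta\eta + R^Z$ and $\delta z = z'\delta\eta+R^z$ into $\delta Z_{s,t} z_s + Z_s\delta z_{s,t} + \delta Z_{s,t}\delta z_{s,t}$. The non-conditioned bound for $\|R^{Zz}\|_{\beta,k}$ does not need the remainder $R^z$ as such. Instead we write $R^{Zz}_{s,t} = \delta(Zz)_{s,t} - (Zz)'_s\delta\eta_{s,t}$ and bound it using the first two estimates, $\|(Zz)'_s\|_k\lesssim K(K_0+\|z'\|_{\beta',k})$, and $|\delta\eta_{s,t}|\le\|\eta\|_\beta|t-s|^\beta$. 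This produces the factor $(1\vee\|\eta\|_\beta)$.

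The main point, and the only delicate one, is the conditional remainder $\E_\bullet R^{Zz}$. Here we use that $Z_s$ and $z_s$ are $\mathcal{F}_s$-measurable, so
\[
\E_s R^{Zz}_{s,t} = \E_s[R^Z_{s,t}]\,z_s + Z_s\,\E_s[R^z_{s,t}] + \E_s[\delta Z_{s,t}\delta z_{s,t}].
\]
The first term is controlled by $\|\E_\bullet R^Z\|_{\beta+\beta',\infty}\le K$ times $\|z_s\|_k$, and the second by $K\|\E_\bullet R^z\|_{\beta+\beta',k}$. In both we use that conditional expectation contracts $\mathbb{L}^k$ norms. For the cross term, contractivity together with Hölder's inequality gives $\|\delta Z_{s,t}\|_\infty\|\delta z_{s,t}\|_k\le K\|z\|_{\beta,k}|t-s|^{2\beta}$. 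Since $2\beta\ge\beta+\beta'$, this is bounded by $K\|z\|_{\beta,k}|t-s|^{\beta+\beta'}$ up to a $T$-dependent constant. Summing the three terms and dividing by $|t-s|^{\beta+\beta'}$ yields the last estimate. All implied constants are powers of $T$, hence increasing in $T$. Finiteness of each norm then shows $(Zz,(Zz)')\in\mathbf{D}^{\beta,\beta'}_\eta\mathbb{L}^k$.
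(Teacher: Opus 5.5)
Your proposal is correct and is exactly the direct computation from the definitions that the paper has in mind; the paper gives no argument beyond the remark that the lemma ``follows immediately from the definitions''. One small slip: your intermediate bound $\|(Zz)'_s\|_k\lesssim K(K_0+\|z'\|_{\beta',k})$ should also contain $\|z\|_{\beta,k}$, because $\|Z'_s z_s\|_k\le\|Z'_s\|_\infty(K_0+T^\beta\|z\|_{\beta,k})$, but this is harmless since that term already appears on the right-hand side of the target estimate.
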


Having introduced rough paths, we proceed to introduce rough stochastic integrals. 

\begin{prop}[{\cite[Theorem 3.4]{FHL21}}] \label{prop:roughinteg}
Let $p\in\mathbb{N},$ $\pmb{\eta} \in \mathscr{C}^{\gamma}([0,T];\R^p)$ a rough path, with $\gamma\in(\frac{1}{3},\frac{1}{2})$ and
 $0<\beta'\leq \beta\leq\gamma$ such that $\gamma+\beta+\beta'>1$. For any $k \in [2,\infty )$ and any $\pmb{\eta}$-stochastic controlled rough path $(Z,Z') \in \textbf{D}_{\eta}^{\beta,\beta'}\mathbb{L}^k$, the following rough stochastic integral is well-defined:
\begin{equation*}
  \int_0^t Z_s \ud\pmb{\eta}_s :=  \int_0^t (Z_s,Z'_s) \ud\pmb{\eta}_s :=  \lim_{|\Pi| \rightarrow 0} \sum_{[u,v] \in \Pi} ( Z_u \delta \eta_{u,v} + Z'_u \eta^{(2)}_{u,v}), \quad t\in[0,T].
\end{equation*}
Here, $\Pi$ is any partition of $[0,t]$ with mesh $|\Pi|$ and the limit is taken in the sense of convergence in probability. Moreover, for any $(s,t) \in \Delta,$ we have
\begin{align*} 
    &\| \int_s^t Z_r \ud\pmb{\eta}_r - Z_s \delta \eta_{s,t} \|_k \lesssim    (\tnorm{\pmb{\eta}}_\gamma \| \delta Z \|_{\beta,k} + \tnorm{\pmb{\eta}}_\gamma^2 \sup_{r\in [s,t]} \|Z'\|_k )|t-s|^{\gamma+\beta} \\  
    & \qquad \qquad \qquad \qquad \qquad \ \ \  +   ( \tnorm{\pmb{\eta}}_\gamma \|\E_{\bullet} R^Z\|_{\beta+\beta',k} + \tnorm{\pmb{\eta}}_\gamma^2 \| \E_{\bullet} \delta Z'\|_{\beta',k}) |t-s|^{\gamma+\beta+\beta'},\\  
   & \lVert \mathbb{E}_{s}  ( \int_{s}^{t} Z_{r} \ud\pmb{\eta}_r - Z_{s} \delta \eta_{s,t} - Z'_s \eta^{(2)}_{s,t}  ) \rVert_{k} 
    \lesssim   \left(  \tnorm{\pmb{\eta}}_\gamma \|\E_{\bullet} R^{Z} \|_{ {\beta}+ \beta', k} + \tnorm{\pmb{\eta}}_\gamma^2 \| \E_{\bullet} \delta Z' \|_{\beta',k} \right) |t-s|^{\gamma +   \beta + \beta'}.
\end{align*}
In particular, 
$$
    (X,X'):=\Big(\int_0^. Z_s \ud\pmb{\eta}_s, Z\Big) \in \textbf{D}^{\gamma, \beta}_{\eta} \mathbb{L}^k.
$$
\end{prop}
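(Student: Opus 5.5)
The statement to prove is Proposition~\ref{prop:roughinteg}, i.e.\ \cite[Theorem 3.4]{FHL21}. The plan is the stochastic sewing lemma of Lê, applied to the local germ
\[
A_{u,v}:=Z_u\delta\eta_{u,v}+Z'_u\eta^{(2)}_{u,v},\qquad (u,v)\in\Delta_{[0,T]}.
\]
First, $A$ is adapted: $A_{u,v}$ is $\mathcal F_u$-measurable because $\eta$ is deterministic. Using Chen's relation (ii), the increment over $s\le u\le t$ is
\[
\delta A_{s,u,t}=-\big(\delta Z_{s,u}-Z'_s\delta\eta_{s,u}\big)\delta\eta_{u,t}-\delta Z'_{s,u}\eta^{(2)}_{u,t}
=-R^Z_{s,u}\delta\eta_{u,t}-\delta Z'_{s,u}\eta^{(2)}_{u,t}.
\]

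The stochastic sewing lemma needs two bounds on $\delta A$.

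\textbf{$L^k$ bound.} Writing $R^Z_{s,u}=\delta Z_{s,u}-Z'_s\delta\eta_{s,u}$ gives
\[
\|\delta A_{s,u,t}\|_k\lesssim\big(\tnorm{\pmb\eta}_\gamma\|\delta Z\|_{\beta,k}+\tnorm{\pmb\eta}_\gamma^2\sup\|Z'\|_k\big)|t-s|^{\gamma+\beta}.
\]
Since $\beta\le\gamma<1/2$, we have $\gamma+\beta>1/2$, which is exactly what the $L^k$ part of the lemma requires.

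\textbf{Conditional bound.} Both factors $\delta\eta_{u,t}$ and $\eta^{(2)}_{u,t}$ are deterministic, so
\[
\|\mathbb E_s\delta A_{s,u,t}\|_k\le \|\mathbb E_sR^Z_{s,u}\|_k|\delta\eta_{u,t}|+\|\mathbb E_s\delta Z'_{s,u}\|_k|\eta^{(2)}_{u,t}|
\lesssim\big(\tnorm{\pmb\eta}_\gamma\|\mathbb E_\bullet R^Z\|_{\beta+\beta',k}+\tnorm{\pmb\eta}_\gamma^2\|\mathbb E_\bullet\delta Z'\|_{\beta',k}\big)|t-s|^{\gamma+\beta+\beta'},
\]
and the exponent $\gamma+\beta+\beta'$ exceeds $1$ by hypothesis.

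Given these two bounds, the stochastic sewing lemma (with $k\ge2$) delivers three things. It gives a unique adapted process $\mathcal I$ with $\mathcal I_0=0$ such that the Riemann sums $\sum_{[u,v]\in\Pi}A_{u,v}$ converge in $L^k$, hence in probability, to $\mathcal I_t$, independently of the partition. It also gives the estimate
\[
\|\delta\mathcal I_{s,t}-A_{s,t}\|_k\lesssim C_1|t-s|^{\gamma+\beta}+C_2|t-s|^{\gamma+\beta+\beta'},
\]
where $C_1,C_2$ are the constants in the two bounds above. Finally, it gives the conditional estimate
\[
\|\mathbb E_s(\delta\mathcal I_{s,t}-A_{s,t})\|_k\lesssim C_2|t-s|^{\gamma+\beta+\beta'}.
\]

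The first displayed inequality of the proposition follows by subtracting $Z'_s\eta^{(2)}_{s,t}$ only inside the bound. Its size is $\le\sup\|Z'\|_k\tnorm{\pmb\eta}^2_\gamma|t-s|^{2\gamma}$, and $2\gamma\ge\gamma+\beta$, so this is absorbed into the $|t-s|^{\gamma+\beta}$ term. The second inequality is exactly the conditional estimate.

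It remains to check that $(X,X')=(\int_0^\cdot Z\,\ud\pmb\eta,Z)\in\mathbf D^{\gamma,\beta}_\eta\mathbb L^k$.

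\textbf{Increment of $X$.} We have $\|\delta X_{s,t}\|_k\le\sup\|Z\|_k|\delta\eta_{s,t}|+O(|t-s|^{\gamma+\beta})$, so $X$ is $\gamma$-Hölder in $L^k$. Here $\sup\|Z\|_k\le\|Z_0\|_k+T^\beta\|\delta Z\|_{\beta,k}$.

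\textbf{Gubinelli derivative.} $X'=Z$ has $\beta$-Hölder increments by assumption.

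\textbf{Remainder.} We have $R^X_{s,t}=\delta X_{s,t}-Z_s\delta\eta_{s,t}$. Then
\[
\|\mathbb E_sR^X_{s,t}\|_k\le\|Z'_s\eta^{(2)}_{s,t}\|_k+C_2|t-s|^{\gamma+\beta+\beta'}\lesssim|t-s|^{2\gamma}\le|t-s|^{\gamma+\beta},
\]
which is the regularity required for the pair $(\gamma,\beta)$.

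The main obstacle is making sure the version of stochastic sewing used only demands the conditional bound for $\mathbb E_s\delta A$ and the plain $L^k$ bound with exponent above $1/2$. The latter is what allows $\beta'$ to be small. This is Lê's lemma, and I would quote it rather than reprove it; measurability of $\mathcal I$ is then standard.
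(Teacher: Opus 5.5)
Your proposal is correct and matches the argument behind the result: the paper gives no proof of its own and defers to \cite[Theorem 3.4]{FHL21}, which is proved by applying Lê's stochastic sewing lemma to the germ $Z_u\delta\eta_{u,v}+Z'_u\eta^{(2)}_{u,v}$, using $\delta A_{s,u,t}=-R^Z_{s,u}\delta\eta_{u,t}-\delta Z'_{s,u}\eta^{(2)}_{u,t}$ exactly as you do. One small fix: $\gamma+\beta>1/2$ does not follow from $\beta\le\gamma<1/2$; it follows from $\gamma+\beta>1-\beta'\ge 1-\gamma>1/2$.
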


Due to our hybrid rough stochastic setting, we interpret the state equation \eqref{eq rough_affineI} as a Rough Stochastic Differential Equation (RSDE) per \cite{FHL21}. We now define what we mean by solutions of \eqref{eq rough_affineI}.

\begin{defn}\label{def:solution}
Let $p\in\mathbb{N}, k\in[2,\infty),$ $\gamma\in(\frac{1}{3},\frac{1}{2})$ and 
 $0<\beta'\leq \beta\leq\gamma$ such that $\gamma+\beta+\beta'>1$,
and $\pmb{\eta} \in \mathscr{C}^{0,\gamma}_g([0,T];\mathbb{R}^p)$. Additionally, let $A,B, C:[0,T]\rightarrow \mathbb{R}^{d\times d}$ bounded and Borel measurable on \([0,T]\), $\big({A}^{1},(A^1)'\big), \big({C}^{1},(C^1)'\big)\in \mathscr{D}^{\beta,\beta'}_{\eta}\big([0,T];L(\mathbb{R}^d,\mathbb{R}^{d\times p})\big), (m,m')\in \mathscr{D}^{\beta,\beta'}_{\eta}\big([0,T];\mathbb{R}^d\big)$ deterministic controlled rough paths with respect to $\eta$, $\xi\in \mathbb{L}^k(\mathcal{F}_0,\mathbb{R}^d)$, and $\alpha(\omega,t), \Sigma(\omega,t)$ progressively measurable processes such that $\sup_{t\in[0,T]}\mathbb{E}[|\Sigma_t|^k] + \sup_{t\in[0,T]}\mathbb{E}[|\alpha_t|^k]<\infty$.
 We shall call a continuous adapted process $X$ an $k$-integrable solution to the RSDE \eqref{eq rough_affineI} in the space $\textbf{D}_{\eta}^{\beta,\beta'}\mathbb{L}^k([0,T]; \mathbb{R}^{d})$, if the following holds:

\begin{enumerate}
\item[(i)] The integral $\int_{0}^t\big(A_s X^{\pmb{\eta},\alpha,(m,m'),\xi}_s + B_s \alpha_s + C_s m_s \big)\ud s$ is finite a.s..

\item[(ii)] We have
\begin{gather*}
\big(A^1 {X}^{\pmb{\eta},\alpha,(m,m'),\xi} + C^1 m, (A^1)' {X}^{\pmb{\eta},\alpha,(m,m'),\xi} + A^1(A^1 {X}^{\pmb{\eta},\alpha,(m,m'),\xi} + C^1 m)+ (C^1)' m + C^1 m'\big)\\\in  \textbf{D}^{\beta,\beta'}_{\eta}\mathbb{L}^k\big([0,T];\mathbb{R}^d\big).
\end{gather*}
\item[(iii)]  The following integral equation holds $\mathbb{P}-$a.s.~for all $t \in [0,T]$:
\begin{equation*}
\begin{aligned}
    X^{\pmb{\eta},\alpha,(m,m'),\xi}_t&=\xi+\int_{0}^t\big(A_s X^{\pmb{\eta},\alpha,(m,m'),\xi}_s + B_s \alpha_s + C_s m_s \big)\ud s +\int_0^t \Sigma_s\ud W_s\\&+\int_{0}^t \big(A^1_s X^{\pmb{\eta},\alpha,(m,m'),\xi}_s + C^1_s m_s\big)\ud\pmb{\eta}_s.
\end{aligned}
\end{equation*}
\end{enumerate}
\end{defn}

The next result is from \cite[Thm 3.11]{bugini2024malliavin} in case $\beta'=\beta$, and \cite[Thm 3.5]{horst2025pontryagin} otherwise. It establishes well-posedness of \eqref{eq rough_affineI}.
\begin{thm}\label{thm sol_rough_affine}
Let $p\in\mathbb{N},$ $\gamma\in(\frac{1}{3},\frac{1}{2})$ and 
 $0<\beta'\leq \beta\leq\gamma$ such that $\gamma+\beta+\beta'>1$,
and $\pmb{\eta} \in \mathscr{C}^{0,\gamma}_g([0,T];\mathbb{R}^p)$. Additionally, let $A,B, C:[0,T]\rightarrow \mathbb{R}^{d\times d}$ bounded and Borel measurable on \([0,T]\), $\big({A}^{1},(A^1)'\big), \big({C}^{1},(C^1)'\big)\in \mathscr{D}^{\beta,\beta'}_{\eta}\big([0,T];L(\mathbb{R}^d,\mathbb{R}^{d\times p})\big), (m,m')\in \mathscr{D}^{\beta,\beta'}_{\eta}\big([0,T];\mathbb{R}^d\big)$ deterministic controlled rough paths with respect to ${\eta}$, $\xi\in \bigcap_{k=2}^{\infty}\mathbb{L}^k(\mathcal{F}_0,\mathbb{R}^d)$, and $\alpha(\omega,t), \Sigma(\omega,t)$ progressively measurable processes such that $\sup_{t\in[0,T]}\mathbb{E}[|\Sigma_t|^k] + \sup_{t\in[0,T]}\mathbb{E}[|\alpha_t|^k]<\infty$ for every $k\in[2,\infty)$. Then, there exists a unique solution $X^{\pmb{\eta},\alpha,(m,m'),\xi}$ of
\begin{equation*}
\begin{aligned}
    X^{\pmb{\eta},\alpha,(m,m'),\xi}_t&=\xi+\int_{0}^t\big(A_s X^{\pmb{\eta},\alpha,(m,m'),\xi}_s + B_s \alpha_s + C_s m_s \big)\ud s +\int_0^t \Sigma_s\ud W_s\\&+\int_{0}^t \big(A^1_s X^{\pmb{\eta},\alpha,(m,m'),\xi}_s + C^1_s m_s\big)\ud\pmb{\eta}_s
\end{aligned}
\end{equation*}
with $(X^{\pmb{\eta},\alpha,(m,m'),\xi},A^1X^{\pmb{\eta},\alpha,(m,m'),\xi}+C^1m)\in \bigcap_{k =2}^\infty\textbf{D}^{\gamma,\beta}_{\eta}\mathbb{L}^k\big([0,T];\mathbb{R}^d\big)$.
\end{thm}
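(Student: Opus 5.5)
Since the statement is attributed to \cite[Thm 3.11]{bugini2024malliavin} and \cite[Thm 3.5]{horst2025pontryagin}, the plan is to verify that our affine equation fits their general linear RSDE framework and to give a direct fixed-point argument as a check. We rewrite the equation as a linear RSDE with drift $b(s,x)=A_sx+B_s\alpha_s+C_sm_s$, diffusion $\Sigma_s$ (independent of $x$), and rough coefficient $f(s,x)=A^1_sx+C^1_sm_s$. The inhomogeneity $C^1m$ is a deterministic controlled rough path: by Lemma \ref{productest} with $Z=C^1$, $z=m$, it lies in $\mathscr{D}^{\beta,\beta'}_\eta$ with Gubinelli derivative $(C^1)'m+C^1m'$. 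The forcing terms $\int B\alpha\,\ud s+\int C m\,\ud s+\int\Sigma\,\ud W$ belong to $\bigcap_k\textbf{D}^{1/2,\cdot}_\eta\mathbb{L}^k$ with zero Gubinelli derivative, using BDG and the moment bounds on $\alpha,\Sigma$. The hypotheses of the cited theorems (linear vector fields with controlled-rough-path coefficients, $\gamma+\beta+\beta'>1$, geometric $\pmb{\eta}$) are then met.

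For the direct argument, on a short interval $[s_0,s_0+h]$ we define $\Gamma$ on pairs $(X,X')\in\textbf{D}^{\gamma,\beta}_\eta\mathbb{L}^k$ with $X'=A^1X+C^1m$ by
\[
\Gamma(X)_t=X_{s_0}+\int_{s_0}^t(A_sX_s+B_s\alpha_s+C_sm_s)\ud s+\int_{s_0}^t\Sigma_s\ud W_s+\int_{s_0}^t(A^1_sX_s+C^1_sm_s)\ud\pmb{\eta}_s .
\]
The rough integral is well-defined by Proposition \ref{prop:roughinteg}, once Lemma \ref{productest} places $(A^1X+C^1m,(A^1)'X+A^1X'+(C^1m)')$ in $\textbf{D}^{\beta,\beta'}_\eta\mathbb{L}^k$. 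Because the map is affine, the difference of two images involves only the homogeneous part $\int A^1(X-\bar X)\ud\pmb{\eta}+\int A(X-\bar X)\ud s$. The estimates of Proposition \ref{prop:roughinteg} give factors $h^{\gamma-\beta}$, $h^{\gamma+\beta+\beta'-1}$-type gains, or $h^{1-\gamma}$ times norms of the difference. Hence $\Gamma$ is a contraction for small $h$ depending only on $\tnorm{\pmb{\eta}}_\gamma$ and the norms of $A,A^1$, not on the initial datum. This linearity gives a uniform step size, so we can patch the solution over $[0,T]$ with finitely many steps. Uniqueness in the stated class follows from the same contraction estimate applied to the difference of two solutions.

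The main obstacle is that, for stochastic controlled paths, only the conditional remainder $\E_\bullet R^X$ is controlled, while $\delta X'$ carries only $\beta'$-regularity. Closing the estimate on the $\|\E_\bullet R\|_{\beta+\beta'}$ component requires the second (conditional) bound of Proposition \ref{prop:roughinteg}, together with the fact that the Itô integral has zero conditional mean increment and hence contributes nothing to $\E_sR_{s,t}$. In the case $\beta'<\beta$, one must also check that the remainder exponent $\gamma+\beta$ is compatible with the $\textbf{D}^{\gamma,\beta}$ target. This is exactly the technical content of \cite[Thm 3.5]{horst2025pontryagin}, which we would invoke for it.

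Finally, integrability for every $k$ follows by running the argument for each $k\ge2$: the fixed points coincide, by uniqueness at the smallest $k$, because $\xi$, $\alpha$ and $\Sigma$ have all moments.
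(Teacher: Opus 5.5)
Your proposal follows the paper's route. The paper gives no argument of its own: it invokes \cite[Thm 3.11]{bugini2024malliavin} for $\beta'=\beta$ and \cite[Thm 3.5]{horst2025pontryagin} for $\beta'<\beta$, and your check that the inhomogeneity $C^1m$ and the forcing $\int(B\alpha+Cm)\,\ud s+\int\Sigma\,\ud W$ fit those linear RSDE frameworks is exactly what that citation presupposes.

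Your contraction sketch is not an independent check, though, and the argument rests on the cited results. The set of pairs with $X'=A^1X+C^1m$ is not preserved by the Picard map, so you must iterate on $(X,X')$ as independent unknowns. Moreover, when $\beta=\gamma$ the factor $h^{\gamma-\beta}$ on the derivative component $\|\delta(A^1(X-\bar X))\|_{\beta,k}$ gives no gain, because $\|\delta(X-\bar X)\|_{\gamma,k}$ cannot be traded for the derivative and conditional-remainder norms. The contraction therefore only closes with a weighted norm or a two-step iterate.
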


\section{A linear-quadratric  Volterra stochastic control problem }\label{sec:LQVolterraControl}
This section is devoted to the study of an LQ stochastic control problem with stochastic Volterra state dynamics. The Volterra kernel $\mathcal{K}$ is regular and chosen to satisfy a certain flow property which appears naturally in the rough LQ MFG setting of Section \ref{sec 3}; see also Remark \ref{rem:Kernelchoice} below. Throughout this section, we treat the flow $m$ as a parameter i.e. a fixed continuous path.
\subsection{Assumptions, state dynamics and cost functional}\label{subsection: SVEcontrolIntro}
From this point on (and throughout the rest of the paper), we assume that \((\Omega,\mathcal F,(\mathcal F_t)_{t\in\mathbb{R}_+},\mathbb{P})\) is the usual augmentation of a filtration generated by a $q-$dimensional Brownian motion $W$ and a $\sigma$-algebra $\mathcal{F}_0^0$ independent of $W$\footnote{This assumption is made to avoid technicalities with martingale representation arguments that are used in the derivation of the Volterra FBSDEs.}.

For $\kappa\in\mathbb{N},$ 
$\widehat{\mathcal A}:=\mathbb{H}^2([0,T];\mathbb{R}^\kappa)$ 
denotes the admissible control space, equipped with the norm $\|\cdot\|_{\mathbb{H}^2}.$
For every $\alpha\in\widehat{\mathcal{A}}, m\in C([0,T];\mathbb{R}^d)$ and $\xi\in\mathbb{L}^2(\mathcal{F}_0;\mathbb{R}^d)$, a representative agent controls a process $X^{\alpha,m,\xi}:\Omega\times[0,T] \rightarrow \R^d$ evolving as
\begin{align}\label{eq:rough_state_2}
   {X}^{\alpha,m,\xi}_t = \mathcal{K}(t,0)\xi+M(m)_t + \int_0^t \mathcal{K}(t,s)\big(A_s {X}^{\alpha,m,\xi}_s + B_s \alpha_s + C_s m_s\big)\,\ud s
   + \int_0^t \mathcal{K}(t,s)\Sigma_s\ud W_s.
\end{align}

For a fixed path $m\in C([0,T];\mathbb{R}^d)$ and $\xi\in\mathbb{L}^2(\mathcal{F}_0;\mathbb{R}^d)$, the agent seeks to minimize the quadratic cost functional 
\begin{align}\label{eq:rough_cost}
 \mathcal{J}^{m,\xi}(\alpha) = \E\!\left[\int_0^T \ell^m_t(X_t^{\alpha,m,\xi},\alpha_t)\ud t + \Phi^m(X_T^{\alpha,m,\xi})\right],
\end{align}
over all admissible controls $\alpha\in\widehat{\mathcal{A}}$ with $\ell^m, \Phi^m$ as in \eqref{eq:rough_costI}.
Notice that this cost functional coincides with the original functional \eqref{eq:rough_costI} with the only difference that the latter also depends on an underlying rough path via the rough dynamics.

From now on and for the rest of this section, we will work under the following assumptions:
\begin{enumerate}[label=(S\arabic*), ref=S\arabic*]
\item\label{S1} We are given $\mathcal{K}:[0,T]\times[0,T]\rightarrow\mathbb{R}^{d\times d}$ with the following properties:
\begin{enumerate}
    \item[(i)] For every $t\in[0,T]$ we have $\mathcal{K}(t,t)=\text{Id}_{d\times d}$.
    
    \item[(ii)] For every $s,u,t\in[0,T]$ we have $\mathcal{K}(t,s)=\mathcal{K}(t,u)\mathcal{K}(u,s)$.

    \item[(iii)] The process $\mathcal{K}(t,0):[0,T]\rightarrow \mathbb{R}^{d\times d}$ is continuous.
\end{enumerate}

\item\label{S2} Next, let $A,B, C:[0,T]\rightarrow \mathbb{R}^{d\times d}$ continuous on \([0,T]\), and $M:C([0,T];\mathbb{R}^d)\rightarrow C([0,T];\mathbb{R}^d)$. Furthermore, let us also fix a $\Sigma\in \mathbb{H}^2([0,T];\mathbb{R}^{d\times q})$.

\item\label{S3} Finally, let $ Q,\bar{Q}\in\mathbb{R}^{d\times d}$ with $Q, \bar Q\succeq 0$, and $S_t,Q_t,\bar Q_t,:[0,T]\rightarrow \mathbb{R}^{d\times d}, R_t:[0,T]\rightarrow \mathbb{R}^{\kappa\times \kappa},$ continuous on \([0,T]\), with
\(R_t\succ \lambda \text{Id}_{d\times d}\), for some $\lambda >0$, and \(Q_t,\bar Q_t\succeq 0\).
\end{enumerate}

\begin{rmk}\label{rmk 3.1}
    From (\ref{S1})(i),(ii) it follows that $\mathcal{K}(t,0)^{-1}=\mathcal{K}(0,t)$, for every $t\in[0,T]$.
\end{rmk}

\begin{rmk}\label{rmk 3.2}
We define $\widehat{X}^{\alpha,m,\xi}_t:=  \mathcal{K}(0,t)\Big({X}^{\alpha,m,\xi}_t-M(m)_t\Big), t\in[0,T]$. Then equation \eqref{eq:rough_state_2} becomes equivalent to 
\begin{equation*}
    \widehat{X}^{\alpha,m,\xi}_t = \xi + \int_0^t \mathcal{K}(0,s)\big(A_s \mathcal{K}(s,0)\widehat{X}^{\alpha,m,\xi}_s + B_s \alpha_s + A_s M(m)_s+ C_s m_s\big)\,\ud s
   + \int_0^t \mathcal{K}(0,s)\Sigma_s\ud W_s.
\end{equation*}
The above is a classical forward affine SDE so we know that it has a unique (global) solution $\widehat{X}^{\alpha,m,\xi}\in  \mathbb{S}^2([0,T];\mathbb{R}^d)$, see for example \cite[Theorem 4.21]{carmona2018probabilistic} for an even stronger result. From this we deduce that \eqref{eq:rough_state_2} has a unique solution ${X}^{\alpha,m,\xi}=\mathcal{K}(t,0)(\widehat{X}^{\alpha,m,\xi}+M(m)) \in  \mathbb{S}^2([0,T];\mathbb{R}^d)$, for every $\alpha\in\widehat{\mathcal{A}}, m\in C([0,T];\mathbb{R}^d)$ and $\xi\in\mathbb{L}^2(\mathcal{F}_0;\mathbb{R}^d)$.
\end{rmk}

\begin{rmk} Linear-quadratic stochastic optimal control problems for more general classes of stochastic Volterra processes (but without a mean field component) have been considered e.g. in \cite{pham2009continuous}. The approach followed therein involves so called Markovian lifts of Volterra processes to infinite-dimensional spaces; such techniques arise in the setting of singular (matrix-valued) kernels (see e.g. \cite{gasteratos2025kolmogorov} and references therein) and allow access to classical Markovian tools. In contrast, the kernels $\mathcal{K}$ appearing in our Volterra formulation of the rough MFG are quite well-behaved. Indeed, the flow property (\ref{S1}) guarantees that $\mathcal{K}$ is finite on the diagonal and $\mathcal{K}(0,t)$ is an invertible matrix for each $t.$ This, in turn, allows us to solve the Volterra LQ problem by transforming the state dynamics into a classical (semimartingale-type) form and without resorting to infinite-dimensional arguments. 
\end{rmk}

\subsection{The stochastic maximum principle}
Let us introduce the following coupled forward-backward Volterra system of differential equations
\begin{equation}\label{stoch_max_principle Volterra FBSDE}
 \left\{\begin{aligned}
{X}^{m,\xi}_t & ~ = \mathcal{K}(t,0)\xi+M(m)_t \\&\quad\quad+ \int_0^t \mathcal{K}(t,s)\big(A_s {X}^{m,\xi}_s - B_s R_s^{-1} B_s^\top Y^{m,\xi}_s + C_s m_s\big)\,\ud s
   + \int_0^t \mathcal{K}(t,s)\Sigma_s\ud W_s,
   \\
Y^{m,\xi}_t
 & ~ = \mathcal{K}(T,t)^\top\big((Q+\bar Q) X_T^{m,\xi} - \bar Q Sm_T\big)  
 \\
    &\quad\quad+\int_t^T \mathcal{K}(s,t)^\top\Big((Q_s+\bar Q_s) X_s^{m,\xi} - \bar Q_s S_s m_s+A_s^{\top} Y^{m,\xi}_s\Big) \ud s
    \\
   & \quad \quad -\int_t^T \mathcal{K}(s,t)^\top Z^{m,\xi}_s\ud W_s.
\end{aligned}\right.
\end{equation}

The main result  of this subsection, a stochastic maximum principle expressed by \eqref{stoch_max_principle Volterra FBSDE}, is given in Theorem \ref{thm Stoch_optimal_control_Volterra} below. Although the approach is classical we decided to include its proof for completeness and more importantly to showcase exactly how the Volterra formulation allows us to bypass technical difficulties, as it is explained in more detail in Remark \ref{rmk> 3.8} below.

\begin{thm}\label{thm Stoch_optimal_control_Volterra}
    Let (\ref{S1}), (\ref{S2}), (\ref{S3}) hold true and assume the Volterra FBSDE \eqref{stoch_max_principle Volterra FBSDE} has a unique solution $(X^{m,\xi},Y^{m,\xi},Z^{m,\xi}) \in \mathbb{S}^2([0,T];\mathbb{R}^d)\times \mathbb{S}^2([0,T];\mathbb{R}^d)\times \mathbb{H}^2([0,T];\mathbb{R}^{d\times q})$. Then, for every fixed $m\in C([0,T];\mathbb{R}^d)$ and $\xi\in \mathbb{L}^2(\mathcal{F}_0;\mathbb{R}^d)$, the linear-quadratic stochastic optimal control problem described by \eqref{eq:rough_state_2}, \eqref{eq:rough_cost} has a unique solution $(\alpha^*,X^{\alpha^*,m,\xi})$, where $X^{\alpha^*,m,\xi}=X^{m,\xi}$ and the optimal control is explicitly given by the formula
    \begin{equation*}
        \alpha_t^* = - R_t^{-1} B_t^\top Y^{m,\xi}_t,
    \end{equation*}
where $Y^{m,\xi}$ is the solution of the backward component of \eqref{stoch_max_principle Volterra FBSDE}.
\end{thm}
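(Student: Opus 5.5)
We prove sufficiency directly by a convexity/duality argument. Uniqueness then comes from strict convexity of $\mathcal{J}^{m,\xi}$ in $\alpha$. Fix $m$, $\xi$ and let $(X,Y,Z):=(X^{m,\xi},Y^{m,\xi},Z^{m,\xi})$ be the solution of \eqref{stoch_max_principle Volterra FBSDE}. Set $\alpha^*:=-R^{-1}B^\top Y\in\widehat{\mathcal A}$, which is admissible because $Y\in\mathbb{S}^2$ and $R^{-1},B$ are bounded. By the uniqueness in Remark \ref{rmk 3.2}, the forward component gives $X=X^{\alpha^*,m,\xi}$.

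The first step is to transform the Volterra system into a classical forward--backward SDE. Following Remark \ref{rmk 3.2}, for the forward part we set $\widehat X_t:=\mathcal{K}(0,t)(X_t-M(m)_t)$. For the backward part we set $\widehat Y_t:=\mathcal{K}(t,0)^\top Y_t$ and $\widehat Z_t:=\mathcal{K}(t,0)^\top Z_t$. Using the flow property (\ref{S1})(ii), $\mathcal{K}(s,t)^\top=\mathcal{K}(0,t)^\top\mathcal{K}(s,0)^\top$, and multiplying the $Y$-equation by $\mathcal{K}(t,0)^\top$ cancels the $t$-dependence inside the integrals. This yields
\[
\widehat Y_t=\mathcal{K}(T,0)^\top G_T+\int_t^T\mathcal{K}(s,0)^\top F_s\,\ud s-\int_t^T\mathcal{K}(s,0)^\top Z_s\,\ud W_s ,
\]
with $G_T=(Q+\bar Q)X_T-\bar QSm_T$ and $F_s=(Q_s+\bar Q_s)X_s-\bar Q_sS_sm_s+A_s^\top Y_s$. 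This is a standard BSDE in semimartingale form. The role of the Volterra formulation is exactly that no rough calculus is needed here.

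Next, take any $\alpha\in\widehat{\mathcal A}$ and write $\Delta X:=X^{\alpha,m,\xi}-X$ and $\Delta\alpha:=\alpha-\alpha^*$. The terms $\xi$, $M(m)$, $Cm$ and $\Sigma$ cancel, so
\[
\Delta X_t=\int_0^t\mathcal{K}(t,s)\big(A_s\Delta X_s+B_s\Delta\alpha_s\big)\ud s .
\]
Hence $\Delta\widehat X_t:=\mathcal{K}(0,t)\Delta X_t$ has finite variation, with $\ud\Delta\widehat X_t=\mathcal{K}(0,t)(A_t\Delta X_t+B_t\Delta\alpha_t)\ud t$. Since $\Delta X_t^\top Y_t=\Delta\widehat X_t^\top\widehat Y_t$, we apply It\^o's product rule to $\Delta\widehat X^\top\widehat Y$ and take expectations. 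The stochastic integral is a true martingale by the $\mathbb{S}^2\times\mathbb{H}^2$ bounds, after a localization argument if needed. This gives the duality identity
\[
\E\big[\Delta X_T^\top G_T\big]=\E\int_0^T\Big(\Delta X_t^\top A_t^\top Y_t+\Delta\alpha_t^\top B_t^\top Y_t-\Delta X_t^\top F_t\Big)\ud t .
\]
The $A$-terms cancel, leaving $\E[\Delta X_T^\top G_T]+\E\int_0^T\Delta X_t^\top\big((Q_t+\bar Q_t)X_t-\bar Q_tS_tm_t\big)\ud t=\E\int_0^T\Delta\alpha_t^\top B_t^\top Y_t\,\ud t$.

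Finally, expand the quadratic cost. We have $\mathcal{J}(\alpha)-\mathcal{J}(\alpha^*)=L+N$, where $L$ is the first-order part and $N$ the second-order part. The first-order part is
\[
L=\E\Big[\Delta X_T^\top G_T+\int_0^T\Big(\Delta X_t^\top\big((Q_t+\bar Q_t)X_t-\bar Q_tS_tm_t\big)+\Delta\alpha_t^\top R_t\alpha^*_t\Big)\ud t\Big].
\]
The second-order part is
\[
N=\tfrac12\E\Big[\Delta X_T^\top(Q+\bar Q)\Delta X_T+\int_0^T\big(\Delta X_t^\top(Q_t+\bar Q_t)\Delta X_t+\Delta\alpha_t^\top R_t\Delta\alpha_t\big)\ud t\Big].
\]
By the duality identity, $L=\E\int_0^T\Delta\alpha_t^\top(B_t^\top Y_t+R_t\alpha^*_t)\ud t=0$ by the choice of $\alpha^*$. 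By (\ref{S3}), $N\ge\frac{\lambda}{2}\|\Delta\alpha\|_{\mathbb{H}^2}^2$. Hence $\mathcal{J}(\alpha)\ge\mathcal{J}(\alpha^*)+\frac\lambda2\|\alpha-\alpha^*\|^2_{\mathbb{H}^2}$, which gives optimality and uniqueness of $\alpha^*$ in $\widehat{\mathcal A}$.

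The main obstacle is the first step: verifying rigorously that the Volterra backward equation is equivalent to the semimartingale BSDE for $\widehat Y$ via the flow property, and justifying the product rule and martingale property with only continuity of $\mathcal{K}(\cdot,0)$. For the latter I would use invertibility (Remark \ref{rmk 3.1}) and continuity on the compact interval $[0,T]$ to get uniform bounds on $\mathcal{K}(t,0)$ and $\mathcal{K}(0,t)$. These bounds keep $\widehat Y\in\mathbb{S}^2$ and $\mathcal{K}(\cdot,0)^\top Z\in\mathbb{H}^2$.
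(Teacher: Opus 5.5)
Your proposal is correct and is essentially the paper's argument: the paper also passes to $\mathcal{K}(0,t)(\delta X)^h_t$ and $\mathcal{K}(t,0)^\top Y_t$, applies the semimartingale product rule to obtain the same duality identity, and concludes with the quadratic lower bound of Lemma~\ref{lemma 03.6}. The only difference is that the paper first computes the G\^ateaux derivative at a general $\alpha$ and derives the first-order condition as a necessary condition, which your direct sufficiency argument rightly shows is not needed for the statement as posed. Your constant $\lambda/2$ and your martingale justification via $\Delta X\in\mathbb{S}^2$, $Z\in\mathbb{H}^2$ are, if anything, slightly more careful than the paper's: Lemma~\ref{lemma 03.6} claims $\lambda$ although its quadratic remainder carries a factor $\tfrac12$.
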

This result can be viewed as a generalization of the standard SMP in the LQ setting, see for example \cite[eq. 3.49, eq. 3.50, p. 181]{carmona2018probabilistic}.
\begin{proof}
\noindent \textbf{Step 1. (Gateaux differentiability of cost function)}
Let $\alpha,h\in\widehat{\mathcal{A}}$, as a first step we are going to show that $\mathcal{J}^{m,\xi}$ is Gâteaux differentiable in the direction of $h$ at point $a$ denoted by $(\delta \mathcal{J})^{m,\xi,h}(\alpha)$, and compute its formula.
To this end,  let $X,H\in \mathbb{S}^2([0,T];\mathbb{R}^d)$, 
from (\ref{S3}) we have
\begin{equation*}
\begin{aligned}
    (X_t+H_t)^\top Q_t(X_t+H_t) - X_t^\top Q_tX_t\
    &= X_t^\top Q_t H_t+H_t^\top Q_tX_t+
    H_t^\top Q_t H_t\\
    &=2  H_t^\top Q_t X_t+ H_t^\top Q_t H_t
    \end{aligned}
\end{equation*}
and similarly
\begin{equation*}
\begin{aligned}
    (\alpha_t+h_t)R_t(\alpha_t+h_t)- \alpha_tR_t\alpha_t &= 2  h_t^\top R_t \alpha_t+ h_t^\top R_t h_t,\\
    (X_t+H_t-S_t m_t)\bar{Q}_t(X_t+H_t-S_t m_t)&-(X_t-S_t m_t)^\top\bar{Q}_t(X_t-S_t m_t)\\
    &=2H^\top\bar{Q}_t (X_t-S_t m_t) + H_t^\top \bar{Q}_t H_t.
    \end{aligned}
\end{equation*}
Hence, with the same computations for $\Phi^m$, we get
\begin{equation*}
\begin{aligned}
    &\ell^m_t(X+H,\alpha+h)-\ell^m_t(X,\alpha)\\
    &=  H_t^\top \big[(Q_t+\bar{Q}_t)X_t-\bar{Q}_t(S_t m_t) \big] + H_t^\top \Big(\frac{Q_t+\bar{Q}_t}{2}\Big) H_t  + h_t^\top R_t \alpha_t + h_t^\top \frac{R_t}{2} h_t\\
    &\hspace{5.5cm}\text{and}\\
    &\Phi^m(X+H)-\Phi^m(X) \\ & = H_T^\top \big[(Q+\bar{Q})X_T-\bar{Q}(S m_T)\big] + H_T^\top \Big(\frac{Q+\bar{Q}}{2}\Big) H_T. 
    \end{aligned}
\end{equation*}
To finish our preparations, we define
\begin{gather}
    R^m_1(X,H,\alpha,h) :=\nonumber\\\mathbb{E}\bigg[\int_0^T\Big( H_t^\top \big[(Q_t+\bar{Q}_t)X_t-\bar{Q}_t(S_t m_t) \big] \Big)\ud t +  H_T^\top \big[(Q+\bar{Q})X_T-\bar{Q}(S m_T)\big] +\int_0^T(h_t^\top R_t \alpha_t)\ud t\bigg]\label{R1}\\
    \text{and}\nonumber\\
     R^m_2(H,h) :=\mathbb{E}\bigg[\int_0^T\Big( H_t^\top \Big(\frac{Q_t+\bar{Q}_t}{2}\Big) H_t+ h_t^\top \frac{R_t}{2} h_t\Big)\ud t + H_T^\top \Big(\frac{Q+\bar{Q}}{2}\Big) H_T\bigg],\label{R2}\\
     \text{so}\nonumber\\
     \E\!\left[\int_0^T \ell^m_t(X_t+H_t,\alpha_t+h_t)\ud t + \Phi^m(X_T+H_T)\right]- \E\!\left[\int_0^T \ell^m_t(X_t,\alpha_t)\ud t + \Phi^m(X_T)\right]\nonumber\\
     = R^m_1(X,H,\alpha,h) + R^m_2(H,h). \label{R3}
\end{gather}
\noindent 
For $\varepsilon>0$, define the perturbed control
$\alpha^\varepsilon := \alpha + \varepsilon h$
and the corresponding state $ X^{\alpha^\varepsilon,m,\xi}$ solving \eqref{eq:rough_state_2}.
Since the dependence of \eqref{eq:rough_state_2} on $(x,\alpha)$ is affine, we immediately get
\begin{equation*}
    \frac{X^{\alpha^\varepsilon,m,\xi}_t-X^{\alpha,m,\xi}_t}{\varepsilon}= \int_0^t \mathcal{K}(t,s)\Big(A_s \frac{X^{\alpha^\varepsilon,m,\xi}_s-X^{\alpha,m,\xi}_s}{\varepsilon} + B_s h_s \Big)\,\ud s,
\end{equation*}
the right-hand side above is independent of $\varepsilon$, hence the G\^ateaux derivative
\[
  {(\delta X)}_t^{\alpha,h} := \lim_{\varepsilon\rightarrow 0}\frac{X^{\alpha^\varepsilon,m,\xi}_t-X^{\alpha,m,\xi}_t}{\varepsilon},
\]
exists and satisfies the affine \emph{variational equation}
\begin{equation}\label{eq:variation_appendix}
{(\delta X)}_t^{\alpha,h}
 = \int_0^t \mathcal{K}(t,s)\big(A_s {(\delta X)}_s^{\alpha,h} + B_s h_s \big)\,\ud s.
\end{equation}

Therefore, the map $\alpha\mapsto X^{\alpha,m,\xi}$ is Gâteaux differentiable.
Moreover, notice that, again due to the affine dynamics of \eqref{eq:rough_state_2}, ${(\delta X)}_t^{\alpha,h}$ does not depend on $\alpha$ either. Hence the notation ${(\delta X)}_t^{h}$ is justified and for every $\varepsilon>0,$ $h\in\widehat{\mathcal{A}},$ we have
\begin{equation*}
    X^{\alpha+\varepsilon h,m,\xi}_t = X^{\alpha,m,\xi}_t +\varepsilon {(\delta X)}_t^{h}.
\end{equation*}
Furthermore, $ \mathcal{K}(t,s),A_s,B_s$ are uniformly bounded over $s,t\in [0,T]$, hence from Tonelli's theorem and Gr\"onwall's and Jensen's inequalities, we also get the energy estimate
\begin{gather}
|{(\delta X)}_t^{h}|^2 \leq   2T^2\int_0^t |\mathcal{K}(t,s)|^2\big(|A_s|^2 |{(\delta X)}_s^{h}|^2 + |B_s|^2 |h_s|^2 \big)\,\ud s \nonumber\\
\Longrightarrow \sup_{u\in[0,t]}\mathbb{E}\Big[\big|{(\delta X)}_u^{h}\big|^2\Big]
\leq C_T \|h\|_{\mathcal A}^2+C_T\int_0^t\mathbb{E}\Big[\big|{(\delta X)}_s^{h}\big|^2\Big]\ud s\nonumber\\
\Longrightarrow \sup_{t\in[0,T]}\mathbb{E}\Big[\big|{(\delta X)}_t^{h}\big|^2\Big]
\leq C'_T \|h\|_{\mathbb{H}^2}^2,\label{eq:delta_est}
\end{gather}
for some constants $C_T,C'_T$ depending on $\mathcal{K},A,B,T$.

We can now move forward to compute the Gâteaux derivative of $\mathcal{J}^{m,\xi}$ with respect to $\widehat{\mathcal{A}}$. So first, we define $H^{\varepsilon}_t:= \varepsilon{(\delta X)}_t^{h}\ $ and by \eqref{R1} and \eqref{R2} we get
\begin{align*}
    &  \mathcal{J}^{m,\xi}(\alpha+\varepsilon h)-\mathcal{J}^{m,\xi}(\alpha)\\
    = & ~ R^m_1(X^{\alpha,m,\xi},(\delta X)^h,\alpha,\varepsilon h) + R^m_2((\delta X)^h,\varepsilon h)\\
    = & ~ \varepsilon R^m_1(X^{\alpha,m,\xi},(\delta X)^h,\alpha, h) + \varepsilon^2 R^m_2((\delta X)^h,h).
\end{align*}
In view of \eqref{eq:delta_est}
\begin{equation*}
    \big|R^m_2((\delta X)^h,h)\big| \leq C''_T \|h\|_{\mathbb{H}^2}^2,
\end{equation*}
for some constant $C''$ depending on $\mathcal{K},A,B,T,Q_t,\bar Q_t,R_t,Q,\bar Q.$ Hence,
\begin{align}
  (\delta \mathcal{J})^{m,\xi,h}(\alpha)
  &:= \lim_{\varepsilon\rightarrow 0}\frac{\mathcal{J}^{m,\xi}(\alpha+\varepsilon h)-\mathcal{J}^{m,\xi}(\alpha)}{\varepsilon} \nonumber\\&\hspace{.12cm}=  R^m_1(X^{\alpha,m,\xi},(\delta X)^h,\alpha,h)\nonumber\\
&\hspace{.12cm}=\mathbb{E}\bigg[\int_0^T\big((\delta X)^h_t\big)^\top\big[ (Q_t+\bar{Q}_t)X^{\alpha,m,\xi}_t-\bar{Q}_t (S_t m_t)\big]\ud t\nonumber\\&\qquad+\big((\delta X)^h_T\big)^\top\big[ (Q+\bar{Q})X^{\alpha,m,\xi}_T-\bar{Q}(S m_T)\big] +\int_0^T(\alpha_t^\top R_t h_t)\ud t\bigg]\label{eq 08},
  \end{align}
which proves G\^ateaux differentiability of $\mathcal{J}^{m,\xi}(\cdot)$ with derivative $(\delta \mathcal{J})^{m,\xi,h}(\cdot)$.

\noindent \textbf{Step 2. (Reduced form of $(\delta \mathcal{J})^{m,\xi,h}(\alpha)$ and adjoint process) }
At this point we eliminate the ${(\delta X)}^{h}$ terms in \eqref{eq 08}
using an integration by parts argument and an adjoint process. So, let $Y^{\alpha,m,\xi}$ be an adapted process satisfying the stochastic backward (adjoint) Volterra differential equation 
\begin{equation}
\begin{aligned}
Y^{\alpha,m,\xi}_t
 &= \mathcal{K}(T,t)^{\top}\big((Q+\bar Q) X_T^{\alpha,m,\xi} - \bar Q Sm_T\big) 
 \\
 &\quad+\int_t^T \mathcal{K}(s,t)^{\top} \Big((Q_s+\bar Q_s) X_s^{\alpha,m,\xi} - \bar Q_s S_s m_s + A_s^{\top} Y^{\alpha,m,\xi}_s\Big) \ud s
    \\
   &\quad-\int_t^T \mathcal{K}(s,t)^{\top} Z^{\alpha,m,\xi}_s\ud W_s.\label{eq:adjoint_appendix}
   \end{aligned}
\end{equation}
Regarding well posedeness of $Y^{\alpha,m,\xi}$ see Remark \ref{rmk 3.4} below.

Now, consider the scalar process $(Y^{\alpha,m,\xi}_t)^\top {(\delta X)}_t^{h}$.
The integration-by-parts formula for semimartingales gives
\begin{align}
(Y^{\alpha,m,\xi}_T)^\top {(\delta X)}_T^{h} &~ = \big(\mathcal{K}(T,0)^\top Y^{\alpha,m,\xi}_T\big)^\top \big(\mathcal{K}(0,T){(\delta X)}_T^{h}\big)
\nonumber \\
 & ~ = \int_0^T \big(\mathcal{K}(t,0)^\top Y^{\alpha,m,\xi}_t\big)^\top \ud \big(\mathcal{K}(0,t){(\delta X)}_t^{h}\big)\label{eq. 00015}\\ 
 \quad + \int_0^T \big(\mathcal{K}(0,t){(\delta X)}_t^{h}\big)^\top \ud \big(\mathcal{K}(t,0)^{\top}Y^{\alpha,m,\xi}_t\big).\nonumber
\end{align}
By \eqref{eq:variation_appendix} and
\eqref{eq:adjoint_appendix} we have 
\begin{align*}
\int_0^T \big(\mathcal{K}(t,0)^\top Y^{\alpha,m,\xi}_t\big)^\top \ud \big(\mathcal{K}(0,t){(\delta X)}_t^{h}\big) = \int_0^T\Big((Y^{\alpha,m,\xi}_t)^\top A_t {(\delta X)}_t^{h}\Big)\ud t + \int_0^T\Big((Y^{\alpha,m,\xi}_t)^\top B_t h_t\Big)\ud t,
\end{align*}
and 
\begin{align*}
& \int_0^T \big(\mathcal{K}(0,t){(\delta X)}_t^{h} \big)^\top \ud \big(\mathcal{K}(t,0)^{\top}Y^{\alpha,m,\xi}_t\big) 
\\ 
= & ~  \int_0^T\Big(\big({(\delta X)}_t^{h}\big)^\top A_t^{\top} Y^{\alpha,m,\xi}_t\Big)\ud t\\&
-\int_0^T \big({(\delta X)}_t^{h} \big)^\top \big( (Q_t+\bar Q_t) X_t^{\alpha,m,\xi} - \bar Q_t S_t m_t \big) \ud t + \int_0^T \big({(\delta X)}_t^{h}\big)^\top Z^{\alpha,m,\xi}_t \ud W_t
\end{align*}
respectively.
By \eqref{eq:delta_est} $\int_0^\cdot \big({(\delta X)}_t^{h}\big)^\top Z^{\alpha,m,\xi}_t \ud W_t$ is a true martingale, hence
\begin{equation*}
    \begin{aligned}
   (Y^{\alpha,\pmb{\eta},m}_T)^\top {(\delta X)}_T^{h}
   &= - \int_0^T \big({(\delta X)}_t^{h}\big)^\top \big( (Q_t+\bar Q_t) X_t^{\alpha,m,\xi} - \bar Q_t S_t m_t \big)  \ud t \\& \quad +\int_0^T\Big((Y^{\alpha,m,\xi}_t)^\top B_t h_t\Big) \ud t+ M_T.
    \end{aligned}
\end{equation*}
Taking expectations (the martingale term $M_T$ vanishes) gives
\begin{gather*}
\E\Big[(Y^{\alpha,m,\xi}_T)^\top {(\delta X)}_T^{h}\Big]
 = ~ -\E\!\left[
     \int_0^T \big({(\delta X)}_t^{h}\big)^\top\big( (Q_t+\bar Q_t) X_t^{\alpha,m,\xi} - \bar Q_t S_t m_t \big) \ud t
     \right]
  \\ \quad + ~ \E\!\left[
       \int_0^T \Big((Y^{\alpha,m,\xi}_t)^\top B_t h_t\Big)\ud t
     \right].
\end{gather*}
Using $Y^{\alpha,m,\xi}_T = (Q+\bar Q) X_T^{\alpha,m,\xi} - \bar Q Sm_T$, we obtain
\begin{gather}
\E\!\Big[
   \big({(\delta X)}_T^{h}\big)^\top \big((Q+\bar Q) X_T^{\alpha,m,\xi} - \bar Q Sm_T\big)
 +  \int_0^T \big({(\delta X)}_t^{h}\big)^\top\big( (Q_t+\bar Q_t) X_t^{\alpha,m,\xi} - \bar Q_t S_t m_t \big) \ud t
 \Big]\nonumber\\
 = \E\!\left[
       \int_0^T \Big((Y^{\alpha,m,\xi}_t)^\top B_t h_t \Big)\ud t
     \right].\label{eq:ibp_appendix}
\end{gather}

Substituting \eqref{eq:ibp_appendix} into \eqref{eq 08} we obtain
\begin{equation}\label{eq 011}
   (\delta \mathcal{J})^{m,\xi,h}(\alpha)
  = \E\!\left[
      \int_0^T
        \big( B_t^\top Y^{\alpha,m,\xi}_t + R_t\alpha_t \big)^\top h_t\,\ud t
    \right].
\end{equation}

\noindent \textbf{Step 3. (First order condition)} Using \eqref{eq 011} we are going to derive a condition for a point $\alpha^*$ of $\widehat{\mathcal{A}}$ to be critical.
\noindent Since $h$ is arbitrary in $\widehat{\mathcal A}$, the necessary condition for optimality is
\[
  (\delta \mathcal{J})^{m,\xi,h}(\alpha)=0, \quad \forall\,h\in\widehat{\mathcal{A}}
  \quad\Longleftrightarrow\quad
 B_t^\top Y^{\alpha,m,\xi}_t + R_t\alpha_t  = 0 \quad \text{a.s., for a.e. }t\in[0,T].
\]
Therefore, 
\begin{equation}\label{eq:opt_control_appendix}
\alpha_t^* = - R_t^{-1} B_t^\top Y^{\alpha^*,m,\xi}_t,
\end{equation}
and one arrives at the coupled forward--backward stochastic Volterra system of differential equations \eqref{stoch_max_principle Volterra FBSDE} that an optimal quadruple $(\alpha^*,X^{\alpha^*,m,\xi},Y^{\alpha^*,m,\xi}, Z^{\alpha^*,m,\xi})$ must satisfy together with \eqref{eq:opt_control_appendix}.

\noindent \textbf{Step 4. (Sufficiency)} Finally, we show that if a triplet $(X^{m,\xi},Y^{m,\xi},Z^{m,\xi})\in \mathbb{S}^2([0,T];\mathbb{R}^d)\times \mathbb{S}^2([0,T];\mathbb{R}^d)\times \mathbb{H}^2([0,T];\mathbb{R}^{d\times q})$ satisfies system \eqref{stoch_max_principle Volterra FBSDE}, and one defines $\alpha^*$ by \eqref{eq:opt_control_appendix} using $Y^{m,\xi}$, then
in fact $\alpha^*\in\widehat{\mathcal{A}}$ and is a global unique minimizer of $\mathcal{J}^{m,\xi}(\cdot)$ over $\widehat{\mathcal{A}}$.
This result is stated and proved as Lemma \ref{lemma 03.6} below.
Hence, the proof of Theorem \ref{thm Stoch_optimal_control_Volterra} is now complete.
\end{proof}

\begin{lemma}[Optimality of $\alpha^*$]\label{lemma 03.6}
Let $m\in C([0,T];\mathbb{R}^d)$, $\xi\in \mathbb{L}^2(\mathcal{F}_0;\mathbb{R}^d)$ and assume that the Volterra system \eqref{stoch_max_principle Volterra FBSDE} has a solution $(X^{m,\xi},Y^{m,\xi},Z^{m,\xi})$ on $[0,T]$. Then, for every $h\in\widehat{\mathcal{A}}$ 
we have 
    \begin{equation*}
    \mathcal{J}^{m,\xi}(\alpha^*)+ \lambda\|h\|_{\mathbb{H}^2}^2 \leq \mathcal{J}^{m,\xi}(\alpha^*+ h).
\end{equation*}
\end{lemma}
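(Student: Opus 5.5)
The plan is to compute the increment $\mathcal{J}^{m,\xi}(\alpha^*+h)-\mathcal{J}^{m,\xi}(\alpha^*)$ \emph{exactly} rather than estimate it. Affinity of the state dynamics makes the cost quadratic along the line $\alpha^*+h$, and the adjoint identity from Step~2 of the proof of Theorem~\ref{thm Stoch_optimal_control_Volterra} kills the linear part.

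\textbf{Step 1 (admissibility and exact expansion).} Since $Y^{m,\xi}\in\mathbb{S}^2$ and $B,R^{-1}$ are bounded by (\ref{S2})--(\ref{S3}), $\alpha^*=-R^{-1}B^\top Y^{m,\xi}$ is progressively measurable and lies in $\widehat{\mathcal A}$. By uniqueness for \eqref{eq:rough_state_2} (Remark~\ref{rmk 3.2}), $X^{\alpha^*,m,\xi}=X^{m,\xi}$. As in Step~1 of the proof of Theorem~\ref{thm Stoch_optimal_control_Volterra} (taking $\varepsilon=1$), $X^{\alpha^*+h,m,\xi}=X^{m,\xi}+(\delta X)^h$, with $(\delta X)^h$ solving \eqref{eq:variation_appendix}. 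Applying \eqref{R3} with $X=X^{m,\xi}$, $H=(\delta X)^h$ and $\alpha=\alpha^*$ then gives
\[
\mathcal{J}^{m,\xi}(\alpha^*+h)-\mathcal{J}^{m,\xi}(\alpha^*)=R^m_1\big(X^{m,\xi},(\delta X)^h,\alpha^*,h\big)+R^m_2\big((\delta X)^h,h\big).
\]

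\textbf{Step 2 (the linear term vanishes).} The pair $(Y^{m,\xi},Z^{m,\xi})$ is the backward component of \eqref{stoch_max_principle Volterra FBSDE}, and so it is exactly the adjoint \eqref{eq:adjoint_appendix} with $\alpha=\alpha^*$. The integration by parts \eqref{eq. 00015} therefore applies verbatim. Multiplying by the invertible kernels (Remark~\ref{rmk 3.1}), it turns the Volterra equations into the semimartingale differentials used there.

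The one point to check is that $\int_0^\cdot ((\delta X)^h_t)^\top Z^{m,\xi}_t\,\ud W_t$ is a true martingale. Here $(\delta X)^h$ satisfies a pathwise Gr\"onwall bound by $\int_0^T|h|$, so $\sup_t|(\delta X)^h_t|\in\mathbb L^2$, and $Z^{m,\xi}\in\mathbb{H}^2$. Burkholder--Davis--Gundy together with Cauchy--Schwarz then gives the required integrability.

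This yields \eqref{eq:ibp_appendix}, and hence \eqref{eq 011}:
\[
R^m_1=\E\int_0^T\big(B_t^\top Y^{m,\xi}_t+R_t\alpha^*_t\big)^\top h_t\,\ud t=0
\]
by the definition of $\alpha^*$. This step is the technical core, but it is already contained in the previous proof.

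\textbf{Step 3 (coercivity of the quadratic term).} From \eqref{R2}, using $Q_t+\bar Q_t\succeq0$ and $Q+\bar Q\succeq0$ from (\ref{S3}), I drop the $H$-terms and keep
\[
R^m_2\ \ge\ \tfrac12\,\E\int_0^T h_t^\top R_t h_t\,\ud t .
\]
With $R_t\succ\lambda\,\mathrm{Id}$ this gives $\ge\tfrac{\lambda}{2}\|h\|_{\mathbb{H}^2}^2$.

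This is where I expect the main obstacle: passing from $\lambda/2$ to the stated constant $\lambda$. The normalization $\ell^m=\frac12(\cdots+\alpha^\top R\alpha)$ introduces a factor $\tfrac12$. The identity of Step~2 is sharp; for instance, with $B\equiv0$ one gets equality $R^m_2=\frac12\E\int h^\top Rh$. So the constant $\lambda$ is obtained only if the ellipticity in (\ref{S3}) is read for the Hessian of $\ell^m$ in $\alpha$, i.e.\ $\tfrac12R_t\succeq\lambda\,\mathrm{Id}$, equivalently $R_t\succeq 2\lambda\,\mathrm{Id}$.

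I would therefore state the result with the convention that $\lambda$ is the lower bound of $\partial^2_{\alpha\alpha}\ell^m_t=\tfrac12 R_t$ (rather than of $R_t$ itself), under which Step~3 gives exactly
\[
\mathcal{J}^{m,\xi}(\alpha^*)+\lambda\|h\|_{\mathbb H^2}^2\le\mathcal{J}^{m,\xi}(\alpha^*+h).
\]
Uniqueness of the minimizer follows immediately by taking $h\neq0$. Under the literal reading $R_t\succ\lambda\,\mathrm{Id}$, the argument yields the bound with $\lambda/2$. In that case the sole change needed in the statement is the constant, and the conclusions about uniqueness and global minimality used in Theorem~\ref{thm Stoch_optimal_control_Volterra} are unaffected.
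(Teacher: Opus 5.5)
Your argument is the paper's own: you split the increment via \eqref{R3}, kill $R^m_1$ by \eqref{eq 011} and the definition of $\alpha^*$, and bound $R^m_2$ from below by discarding the nonnegative $H$-terms. Your pathwise Gr\"onwall bound on $\sup_t|(\delta X)^h_t|$, combined with BDG and Cauchy--Schwarz, is if anything a more complete justification of the true-martingale property than the paper's appeal to \eqref{eq:delta_est}.

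Your concern about the constant is correct and exposes a slip in the paper. The paper passes from $\lambda\|h\|^2_{\mathbb{H}^2}\le\E\int_0^T h_t^\top R_t h_t\,\ud t$ directly to \eqref{ineq 018}, dropping the factor $\tfrac12$ in \eqref{R2}. Under $R_t\succ\lambda\,\mathrm{Id}$ only $\tfrac{\lambda}{2}\|h\|^2_{\mathbb{H}^2}$ follows. Your $B\equiv0$ observation, with for example $R_t=\lambda'\,\mathrm{Id}$ and $\lambda<\lambda'<2\lambda$, shows the stated constant genuinely fails. Uniqueness and global minimality in Theorem~\ref{thm Stoch_optimal_control_Volterra} are unaffected.

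One small slip of yours: $\partial^2_{\alpha\alpha}\ell^m_t=R_t$, not $\tfrac12R_t$. The convention that yields the constant $\lambda$ is therefore the $\lambda$-convexity bound $\ell^m_t(x,\alpha+h)-\ell^m_t(x,\alpha)-h^\top R_t\alpha\ge\lambda|h|^2$, i.e.\ $R_t\succeq2\lambda\,\mathrm{Id}$.
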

\begin{proof}[Proof of Lemma 3.7]
As we have already seen, by \eqref{R1}, \eqref{R2} and \eqref{R3}, for every $h\in \widehat{\mathcal{A}}$ we have
\begin{gather*}
    \mathcal{J}^{m,\xi}(\alpha^*+h) - \mathcal{J}^{m,\xi}(\alpha^*)
=    R^m_1({X}^{m,\xi},{(\delta X)}^{h},\alpha^*,h)+  R^m_2({(\delta X)}^{h},h),
\end{gather*}
and because, by \eqref{eq 011} and \eqref{eq:opt_control_appendix} we have by design $R^m_1({X}^{m,\xi},{(\delta X)}^{h},\alpha^*,h)=0$, we actually have
\begin{gather*}
    \mathcal{J}^{m,\xi}(\alpha^*+h) - \mathcal{J}^{m,\xi}(\alpha^*)
=      R^m_2({(\delta X)}^{h},h).
\end{gather*}

So, by (\ref{S2}), \(R_t\succ \lambda \text{Id}_{d\times d}\) and \(Q_t,\bar Q_t,Q, \bar Q\succeq 0\), we get
\begin{gather*}
    \E\Bigg[
     \int_0^T \frac{1}{2}\Big[H_t^\top (Q_t+\bar{Q}_t) H_t\Big] \ud t
 + \frac{1}{2} H_T^\top (Q +\bar Q)H_T
   \Bigg]\geq 0\\
   \text{and}\\
  \lambda\|h\|_{\mathbb{H}^2}^2 \leq\E\Bigg[\int_0^Th_t^\top R_t h_t\ud t\Bigg].
\end{gather*}
From the above we deduce that for every $H\in  \mathbb{S}^2([0,T];\mathbb{R}^d)$ and $h\in\widehat{\mathcal{A}}$ we have
\begin{gather}
\lambda\|h\|_{\mathbb{H}^2}^2 \leq R^m_2(H,h)\label{ineq 018}
\end{gather}
and thus
   $\mathcal{J}^m(\alpha^*)+ \lambda\|h\|_{\mathbb{H}^2}^2 \leq \mathcal{J}^m(\alpha^*+ h).$ The proof of Lemma 3.7 is complete.
\end{proof}

\begin{rmk}\label{rmk 3.4}
By defining $\widehat{Y}^{\alpha,m,\xi}_t:= \mathcal{K}(t,0)^\top {Y}^{\alpha,m,\xi}_t$ and $\widehat{Z}^{\alpha,m,\xi}_t:= \mathcal{K}(t,0)^{\top} Z^{\alpha,m,\xi}_t$, equation \eqref{eq:adjoint_appendix} becomes equivalent to
\begin{align*}
    \widehat{Y}^{\alpha,m,\xi}_t
    &~ = \mathcal{K}(T,0)^{\top}\big((Q+\bar Q) X_T^{\alpha,m,\xi} - \bar Q Sm_T\big) \nonumber
    \\ &\quad+\int_t^T \mathcal{K}(s,0)^{\top} \Big((Q_s+\bar Q_s) X_s^{\alpha,m,\xi} - \bar Q_s S_s m_s + A_s^{\top}\mathcal{K}(0,s)^{\top} \widehat{Y}^{\alpha,m,\xi}_s\Big) \ud s 
    \\
   & \quad -\int_t^T \widehat{Z}^{\alpha,m,\xi}_s\ud W_s.
\end{align*}
The latter is a classical affine backward SDE with a unique solution $$(\widehat{Y}^{\alpha,m,\xi},\widehat{Z}^{\alpha,m,\xi})\in \mathbb{S}^2([0,T];\mathbb{R}^d)\times\mathbb{H}^2([0,T];\mathbb{R}^{d\times q}),$$ see for example \cite[Theorem 4.23]{carmona2018probabilistic}, hence, for every $\xi\in \mathbb{L}^2(\mathcal{F}_0), \alpha \in\widehat{\mathcal{A}}$ and $m\in C([0,T];\mathbb{R}^d)$, \eqref{eq:adjoint_appendix} also has a unique solution $$({Y}^{\alpha,m,\xi},{Z}^{\alpha,m,\xi})\in \mathbb{S}^2([0,T];\mathbb{R}^d)\times\mathbb{H}^2([0,T];\mathbb{R}^{d\times q}).$$
\end{rmk}

\begin{rmk}\label{rmk> 3.8}
Due to the nature of the cost function, $Y^{\alpha,m,\xi}$ was introduced as a solution of a tailor-made Volterra BSDE. This allowed us to use the standard product formula for semimartingales to calculate the product $(Y^{\alpha,m,\xi}_T)^\top {(\delta X)}_T^{h}$, see \eqref{eq. 00015} above. If we worked directly in a rough setting the analogue of $Y^{\alpha,m,\xi}$ would have been introduced as a solution of a rough BSDE, and at this step we would needed to use a rough product formula such as Theorem \ref{rough_product_formula}. As seen therein, this requires strong integrability conditions for the diffusion that in general one cannot get from the martingale representation theorems. Here, in the linear-quadratic setting, we have available (global) decoupling fields, so we can guarantee these integrability conditions (a posteriori) and offer an alternative route. But in more general settings\footnote{Of course with affine state dependence in the rough integral in order to be able to get the mild formulation of the dynamics.}, where (global) decoupling fields are not available, it is not clear if and how one can do this. However, even in these situations we could still push through with the mild formulation. 
\end{rmk}

\subsection{Well posedness of the Volterra FBSDE}\label{subsec:VFBSDE-WP}
In this subsection we provide the proof of  existence and uniqueness of the Volterra FBSDE \eqref{stoch_max_principle Volterra FBSDE}.
\begin{thm}\label{thm 3.9}
 Let (S1), (S2) and (S3) hold true, then for every fixed $m\in C([0,T];\mathbb{R}^d)$ and $\xi\in \mathbb{L}^2(\mathcal{F}_0;\mathbb{R}^d)$, the Volterra FBSDE \eqref{stoch_max_principle Volterra FBSDE} has a unique solution $({X}^{m,\xi},{Y}^{m,\xi},{Z}^{m,\xi})\in \mathbb{S}^2([0,T];\mathbb{R}^d)\times \mathbb{S}^2([0,T];\mathbb{R}^d)\times \mathbb{H}^2([0,T];\mathbb{R}^{d\times q})$. 
Furthermore, there exist continuously differentiable $P:[0,T]\rightarrow\mathbb{R}^{d\times d}, \Pi: [0,T]\rightarrow\mathbb{R}^{d}$ such that
 \begin{gather}
    Y^{m,\xi}_t = \big(\mathcal{K}(0,t)^\top P_t  \mathcal{K}(0,t)\big) {X}^{m,\xi}_t + \Big[\mathcal{K}(0,t)^\top\Pi_t -\big(\mathcal{K}(0,t)^\top P_t  \mathcal{K}(0,t)\big)M(m)_t \Big],
\end{gather}
where $P_t, \Pi_t$ are the unique solutions of the system
\begin{equation}\label{eq symmetric_Riccati}
    \left\{\begin{aligned}
        &\dot{P_t}+P_t \widehat{A}_t  + (\widehat{A}_t)^{\top} P_t - P_t\widehat{B}_t (\widehat{R}_t)^{-1} (\widehat{B}_t)^\top P_t +(\widehat{Q}_t+\widehat{\bar Q}_t)=0,\\&
        \dot{\Pi}_t + \big((\widehat{A}_t)^{\top} - P_t\widehat{B}_t (\widehat{R}_t)^{-1} (\widehat{B}_t)^\top  \big)\Pi_t+P_t \widehat{D}^m_t +(\widehat{Q}_t+\widehat{\bar Q}_t)\mathcal{K}(0,t)M(m)_t- \widehat{\bar Q}_t\mathcal{K}(0,t) S_t m_t=0,
    \end{aligned}\right.
\end{equation}
with terminal conditions $P_T = \widehat{Q}+\widehat{\bar Q}$ and $\Pi_T = (\widehat{Q}+\widehat{\bar Q}) \mathcal{K}(0,T)M(m)_T- \widehat{\bar Q} \mathcal{K}(0,T)Sm_T$, while
$\widehat{A}_t, \widehat{B}_t, \widehat{R}_t, \widehat{Q}_t, \widehat{\bar{Q}}_t, \widehat{D}^m_t, \widehat{Q}, \widehat{\bar{Q}}$ are introduced below at \eqref{defn 3.8}.
\end{thm}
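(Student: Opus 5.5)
We reduce the Volterra FBSDE \eqref{stoch_max_principle Volterra FBSDE} to a classical linear FBSDE through the flow transformation of Remarks \ref{rmk 3.2} and \ref{rmk 3.4}, and then solve that system with the standard Riccati decoupling.

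\textbf{Step 1: transformation.} Set
\[
\widehat X_t:=\mathcal{K}(0,t)\big(X_t-M(m)_t\big),\qquad \widehat Y_t:=\mathcal{K}(t,0)^\top Y_t,\qquad \widehat Z_t:=\mathcal{K}(t,0)^\top Z_t.
\]
Using \eqref{S1}(ii), we have $\mathcal{K}(0,t)\mathcal{K}(t,s)=\mathcal{K}(0,s)$ and $\mathcal{K}(t,0)^\top\mathcal{K}(s,t)^\top=\mathcal{K}(s,0)^\top$. With these identities the system becomes a standard linear FBSDE for $(\widehat X,\widehat Y,\widehat Z)$:
\[
\ud\widehat X_t=\big(\widehat A_t\widehat X_t-\widehat B_t\widehat R_t^{-1}\widehat B_t^\top\widehat Y_t+\widehat D^m_t\big)\ud t+\mathcal{K}(0,t)\Sigma_t\ud W_t,
\]
\[
-\ud\widehat Y_t=\big(\widehat A_t^\top\widehat Y_t+(\widehat Q_t+\widehat{\bar Q}_t)\widehat X_t+(\widehat Q_t+\widehat{\bar Q}_t)\mathcal{K}(0,t)M(m)_t-\widehat{\bar Q}_t\mathcal{K}(0,t)S_tm_t\big)\ud t-\widehat Z_t\ud W_t.
\]
The natural definitions of the hatted coefficients are then
\[
\widehat A_t=\mathcal{K}(0,t)A_t\mathcal{K}(t,0),\quad \widehat B_t=\mathcal{K}(0,t)B_t,\quad \widehat R_t=R_t,
\]
\[
\widehat Q_t=\mathcal{K}(t,0)^\top Q_t\mathcal{K}(t,0),\quad \widehat{\bar Q}_t=\mathcal{K}(t,0)^\top\bar Q_t\mathcal{K}(t,0),
\]
\[
\widehat D^m_t=\mathcal{K}(0,t)\big(A_tM(m)_t+C_tm_t\big),
\]
with the analogous terminal objects $\widehat Q$, $\widehat{\bar Q}$ built from $\mathcal{K}(T,0)$. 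These presumably match \eqref{defn 3.8}; the point of this step is to check this carefully.

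The terminal condition also transforms correctly. Substituting $X_T=\mathcal{K}(T,0)\widehat X_T+M(m)_T$ gives $\widehat Y_T=\widehat P_T$-type affine data: it equals $(\widehat Q+\widehat{\bar Q})\widehat X_T$ plus exactly the affine term $\Pi_T$ stated in the theorem, after factoring $\mathcal{K}(T,0)^\top$ appropriately. Since the map $(X,Y,Z)\mapsto(\widehat X,\widehat Y,\widehat Z)$ is a bijection on $\mathbb{S}^2\times\mathbb{S}^2\times\mathbb{H}^2$, existence and uniqueness for the two systems are equivalent. This uses that $\mathcal{K}(\cdot,0)$ and its inverse are continuous, hence bounded, by \eqref{S1}(iii) and Remark \ref{rmk 3.1}.

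\textbf{Step 2: Riccati equations.} All coefficients are continuous, and $\widehat Q_t+\widehat{\bar Q}_t\succeq0$, $\widehat Q+\widehat{\bar Q}\succeq0$, $\widehat R_t\succ\lambda\,\mathrm{Id}$. The symmetric Riccati equation in \eqref{eq symmetric_Riccati} is therefore the standard LQ Riccati equation. It has a unique local $C^1$ solution, which stays symmetric by uniqueness. The main obstacle is global non-explosion on $[0,T]$. For this we use the standard a priori bounds. The lower bound $P_t\succeq0$ follows from the representation of $x^\top P_tx$ as the value function of the deterministic LQ problem with these coefficients. The upper bound follows by comparing with the zero control, equivalently with the linear Lyapunov equation obtained by dropping the quadratic term. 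These two bounds give $0\preceq P_t\preceq\Gamma_t$, so the local solution extends to all of $[0,T]$. Given $P$, the equation for $\Pi$ is linear with continuous coefficients, so it has a unique global $C^1$ solution.

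\textbf{Step 3: decoupling, verification and uniqueness.} First solve the forward linear SDE with $\widehat Y$ replaced by $P_t\widehat X_t+\Pi_t$; it has a unique $\mathbb{S}^2$ solution by Remark \ref{rmk 3.2}. Then define $\widehat Y:=P\widehat X+\Pi$ and $\widehat Z:=P\,\mathcal{K}(0,\cdot)\Sigma$. Itô's formula together with the Riccati equations shows that $(\widehat X,\widehat Y,\widehat Z)$ solves the backward equation, which gives existence.

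For uniqueness, take any solution and set $\Delta_t:=\widehat Y_t-P_t\widehat X_t-\Pi_t$. Itô's formula shows that $\Delta$ solves a linear BSDE with zero terminal value and zero source:
\[
-\ud\Delta_t=\big(\widehat A_t-\widehat B_t\widehat R_t^{-1}\widehat B_t^\top P_t\big)^\top\Delta_t\,\ud t-(\cdots)\,\ud W_t.
\]
Hence $\Delta\equiv0$. The forward equation is then the decoupled SDE from the existence part, so $\widehat X$ is unique, and therefore so are $\widehat Y$ and $\widehat Z$.

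Finally, transform back. From $Y_t=\mathcal{K}(0,t)^\top\widehat Y_t$ and $\widehat X_t=\mathcal{K}(0,t)(X_t-M(m)_t)$ we get the stated affine formula
\[
Y_t=\mathcal{K}(0,t)^\top P_t\mathcal{K}(0,t)X_t+\mathcal{K}(0,t)^\top\Pi_t-\mathcal{K}(0,t)^\top P_t\mathcal{K}(0,t)M(m)_t.
\]
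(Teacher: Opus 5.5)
Your proposal is correct and follows the paper's proof. Both transform by the inverse kernel into a classical affine FBSDE, decouple with the affine ansatz $\widehat Y=P\widehat X+\Pi$ through the Riccati system, verify via It\^o's formula, and obtain uniqueness from the decoupling field. The differences are minor: you prove global existence of $P$ directly via the classical bounds $0\preceq P_t\preceq\Gamma_t$ instead of citing Theorem \ref{thm: sym_rough_riccati} with zero rough part, and uniqueness via the linear BSDE for $\Delta=\widehat Y-P\widehat X-\Pi$ instead of citing \cite[Proposition 4.8]{carmona2018probabilistic}. Also, your $\widehat B_t=\mathcal{K}(0,t)B_t$ and $\widehat R_t=R_t$ do not literally match \eqref{defn 3.8}, but they give the same product $\widehat B_t\widehat R_t^{-1}\widehat B_t^\top=\mathcal{K}(0,t)B_tR_t^{-1}B_t^\top\mathcal{K}(0,t)^\top$, so the Riccati system is unchanged.
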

This result can be viewed as a generalization of the standard well posedness for the characteristic FBSDE of the SMP in the LQ setting, see for example \cite[p. 184]{carmona2018probabilistic}.
\begin{proof}
\noindent \textbf{Step 1. (Inverse kernel transformation)}
We first introduce the processes:
\begin{equation}\label{defn 2.8}
\begin{split}
    \widehat{X}^{m,\xi}_t & := \mathcal{K}(0,t)\big({X}^{m,\xi}_t-M(m)_t\big), \\
    \widehat{Y}^{m,\xi}_t & := \mathcal{K}(t,0)^\top{Y}^{m,\xi}_t,\\ 
    \widehat{\Sigma}_t & := \mathcal{K}(0,t)\Sigma_t, \\\widehat{Z}_t^{m,\xi} & := \mathcal{K}(s,0)^\top Z_s,
\end{split}
\end{equation}
where $\mathcal{K}(t,0)^{-1}=\mathcal{K}(0,t)$, see Remark \ref{rmk 3.1}. In view of \ref{S3}, the latter transform 
\eqref{stoch_max_principle Volterra FBSDE} into the following standard FBSDE with affine structure:

\begin{equation}\label{stoch_max_principle Volterra FBSDE 3}
\left\{\begin{aligned}
&\widehat{X}^{m,\xi}_t = \xi+ \int_0^t \mathcal{K}(0,s)\big(A_s\mathcal{K}(s,0) \widehat{X}^{m,\xi}_s - B_s R_s^{-1} B_s^\top\mathcal{K}(0,s)^\top \widehat{Y}^{m,\xi}_s +A_s M(m)_s+ C_s m_s\big)\,\ud s
  \\&\quad\quad\quad\quad
  + \int_0^t \widehat{\Sigma}_s\ud W_s,\\&
\widehat{Y}^{m,\xi}_t
 = \mathcal{K}(T,0)^\top\big((Q+\bar Q)\mathcal{K}(T,0) \widehat{X}_T^{m,\xi} +(Q+\bar Q)M(m)_T- \bar Q Sm_T\big)  \\&\quad+\int_t^T \mathcal{K}(s,0)^\top\Big((Q_s+\bar Q_s)\mathcal{K}(s,0) \widehat{X}_s^{m,\xi} +(Q_s+\bar Q_s)M(m)_s- \bar Q_s S_s m_s+A_s^{\top}\mathcal{K}(0,s)^\top \widehat{Y}^{m,\xi}_s\Big) \ud s
   \\&\quad\quad-\int_t^T \widehat{Z}_s^{m,\xi}\ud W_s.
\end{aligned}\right.
\end{equation}
\noindent \textbf{Step 2. (Decoupling field)}
In order to solve \eqref{stoch_max_principle Volterra FBSDE 3} we are going to assume a decoupling field $V$ with affine structure. Subsequently, we verify that the results we obtain indeed satisfy \eqref{stoch_max_principle Volterra FBSDE 3}. The ansatz is
\begin{gather}
V(t,x):= P_t x+\Pi_t,\hspace{.1cm}\forall(t,x)\in[0,T]\times \mathbb{R}^d\nonumber\\
\text{and}\nonumber\\
    \widehat{Y}^{m,\xi}_t = P_t \widehat{X}^{m,\xi}_t+\Pi_t.\label{eq 019}
\end{gather}
Of course, we have so far only postulated the form of $V.$ In order to properly define it, we must specify the coefficients $P:[0,T]\rightarrow\mathbb{R}^{d\times d}, \Pi: [0,T]\rightarrow\mathbb{R}^{d}$. By taking expectations on \eqref{stoch_max_principle Volterra FBSDE 3} and \eqref{eq 019}
we get 
\begin{equation}\label{stoch_max_principle Volterra FBSDE 3_EX}
\left\{\begin{aligned}
&\mathbb{E}[\widehat{X}^{m,\xi}_t] = \mathbb{E}[\xi]+ \int_0^t \mathcal{K}(0,s)\big(A_s\mathcal{K}(s,0) \mathbb{E}[\widehat{X}^{m,\xi}_s] - B_s R_s^{-1} B_s^\top\mathcal{K}(0,s)^\top \mathbb{E}[\widehat{Y}^{m,\xi}_s] \\&\quad\quad\quad\quad+A_s M(m)_s
+ C_s m_s\big)\,\ud s\\&
\mathbb{E}[\widehat{Y}^{m,\xi}_t]
 = \mathcal{K}(T,0)^\top\big((Q+\bar Q)\mathcal{K}(T,0) \mathbb{E}[\widehat{X}_T^{m,\xi}] + (Q+\bar Q)M(m)_T- \bar Q Sm_T\big)  \\&\quad\quad+\int_t^T \mathcal{K}(s,0)^\top\Big((Q_s+\bar Q_s)\mathcal{K}(s,0) \mathbb{E}[\widehat{X}_s^{m,\xi}]+(Q_s+\bar Q_s)M(m)_s\\&\quad\quad\quad\quad - \bar Q_s S_s m_s+A_s^{\top}\mathcal{K}(0,s)^\top \mathbb{E}[\widehat{Y}^{m,\xi}_s]\Big) \ud s.
\end{aligned}\right.
\end{equation}
and
\begin{equation}\label{eq 021}
    \mathbb{E}[\widehat{Y}^{m,\xi}_t] = P_t \mathbb{E}[\widehat{X}^{m,\xi}_t]+\Pi_t.
\end{equation}
For ease of notation we define, for every $t\in[0,T],$ the transformed coefficients
\begin{equation}\label{defn 3.8}
 \begin{aligned}
    &\widehat{A}_t:= \mathcal{K}(0,t)A_t\mathcal{K}(t,0), \hspace{.1cm} \widehat{B}_t:= \mathcal{K}(0,t)B_t\mathcal{K}(t,0),\hspace{.1cm} \widehat{R}_t:= \mathcal{K}(t,0)^\top R_t\mathcal{K}(t,0),\hspace{.1cm}
    \widehat{Q}_t:= \mathcal{K}(t,0)^\top Q_t\mathcal{K}(t,0),\\& \widehat{\bar{Q}}_t:= \mathcal{K}(t,0)^\top \bar{Q}_t\mathcal{K}(t,0),\hspace{.1cm}\widehat{Q}:= \mathcal{K}(T,0)^\top Q\mathcal{K}(T,0),\hspace{.1cm} \widehat{\bar{Q}}:= \mathcal{K}(T,0)^\top \bar{Q}\mathcal{K}(T,0),\\&
\widehat{D}^m_t:=\mathcal{K}(0,t)A_t M(m)_t + \mathcal{K}(0,t)C_t m_t.
  \end{aligned}
\end{equation}
With the help of \eqref{defn 3.8} we rewrite \eqref{stoch_max_principle Volterra FBSDE 3_EX} as 
\begin{equation}\label{stoch_max_principle Volterra FBSDE 3_EX_2}
\left\{\begin{aligned}
&\mathbb{E}[\widehat{X}^{m,\xi}_t] = \mathbb{E}[\xi]+ \int_0^t \big(\widehat{A}_s \mathbb{E}[\widehat{X}^{m,\xi}_s] - \widehat{B}_s (\widehat{R}_s)^{-1} (\widehat{B}_s)^\top \mathbb{E}[\widehat{Y}^{m,\xi}_s] +\widehat{D}^{m}_s\big)\,\ud s\\&
\mathbb{E}[\widehat{Y}^{m,\xi}_t]
 = (\widehat{Q}+\widehat{\bar Q})\mathbb{E}[\widehat{X}_T^{m,\xi}] +(Q+\bar Q)M(m)_T- \widehat{\bar Q} \mathcal{K}(0,T)Sm_T\\&\quad\quad\quad\quad+\int_t^T \Big((\widehat{Q}_s+\widehat{\bar Q}_s) \mathbb{E}[\widehat{X}_s^{m,\xi}]+(\widehat{A}_s)^{\top} \mathbb{E}[\widehat{Y}^{m,\xi}_s] \\&\quad\quad\quad\quad+(\widehat{Q}_s+\widehat{\bar Q}_s)\mathcal{K}(0,s)M(m)_s - \widehat{\bar Q}_s\mathcal{K}(0,s) S_s m_s\Big) \ud s.
\end{aligned}\right.
\end{equation}

Now, by differentiating \eqref{eq 021} and using the forward part of \eqref{stoch_max_principle Volterra FBSDE 3_EX_2}, together with \eqref{eq 021}, we get
\begin{align*}
    &\dot{\mathbb{E}[\widehat{Y}^{m,\xi}_t]}=\dot{P_t}\mathbb{E}[\widehat{X}^{m,\xi}_t]+P_t \dot{ \mathbb{E}[\widehat{X}^{m,\xi}_t]}+\dot{\Pi}_t\\&\quad
    = \dot{P_t}\mathbb{E}[\widehat{X}^{m,\xi}_t]+P_t \big(\widehat{A}_t \mathbb{E}[\widehat{X}^{m,\xi}_t] - \widehat{B}_t (\widehat{R}_t)^{-1} (\widehat{B}_t)^\top \mathbb{E}[\widehat{Y}^{m,\xi}_t] +\widehat{D}^{m}_t\big)+\dot{\Pi}_t\\&\quad
    = \dot{P_t}\mathbb{E}[\widehat{X}^{m,\xi}_t]+P_t \big(\widehat{A}_t \mathbb{E}[\widehat{X}^{m,\xi}_t] - \widehat{B}_t (\widehat{R}_t)^{-1} (\widehat{B}_t)^\top \big(P_t \mathbb{E}[\widehat{X}^{m,\xi}_t]+\Pi_t\big) +\widehat{D}^{m}_t\big)+\dot{\Pi}_t\\&\quad
    = \Big[ \dot{P_t}+P_t \widehat{A}_t - P_t\widehat{B}_t (\widehat{R}_t)^{-1} (\widehat{B}_t)^\top P_t \Big]\mathbb{E}[\widehat{X}^{m,\xi}_t]+ \Big[ \dot{\Pi}_t -P_t \widehat{B}_t (\widehat{R}_t)^{-1} (\widehat{B}_t)^\top \Pi_t+ P_t\widehat{D}^m_t\Big].
\end{align*}
And by the backward part of \eqref{stoch_max_principle Volterra FBSDE 3_EX_2}, together with \eqref{eq 021}, we get
\begin{gather*}
    \dot{ \mathbb{E}[\widehat{Y}^{m,\xi}_t]}=-\Big[(\widehat{Q}_t+\widehat{\bar Q}_t) + (\widehat{A}_t)^{\top} P_t \Big] \mathbb{E}[\widehat{X}_t^{m,\xi}]-\Big[(\widehat{A}_t)^{\top} \Pi_t - \widehat{\bar Q}_t\mathcal{K}(0,t) S_t m_t\Big],
\end{gather*}
hence 
\begin{align*}
    &0= \Big[ \dot{P_t}+P_t \widehat{A}_t  + (\widehat{A}_t)^{\top} P_t - P_t\widehat{B}_t (\widehat{R}_t)^{-1} (\widehat{B}_t)^\top P_t +(\widehat{Q}_t+\widehat{\bar Q}_t)\Big]\mathbb{E}[\widehat{X}^{m,\xi}_t]\\&\hspace{.1cm}+ \Big[ \dot{\Pi}_t + \big((\widehat{A}_t)^{\top} - P_t\widehat{B}_t (\widehat{R}_t)^{-1} (\widehat{B}_t)^\top  \big)\Pi_t+P_t\widehat{D}^m_t +(\widehat{Q}_s+\widehat{\bar Q}_t)\mathcal{K}(0,t)M(m)_t- \widehat{\bar Q}_t\mathcal{K}(0,t) S_t m_t\Big].
\end{align*}
From the above equation, and because $\mathbb{E}[\xi]$ may take any value of $\mathbb{R}^d$ at \eqref{stoch_max_principle Volterra FBSDE 3_EX_2}, we derive the system of backward matrix Ordinary Differential Equations
\begin{equation}
    \left\{\begin{aligned}
        &\dot{P_t}+P_t \widehat{A}_t  + (\widehat{A}_t)^{\top} P_t - P_t\widehat{B}_t (\widehat{R}_t)^{-1} (\widehat{B}_t)^\top P_t +(\widehat{Q}_t+\widehat{\bar Q}_t)=0,\\&
        \dot{\Pi}_t + \big((\widehat{A}_t)^{\top} - P_t\widehat{B}_t (\widehat{R}_t)^{-1} (\widehat{B}_t)^\top  \big)\Pi_t+P_t \widehat{D}^m_t +(\widehat{Q}_t+\widehat{\bar Q}_t)\mathcal{K}(0,t)M(m)_t- \widehat{\bar Q}_t\mathcal{K}(0,t) S_t m_t=0,
    \end{aligned}\right.
\end{equation}
with terminal conditions $P_T = \widehat{Q}+\widehat{\bar Q}$ and $\Pi_T = (\widehat{Q}+\widehat{\bar Q}) \mathcal{K}(0,T)M(m)_T- \widehat{\bar Q} \mathcal{K}(0,T)Sm_T$.

The first equation of the above system is a symmetric (see Remark \ref{rmk 3.8}) Riccati matrix differential equation with terminal condition, and the second is a non-homogeneous linear matrix differential equation with terminal condition. 

Assuming that the Riccati equation has a unique solution on $[0,T]$, then the second equation for $\Pi_t$ is obviously also uniquely solvable on $[0,T]$, and the decoupling field $V(t,x)$ is properly defined. Next,
using \eqref{eq 019} we write the forward equation of \eqref{stoch_max_principle Volterra FBSDE 3} as

\begin{equation*}
    \begin{aligned}
\widehat{X}^{m,\xi}_t = \xi+ \int_0^t &\mathcal{K}(0,s)\big(A_s\mathcal{K}(s,0) \widehat{X}^{m,\xi}_s - B_s R_s^{-1} B_s^\top\mathcal{K}(0,s)^\top (P_s \widehat{X}^{m,\xi}_s+\Pi_s)\,\ud s\\& +\int_0^t      \mathcal{K}(0,s)\big(A_s M(m)_s+ C_s m_s\big)\,\ud s+ \int_0^t \widehat{\Sigma}_s\ud W_s,
    \end{aligned}
\end{equation*}

which is an affine equation with respect to $\widehat{X}^{m,\xi}_t$ that obviously has a unique solution $\widehat{X}^{m,\xi}_t\in \mathbb{S}^2([0,T];\mathbb{R}^d)$. Finally, one can directly verify via It\^o's formula (see for example \cite[Corollary 6.4]{medvegyev2007stochastic} ($V(t,x)\in C^{1,2}([0,T]\times\mathbb{R}^d;\mathbb{R}^d)$), and the terminal conditions for $P_T, \Pi_T$, that $V(t,\widehat{X}^{m,\xi}_t)$ satisfies the backward equation of \eqref{stoch_max_principle Volterra FBSDE 3}. In other words, we have found a solution of \eqref{stoch_max_principle Volterra FBSDE 3} on the whole $[0,T]$, where the backward component is expressed via an affine decoupling field with the help of the forward component. Hence, to get uniqueness for the solution of \eqref{stoch_max_principle Volterra FBSDE 3} we apply \cite[Proposition 4.8]{carmona2018probabilistic}.

\noindent \textbf{Step 3. (Well posedness of Riccati)}
What remains to be proved is that the symmetric Riccati equation 
\begin{equation*}
     \dot{P_t}+P_t \widehat{A}_t  + (\widehat{A}_t)^{\top} P_t - P_t\widehat{B}_t (\widehat{R}_t)^{-1} (\widehat{B}_t)^\top P_t +(\widehat{Q}_t+\widehat{\bar Q}_t)=0,\hspace{.2cm} P_T = \widehat{Q}+\widehat{\bar Q}
\end{equation*}
has a unique global solution on $[0,T]$. 
But this follows\footnote{We remind again Remark \ref{rmk 3.8}.} from Theorem \ref{thm: sym_rough_riccati} with the rough components equal to zero. 
\end{proof}

\begin{rmk}\label{rmk 3.8}
    It is immediate to see that \(\widehat{Q},\widehat{\bar Q},\widehat{Q}_t,\widehat{\bar Q}_t\succeq 0\). For $\widehat{R}_t$ we have $\widehat{R}_t\succ \lambda \big(\mathcal{K}(t,0)^\top\mathcal{K}(t,0)\big)$. Because $\mathcal{K}(t,0)$ is invertible and continuous over $[0,T]$, which is compact, there exists $C_{\mathcal{K},T}>0$ such that for every $x\in\mathbb{R}^d$ we have $C_{\mathcal{K},T}|x|\leq |\mathcal{K}(t,0)x|$. Hence, $\widehat{R}_t\succ (\lambda C_{\mathcal{K},T})\text{Id}_{d\times d}$.
\end{rmk}

\section{Solving the rough LQ MFG}\label{sec 3} 

\subsection{Connecting the Volterra and rough frameworks}
In this section we establish the equivalence between the Volterra and the rough settings. For this purpose we are going to restrict the space of admissible controls to
\begin{equation}\label{eq:Arough}
\begin{aligned}
    \mathcal A:=\bigg\{\alpha \in \widehat{\mathcal{A}}:  \sup_{t\in[0,T]}\mathbb{E}[|\alpha_t|^k]<\infty,\hspace{.1cm}\text{for every}\hspace{.1cm}k\in [2,\infty)\bigg\},
\end{aligned}
\end{equation}
while we also assume that $\Sigma \in  \mathbb{H}^2([0,T];\mathbb{R}^{d\times q})$ 
is such that $\sup_{t\in[0,T]}\mathbb{E}[|\Sigma_t|^k] < \infty$ for every $k\in [2,\infty)$.

Let $\pmb{\eta} \in \mathscr{C}^{0,\gamma}_g([0,T];\mathbb{R}^p), p\in\mathbb{N}$, with $\gamma\in(\frac{1}{3},\frac{1}{2})$ and 
 $0<\beta'\leq \beta\leq\gamma$ such that $\gamma+\beta+\beta'>1$. Recall from Theorem \ref{thm sol_rough_affine} that the coefficients $A^1, C^1$ in the state dynamics are taken to be controlled rough paths. Thus we impose:

\begin{enumerate}[label=(S\arabic*-R), ref=S\arabic*-R]
    \item\label{S1-R} There exists $\big({A}^{1},(A^1)'\big), \big({C}^{1},(C^1)'\big)\in \mathscr{D}^{\beta,\beta'}_{\eta}\big([0,T];L(\mathbb{R}^d,\mathbb{R}^{d\times p})\big)$.
\end{enumerate}

Turning to the choice of the Volterra kernel, (\ref{S1-R}) along with Theorem \ref{thm sol_rough_affine}  guarantee that, for any $(m,m')\in \mathscr{D}^{\beta,\beta'}_{\eta}\big([0,T];\mathbb{R}^d\big)$, the deterministic controlled rough paths  $\big(U^{\pmb{\eta}},(U^{\pmb{\eta}})'\big) \in \mathscr{D}^{\gamma,\beta}_{\eta}\big([0,T];\mathbb{R}^{d\times d}\big)$ and $\big(M^{\pmb{\eta},(m,m')},(M^{\pmb{\eta},(m,m')})'\big)\in\mathscr{D}^{\gamma,\beta}_{\eta}\big([0,T];\mathbb{R}^{d}\big)$
respectively solve the linear RDEs
\begin{align}\label{eq:KkernelRDE}
 U^{\pmb{\eta}}_{t\leftarrow s}&=\text{Id}_{d\times d} + \int_s^t A^1_r U^{\pmb{\eta}}_{r\leftarrow s}\ud\pmb{\eta}_r,\\
    M^{\pmb{\eta},(m,m')}_t&= \int_{0}^t \big(A^1_rM^{\pmb{\eta},(m,m')}_r+ C^{1}_rm_r  \big)\ud\pmb{\eta}_r.\nonumber
\end{align}
The matrix $U^{\pmb{\eta}}_{t\leftarrow s}$ is invertible for every $t\in [0,T]$ (this follows e.g. by applying Theorem \ref{rough_product_formula}). Its inverse $(U^{\pmb{\eta}}_{t\leftarrow s})^{-1} =:U^{\pmb{\eta}}_{s\leftarrow t}$, where $\big(U^{\pmb{\eta}}_{s\leftarrow t},(U^{\pmb{\eta}}_{s\leftarrow t})'\big) \in \mathscr{D}^{\gamma,\beta}_{\eta}\big([0,T];\mathbb{R}^{d\times d}\big)$ is the solution of
\begin{equation*}
     U^{\pmb{\eta}}_{s\leftarrow t}=\text{Id}_{d\times d} - \int_s^t U^{\pmb{\eta}}_{s\leftarrow r}A^1_r\ud\pmb{\eta}_r.
\end{equation*}

\begin{rmk}\label{rmk 3.12}
Note that, for every $s,t\in [0,T]$ with $s\leq t$ we have 
\begin{align*}
 (U^{\pmb{\eta}}_{t\leftarrow s})^\top&=\text{Id}_{d\times d} + \int_s^t (U^{\pmb{\eta}}_{r\leftarrow s})^\top(A^1_r)^{\top}\ud\pmb{\eta}_r,\\
    (U^{\pmb{\eta}}_{s\leftarrow t})^\top&=\text{Id}_{d\times d} - \int_s^t (A^1_r)^{\top} (U^{\pmb{\eta}}_{s\leftarrow r})^\top\ud\pmb{\eta}_r.
\end{align*}
\end{rmk}
We shall need the following instance of Duhamel's principle. From Theorem \ref{rough_product_formula}, applied in ${U}^{\pmb{\eta}}_{ 0\leftarrow t }M^{\pmb{\eta},(m,m')}_t$, we get that 
\begin{equation}\label{eq:MRDE}
{U}^{\pmb{\eta}}_{ 0\leftarrow t }M^{\pmb{\eta},(m,m')}_t = \int_0^t U^{\pmb{\eta}}_{0\leftarrow s }C^{1}_sm_s\ud\pmb{\eta}_s.   
\end{equation}

Now, let us define
\begin{equation}\label{eq:RoughVolterraKerneldef}
    \mathcal{K}^{\pmb{\eta},(A^1,(A^1)')}(t,s):= {U}^{\pmb{\eta}}_{t\leftarrow 0}{U}^{\pmb{\eta}}_{0\leftarrow s },
\end{equation}
for every $t,s\in [0,T]$. 
Since $(A^1,(A^1)')$ is fixed throughout this section, we shall omit it from our notation and write $\mathcal{K}^{\pmb{\eta},(A^1,(A^1)')}(\cdot,\cdot)$ instead of $\mathcal{K}^{\pmb{\eta}}(\cdot,\cdot)$.

\begin{rmk}\label{rem:Kernelchoice} The Volterra kernel $\mathcal{K}^{\pmb{\eta}}$ defined above satisfies Assumption \ref{S1}. This can be verified directly from the flow and continuity properties of the linear RDE \ref{eq:KkernelRDE}.
\end{rmk}

We are now ready to state the desired equivalence result for the state processes via Duhamel's principle/variation of constants.

\begin{thm}\label{thm equivalence}
Let (\ref{S1-R}), (\ref{S2}) hold true, $\alpha\in\mathcal{A}$ and $(m,m')\in \mathscr{D}^{\beta,\beta'}_{\eta}\big([0,T];\mathbb{R}^d\big)$. Furthermore, let  ${{X}}^{\pmb{\eta},\alpha,(m,m'),\xi}$ be the unique solution of RSDE \eqref{eq rough_affineI} and $\widetilde{{X}}^{\pmb{\eta},\alpha,(m,m'),\xi}$ the unique solution of Volterra SDE \eqref{eq:rough_state_2}, where now $M(m)_t$ is replaced by $M^{\pmb{\eta},(m,m')}_t$ and $\mathcal{K}(t,s)$ is replaced by $\mathcal{K}^{\pmb{\eta}}(t,s)$.
\footnote{For the well posedness of \eqref{eq rough_affineI} and \eqref{eq:rough_state_2} see Theorem \ref{thm sol_rough_affine} and Remark \ref{rmk 3.2}.} Then, for every $t\in[0,T]$, we have
\begin{equation*}
\mathbb{P}\big[{X}^{\pmb{\eta},\alpha,(m,m'),\xi}_t=\widetilde{{X}}^{\pmb{\eta},\alpha,(m,m'),\xi}_t\big]=1.
\end{equation*}
\end{thm}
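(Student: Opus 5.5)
The plan is to show that the rough solution $X:={X}^{\pmb{\eta},\alpha,(m,m'),\xi}$ satisfies the Volterra equation \eqref{eq:rough_state_2} with $\mathcal{K}^{\pmb{\eta}}$ and $M^{\pmb{\eta},(m,m')}$. Uniqueness for \eqref{eq:rough_state_2} (Remark \ref{rmk 3.2}, valid since $\mathcal{K}^{\pmb{\eta}}$ satisfies (\ref{S1}) by Remark \ref{rem:Kernelchoice}) then gives $X_t=\widetilde{X}_t$ a.s.\ for every $t$. To see this, I will multiply by the inverse flow and consider
\[
\widehat X_t := U^{\pmb{\eta}}_{0\leftarrow t}\big(X_t - M^{\pmb{\eta},(m,m')}_t\big).
\]
This is the same transformation as in Remark \ref{rmk 3.2}, but now carried out on the rough side.

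\textbf{Step 1.} Set $D_t:=X_t-M^{\pmb{\eta},(m,m')}_t$. Subtracting the RDE for $M$ from the RSDE \eqref{eq rough_affineI} gives
\[
D_t=\xi+\int_0^t(A_sX_s+B_s\alpha_s+C_sm_s)\,\ud s+\int_0^t\Sigma_s\,\ud W_s+\int_0^t A^1_s D_s\,\ud\pmb{\eta}_s .
\]
The $C^1m$ terms cancel, and the integrand $(A^1D, (A^1)'D+A^1A^1D)$ is a stochastic controlled rough path by Lemma \ref{productest}. It lies in $\textbf{D}^{\gamma,\beta}_\eta\mathbb{L}^k$ for all $k$ by Theorem \ref{thm sol_rough_affine}, together with the deterministic regularity of $M$.

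\textbf{Step 2.} Apply the rough stochastic product formula (Theorem \ref{rough_product_formula}) to $U^{\pmb{\eta}}_{0\leftarrow t}D_t$, using the equation $\ud U_{0\leftarrow t}=-U_{0\leftarrow t}A^1_t\,\ud\pmb{\eta}_t$ (deterministic, no bracket with $W$). The rough terms should cancel:
\[
-U_{0\leftarrow t}A^1_tD_t\,\ud\pmb{\eta}+U_{0\leftarrow t}A^1_tD_t\,\ud\pmb{\eta}=0 .
\]
This leaves
\[
\widehat X_t=\xi+\int_0^t U_{0\leftarrow s}(A_sX_s+B_s\alpha_s+C_sm_s)\,\ud s+\int_0^tU_{0\leftarrow s}\Sigma_s\,\ud W_s .
\]
The main obstacle is this step. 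One must check the hypotheses of the product rule: moment and regularity conditions on $D$ and $D'$ in $\textbf{D}^{\gamma,\beta}_\eta\mathbb{L}^k$ with $\gamma+\beta+\beta'>1$, and the second-order cross term $U'D'\,\ud[\pmb{\eta}]$. Because $\pmb{\eta}$ is geometric, the bracket term vanishes or is absorbed, and the Gubinelli derivatives $-U_{0\leftarrow\cdot}A^1$ and $A^1D$ cancel at second order as well. If the product formula is stated only for particular integrability classes, I would use $\alpha\in\mathcal{A}$ and the moment bounds on $\Sigma$ assumed at the start of this section to supply all $k$-moments.

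\textbf{Step 3.} Multiply by $U_{t\leftarrow0}$ and use $\mathcal{K}^{\pmb{\eta}}(t,s)=U_{t\leftarrow0}U_{0\leftarrow s}$. Since $U_{t\leftarrow 0}$ is deterministic, it can be moved inside the Lebesgue and It\^o integrals. This gives
\[
X_t=\mathcal{K}^{\pmb{\eta}}(t,0)\xi+M^{\pmb{\eta},(m,m')}_t+\int_0^t\mathcal{K}^{\pmb{\eta}}(t,s)(A_sX_s+B_s\alpha_s+C_sm_s)\,\ud s+\int_0^t\mathcal{K}^{\pmb{\eta}}(t,s)\Sigma_s\,\ud W_s
\]
a.s.\ for each $t$, which is \eqref{eq:rough_state_2}. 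Finally, $X$ is continuous and adapted with finite second moments in $\sup$, hence in $\mathbb{S}^2$. Uniqueness in Remark \ref{rmk 3.2} then concludes the proof.
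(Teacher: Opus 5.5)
Your proposal is correct and is essentially the paper's argument. The paper applies the rough product formula (Theorem~\ref{rough_product_formula}) to $U^{\pmb{\eta}}_{0\leftarrow t}X_t$ and then uses the Duhamel identity \eqref{eq:MRDE} for $U^{\pmb{\eta}}_{0\leftarrow t}M^{\pmb{\eta},(m,m')}_t$; you subtract $M$ first and apply the formula to $U^{\pmb{\eta}}_{0\leftarrow t}(X_t-M_t)$, which is the same computation by linearity. One small precision: it is $(D,A^1D)$ that lies in $\mathbf{D}^{\gamma,\beta}_\eta\mathbb{L}^k$, while the integrand pair $(A^1D,(A^1)'D+A^1A^1D)$ lies in $\mathbf{D}^{\beta,\beta'}_\eta\mathbb{L}^k$, and this is exactly what the product rule needs given $\gamma+\beta+\beta'>1$.
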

\begin{proof}
Direct application of the rough product formula, Theorem \ref{rough_product_formula}, to the product ${U}^{\pmb{\eta}}_{ 0\leftarrow t }{X}^{\pmb{\eta},\alpha,(m,m'),\xi}_t$ with ${U}^{\pmb{\eta}}_{ 0\leftarrow t }$ satisfying \eqref{eq:MRDE}.
\end{proof}

\subsection{The rough linear-quadratric stochastic control problem}
We remind the reader that our goal is to solve the rough LQ MFG problem \eqref{eq rough_affineI}-\eqref{eq:rConsistency}. In view of Theorem \ref{thm equivalence} we see that the analysis and the SMP we derived in Section \ref{sec:LQVolterraControl} can be applied directly to this purpose. So, let us first give the following definition.
\begin{defn}
Under (\ref{S1-R}), for every $(m,m')\in \mathscr{D}^{\beta,\beta'}_{\eta}\big([0,T];\mathbb{R}^d\big)$, we have (see Lemma \ref{productest})
\begin{gather}\label{eq:ZetaControlledPathDef}
\big(Z^{\pmb{\eta},(m,m')},(Z')^{\pmb{\eta},(m,m')}\big):=\nonumber\\\Big(\mathcal{K}^{\pmb{\eta}}(0,\cdot)C^1 m\;,\;\mathcal{K}^{\pmb{\eta}}(0,\cdot)'C^1m+\mathcal{K}^{\pmb{\eta}}(0,\cdot)(C^1)'m+\mathcal{K}^{\pmb{\eta}}(0,\cdot)C^1m'\Big)\in \mathscr{D}^{\beta,\beta'}_{\eta}\big([0,T];\mathbb{R}^{p\times d}\big).
\end{gather}
In turn, we define  
\begin{equation}\label{eq:MroughIntegral}
         M^{\pmb{\eta},(A^1,(A^1)'),(C^1,(C^1)')}\big((m,m')\big)_t:= \mathcal{K}^{\pmb{\eta}}(t,0)\int_{0}^t\big(Z^{\pmb{\eta},(m,m')},(Z')^{\pmb{\eta},(m,m')}\big)_s\ud \pmb{\eta}_s.
    \end{equation}
\end{defn}
Since $(A^1,(A^1)'),(C^1,(C^1)')$ are fixed in this section, for ease of notation, we shall denote $ M^{\pmb{\eta},(A^1,(A^1)'),(C^1,(C^1)')}(\cdot)$ by $ M^{\pmb{\eta}}(\cdot)$.

Hence, for fixed $(m,m')\in \mathscr{D}^{\beta,\beta'}_{\eta}\big([0,T];\mathbb{R}^d\big)$ and $\xi\in \bigcap_{k=2}^{\infty}\mathbb{L}^k(\mathcal{F}_0,\mathbb{R}^d)$, under (\ref{S1-R}), (\ref{S2}) and (\ref{S3}), from the Volterra formulation where $M(m)$ is replaced from $M^{\pmb{\eta}}\big((m,m')\big)$, the rough stochastic optimal control problem \eqref{eq rough_affineI}-\eqref{eq:rough_costI} has a unique solution. Furthermore, by Theorem \ref{thm 3.9} and Theorem \ref{thm equivalence}, for the optimal state and optimal control $\big(\alpha^{*,(m,m'),\xi},X^{(m,m'),\xi}\big)$, of the rough stochastic optimal control problem \eqref{eq rough_affineI}-\eqref{eq:rough_costI} we get 
the Volterra representations
\begin{equation}\label{stoch_max_principle rough FBSDE}
\left\{\begin{aligned}
&X^{(m,m'),\xi}_t = \mathcal{K}^{\pmb{\eta}}(t,0)\xi+M^{\pmb{\eta}}\big((m,m')\big)_t \\&\quad\quad\quad+ \int_0^t \mathcal{K}^{\pmb{\eta}}(t,s)\big(A_s X^{(m,m'),\xi}_s - B_s R_s^{-1} B_s^\top Y^{(m,m'),\xi}_s + C_s m_s\big)\,\ud s
   + \int_0^t \mathcal{K}^{\pmb{\eta}}(t,s)\Sigma_s\ud W_s,\\&
Y^{(m,m'),\xi}_t
 = \mathcal{K}^{\pmb{\eta}}(T,t)^\top\big((Q+\bar Q) X^{(m,m'),\xi}_T - \bar Q Sm_T\big) \\&\quad\quad\quad\quad +\int_t^T \mathcal{K}^{\pmb{\eta}}(s,t)^\top\Big((Q_s+\bar Q_s) X^{(m,m'),\xi}_s - \bar Q_s S_s m_s+A_s^{\top} Y^{(m,m'),\xi}_s\Big) \ud s 
  \\&\quad\quad\quad\quad\quad -\int_t^T \mathcal{K}^{\pmb{\eta}}(s,t)^\top Z^{(m,m'),\xi}_s\ud W_s,\\&
   \alpha^{*,(m,m'),\xi}_t = - R_t^{-1} B_t^\top Y^{(m,m'),\xi}_t.
\end{aligned}\right.
\end{equation}

\begin{rmk}\label{rmk 3.10}
It is important to note that 
\begin{equation*}
    \sup_{t\in[0,T]}\mathbb{E}\Big[\big|X^{(m,m'),\xi}_t\big|^k\Big] +  \sup_{t\in[0,T]}\mathbb{E}\Big[\big|Y^{(m,m'),\xi}_t\big|^k\Big] +  \sup_{t\in[0,T]}\mathbb{E}\Big[\big|\alpha^{*,(m,m'),\xi}_t\big|^k\Big] < \infty, \hspace{.2cm}\text{for every}\hspace{.1cm}k\in[2,\infty).
\end{equation*}
Indeed, by integrability of $\xi$ and the fact that the transformed problem (recall \eqref{defn 2.8}) is solved through a (deterministic) decoupling field, standard SDE estimates, see e.g. \cite[Theorem 3.4.3]{zhang2017backward}, imply 
\begin{equation*}
    \sup_{t\in[0,T]}\mathbb{E}\Big[\big|\widehat{X}^{(m,m'),\xi}_t\big|^k\Big] +  \sup_{t\in[0,T]}\mathbb{E}\Big[\big|\widehat{Y}^{(m,m'),\xi}_t\big|^k\Big]  < \infty, \hspace{.2cm}\text{for every}\hspace{.1cm}k\in[2,\infty).
\end{equation*}
Hence, the desired result follows.
\end{rmk}

We close this section by showing that the SMP \eqref{stoch_max_principle rough FBSDE} obtained from the Volterra formulation for the rough stochastic optimal control problem \eqref{eq rough_affineI}-\eqref{eq:rough_costI} can also be written in a form of rough FBSDE, while we also obtain the rough analogue of Theorem \ref{thm 3.9}.
\begin{thm}\label{thm sol_rough_FBSDE}
Let (\ref{S1-R}), (\ref{S2}), (\ref{S3}) hold true. Then, for every $(m,m')\in \mathscr{D}^{\beta,\beta'}_{\eta}\big([0,T];\mathbb{R}^d\big)$ and $\xi\in \bigcap_{k=2}^{\infty}\mathbb{L}^k(\mathcal{F}_0,\mathbb{R}^d)$, the rough FBSDE
\begin{equation}\label{eq:rFBSDE}
    \left\{\begin{aligned}
&X^{(m,m'),\xi}_t = \xi+ \int_0^t \big(A^1_s X^{(m,m'),\xi}_s+C^1_s m_s\big)\ud\pmb{\eta}_s\\&\quad\quad\quad\quad+ \int_0^t \big(A_s X^{(m,m'),\xi}_s - B_s R_s^{-1} B_s^\top Y^{(m,m'),\xi}_s + C_s m_s\big)\,\ud s
   + \int_0^t \Sigma_s\ud W_s,\\&
Y^{(m,m'),\xi}_t
 = (Q+\bar Q) X^{(m,m'),\xi}_T - \bar Q Sm_T +\int_t^T (A^1_s)^\top Y^{(m,m'),\xi}_s \ud\pmb{\eta}_s \\&\quad\quad\quad\quad+\int_t^T \big((Q_s+\bar Q_s) X^{(m,m'),\xi}_s - \bar Q_s S_s m_s+A_s^{\top} Y^{(m,m'),\xi}_s\big) \ud s 
   -\int_t^T  Z^{(m,m'),\xi}_s\ud W_s,
\end{aligned}\right.
\end{equation}
has a unique solution 
\begin{gather*}
\Big(\big(X^{(m,m'),\xi},(X^{(m,m'),\xi})'\big),\big(Y^{(m,m'),\xi},(Y^{(m,m'),\xi})'\big),Z^{(m,m'),\xi}\Big)\\\in \bigcap_{k =2}^\infty\Big(\textbf{D}^{\gamma,\beta}_{\eta}\mathbb{L}^k\big([0,T];\mathbb{R}^d\big)\times \textbf{D}^{\gamma,\beta}_{\eta}\mathbb{L}^k\big([0,T];\mathbb{R}^d\big)\times\mathbb{H}^k([0,T];\mathbb{R}^{d\times q})\Big).
\end{gather*}
with
\begin{equation*}  \sup_{t\in[0,T]}\mathbb{E}\big[|Z^{(m,m'),\xi}_t|^k\big]<\infty, \hspace{.2cm}\text{for every}\hspace{.1cm}k\in[2,\infty).
\end{equation*}
Furthermore, the solution of \eqref{eq:rFBSDE} almost surely coincides with the one of \eqref{stoch_max_principle rough FBSDE}.
\end{thm}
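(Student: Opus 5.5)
Existence comes from the Volterra solution. Uniqueness comes from running the transformation backwards. Fix $(m,m')$ and $\xi$, and let $(X,Y,Z)$ be the solution of \eqref{stoch_max_principle rough FBSDE} given by Theorem \ref{thm 3.9}, with $\mathcal K=\mathcal K^{\pmb\eta}$ and $M=M^{\pmb\eta}((m,m'))$. Set $\widehat Y_t=\mathcal K^{\pmb\eta}(t,0)^\top Y_t$ and $\widehat Z_t=\mathcal K^{\pmb\eta}(t,0)^\top Z_t$; this is exactly \eqref{defn 2.8}.

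\textbf{Forward equation.} The forward component is a Volterra SDE driven by the control $\alpha^*=-R^{-1}B^\top Y$. By Remark \ref{rmk 3.10}, $\alpha^*\in\mathcal A$. Theorem \ref{thm equivalence} then identifies $X$ with the unique solution of the RSDE in the first line of \eqref{eq:rFBSDE}. By Theorem \ref{thm sol_rough_affine}, $(X,A^1X+C^1m)\in\bigcap_k\mathbf D^{\gamma,\beta}_\eta\mathbb L^k$.

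\textbf{Backward equation.} By Theorem \ref{thm 3.9}, $\widehat Y_t=P_t\widehat X_t+\Pi_t$ with $P,\Pi\in C^1$. Here $\widehat X$ is a classical Itô process with $k$-integrable drift and diffusion $\widehat\Sigma=\mathcal K(0,\cdot)\Sigma$. Hence $\widehat Y$ satisfies a classical BSDE with $\widehat Z_t=P_t\widehat\Sigma_t$. This shows $\sup_t\mathbb E|\widehat Z_t|^k<\infty$, so $\sup_t\mathbb E|Z_t|^k<\infty$, since $\mathcal K$ is bounded and invertible.

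Next, write $Y_t=(U^{\pmb\eta}_{0\leftarrow t})^\top\widehat Y_t$. By Remark \ref{rmk 3.12}, $(U^{\pmb\eta}_{0\leftarrow \cdot})^\top$ is a deterministic controlled path satisfying a linear RDE with coefficient $-(A^1)^\top$. Apply the rough stochastic product rule, Theorem \ref{rough_product_formula}, to this product of a deterministic controlled rough path and an Itô process with $k$-integrable coefficients. This gives
\[
\ud Y_t=-(A^1_t)^\top Y_t\,\ud\pmb\eta_t-\big((Q_t+\bar Q_t)X_t-\bar Q_tS_tm_t+A_t^\top Y_t\big)\ud t+Z_t\,\ud W_t .
\]
Integrating from $t$ to $T$ with $Y_T=(Q+\bar Q)X_T-\bar QSm_T$ yields the backward line of \eqref{eq:rFBSDE}. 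The product formula also gives $(Y,-(A^1)^\top Y)\in\mathbf D^{\gamma,\beta}_\eta\mathbb L^k$, via Lemma \ref{productest} and Proposition \ref{prop:roughinteg}. The $\mathbb H^k$ bound on $Z$ follows from the uniform moment bound.

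\textbf{Main obstacle.} The delicate point is checking the integrability hypotheses of Theorem \ref{rough_product_formula}. In particular, we need moments of all orders for the Itô integrand $\widehat Z$, which Remark \ref{rmk> 3.8} warns are not automatic. Here they come for free from the deterministic decoupling field $P$: $\widehat Z=P\widehat\Sigma$ and $\sup_t\mathbb E|\Sigma_t|^k<\infty$.

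\textbf{Uniqueness.} Let $(\tilde X,\tilde Y,\tilde Z)$ be any solution of \eqref{eq:rFBSDE} in the stated class. Apply Theorem \ref{rough_product_formula} to $U^{\pmb\eta}_{0\leftarrow t}\tilde X_t$ (as in Theorem \ref{thm equivalence}) and to $(U^{\pmb\eta}_{t\leftarrow 0})^\top\tilde Y_t$. This is allowed now because we assume $\sup_t\mathbb E|\tilde Z_t|^k<\infty$ and controlledness of $\tilde Y$. Using $\mathcal K^{\pmb\eta}(s,t)^\top=(U^{\pmb\eta}_{0\leftarrow t})^\top (U^{\pmb\eta}_{s\leftarrow 0})^\top$, the rough terms cancel. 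So $(\tilde X,\tilde Y,\tilde Z)$ solves the Volterra system \eqref{stoch_max_principle rough FBSDE}, equivalently the classical system \eqref{stoch_max_principle Volterra FBSDE 3}. Its uniqueness in $\mathbb S^2\times\mathbb S^2\times\mathbb H^2$ (Theorem \ref{thm 3.9}) forces $\tilde X=X$ and $\tilde Y=Y$ a.s., and $\tilde Z=Z$ $\ud t\otimes\ud\mathbb P$-a.e. This proves both uniqueness and the claimed coincidence with \eqref{stoch_max_principle rough FBSDE}.
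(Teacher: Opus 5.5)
Your proposal is correct and follows essentially the paper's proof. Existence takes the Volterra solution of Theorem~\ref{thm 3.9}, identifies the forward part via Theorem~\ref{thm equivalence}, and uses the deterministic decoupling field to supply the moments of $Z$ required by the rough product rule (Theorem~\ref{rough_product_formula}); uniqueness transforms a rough solution back to the Volterra system and uses uniqueness there. The only difference is bookkeeping: you apply the product rule once to $(U^{\pmb\eta}_{0\leftarrow t})^\top\widehat Y_t$, where $\widehat Y$ is already a classical It\^o process, whereas the paper applies it to each factor of the affine ansatz and checks the drift by hand through the Riccati equations, so your version is slightly more economical.
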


\begin{proof}
First we are going to establish existence of such a solution. From the discussion above, and specifically Theorem \ref{thm 3.9}, there exist unique processes $X^{(m,m'),\xi}, Y^{(m,m'),\xi},$ $(Z^1)^{(m,m'),\xi}$, with the appropriate integrabilities for $X^{(m,m'),\xi}, Y^{(m,m'),\xi},$ (see Remark \ref{rmk 3.10}) that solve \eqref{stoch_max_principle rough FBSDE}. We are going to show that $X^{(m,m'),\xi}, Y^{(m,m'),\xi}$ are also part of a solution for \eqref{eq:rFBSDE} along with an appropriate $Z^{(m,m'),\xi}$.

Keeping $ Y^{(m,m'),\xi}$ fixed, we consider the decoupled Volterra and RSDEs, respectively given by 
\begin{equation*}
\begin{aligned}
        &X^{(m,m'),\xi}_t = \mathcal{K}^{\pmb{\eta}}(t,0)\xi+M^{\pmb{\eta}}\big((m,m')\big)_t \\&\hspace{2cm}+ \int_0^t \mathcal{K}^{\pmb{\eta}}(t,s)\big(A_s X^{(m,m'),\xi}_s - B_s R_s^{-1} B_s^\top Y^{(m,m'),\xi}_s + C_s m_s\big)\,\ud s
   + \int_0^t \mathcal{K}^{\pmb{\eta}}(t,s)\Sigma_s\ud W_s,\\&
   \widetilde{X}^{(m,m'),\xi}_t = \xi+ \int_0^t \big(A^1_s \widetilde{X}^{(m,m'),\xi}_s+C^1_s m_s\big)\ud\pmb{\eta}_s\\&\hspace{4cm}+ \int_0^t \big(A_s \widetilde{X}^{(m,m'),\xi}_s - B_s R_s^{-1} B_s^\top Y^{(m,m'),\xi}_s + C_s m_s\big)\,\ud s
   + \int_0^t \Sigma_s\ud W_s.
\end{aligned}
\end{equation*}
From Theorem \ref{thm sol_rough_affine} both equations admit unique solutions and an application of Theorem \ref{thm equivalence} yields $\mathbb{P}\big[X^{(m,m'),\xi}=\widetilde{X}^{(m,m'),\xi}\big]=1$.

Now let us argue for the backward component. From the proof of Theorem \ref{thm 3.9}, there exist $P^{\pmb{\eta}}_t, \Pi^{\pmb{\eta},(m,m')}_t$ solutions of \eqref{eq symmetric_Riccati}, recall \eqref{defn 3.8}, such that 
\begin{gather}\label{eq:ansatzRFBSDE}
    Y^{(m,m'),\xi}_t = \mathcal{K}^{\pmb{\eta}}(0,t)^\top P^{\pmb{\eta}}_t  \mathcal{K}^{\pmb{\eta}}(0,t)\big({X}^{(m,m'),\xi}_t -M^{\pmb{\eta}}\big((m,m')\big)_t\big) + \mathcal{K}^{\pmb{\eta}}(0,t)^\top\Pi^{\pmb{\eta},(m,m')}_t.
\end{gather}
For the rest of the proof we are going to denote $P^{\pmb{\eta}}, \Pi^{\pmb{\eta},(m,m')}$ by $P,\Pi$ for ease of notation.
We will apply Theorem \ref{rough_product_formula} several times in order to see that $ Y^{(m,m'),\xi}$ has the desired dynamics. Indeed, we have
\begin{gather*}
    \mathcal{K}^{\pmb{\eta}}(0,t)^\top P_t = P_0 - \int_{0}^t \mathcal{K}^{\pmb{\eta}}(0,s)^\top\big(P_s \widehat{A}_s  + (\widehat{A}_s)^{\top} P_s - P_s\widehat{B}_s (\widehat{R}_s)^{-1} (\widehat{B}_s)^\top P_s +(\widehat{Q}_t+\widehat{\bar Q}_s)\big) \ud s\\
    - \int_0^t (A^1_s)^{\top}\mathcal{K}^{\pmb{\eta}}(0,s)^\top P_s \ud\pmb{\eta}_s\\
     \mathcal{K}^{\pmb{\eta}}(0,t)^\top \Pi_t = \Pi_0-\int_0^t \mathcal{K}^{\pmb{\eta}}(0,s)^\top\Big(\big(P_s+(\widehat{A}_s)^{\top}-\widehat{B}_s (\widehat{R}_s)^{-1} (\widehat{B}_s)^\top \big)\Pi_s +P_s \widehat{D}^{(m,m')}_s \\+(\widehat{Q}_s+\widehat{\bar Q}_s)\mathcal{K}^{\pmb{\eta}}(0,s)M^{\pmb{\eta}}\big((m,m')\big)_s- \widehat{\bar Q}_s\mathcal{K}^{\pmb{\eta}}(0,s) S_s m_s\Big)\ud s
    - \int_0^t (A^1_s)^{\top}\mathcal{K}^{\pmb{\eta}}(0,s)^\top \Pi_s \ud\pmb{\eta}_s,
    \end{gather*}
and
    \begin{equation*}
        \begin{aligned}
             &\mathcal{K}^{\pmb{\eta}}(0,t)^\top P_t  \mathcal{K}^{\pmb{\eta}}(0,t)\big({X}^{(m,m'),\xi}_t
    -M^{\pmb{\eta}}\big((m,m')\big)_t\big)= P_0\xi \\&-\int_0^t  (A^1_s)^{\top}\mathcal{K}^{\pmb{\eta}}(0,s)^\top P_s\mathcal{K}^{\pmb{\eta}}(0,s)\big({X}^{(m,m'),\xi}_s
    -M^{\pmb{\eta}}\big((m,m')\big)_s\big) \ud\pmb{\eta}_s\\&
    -\int_{0}^t \bigg[\Big(\mathcal{K}^{\pmb{\eta}}(0,s)^\top\big(P_s \widehat{A}_s  + (\widehat{A}_s)^{\top} P_s - P_s\widehat{B}_s (\widehat{R}_s)^{-1} (\widehat{B}_s)^\top P_s \\&+(\widehat{Q}_t+\widehat{\bar Q}_s)\big)  \mathcal{K}^{\pmb{\eta}}(0,s)\big({X}^{(m,m'),\xi}_s
    -M^{\pmb{\eta}}\big((m,m')\big)_s\big) \Big)\\&
    - \mathcal{K}^{\pmb{\eta}}(0,s)^\top P_s \Big(\widehat{A}_s \mathcal{K}^{\pmb{\eta}}(0,s)\big({X}^{(m,m'),\xi}_s
    -M^{\pmb{\eta}}\big((m,m')\big)_s\big) \\&\quad\quad -\widehat{B}_s (\widehat{R}_s)^{-1} (\widehat{B}_s)^\top \mathcal{K}^{\pmb\eta}(s,0)^\top Y^{(m,m'),\xi}_s +\widehat{D}^{(m,m')}_s\Big)\bigg]\,\ud s\\&
    + \int_{0}^t  \mathcal{K}^{\pmb{\eta}}(0,s)^\top P_s \mathcal{K}^{\pmb{\eta}}(0,s)\Sigma_s\ud W_s.
        \end{aligned}
    \end{equation*}
Keeping \eqref{eq:ansatzRFBSDE} in mind, we calculate the Riemann integral
\begin{equation*}
\begin{aligned}
     \int_{0}^t &\bigg[\Big(\mathcal{K}^{\pmb{\eta}}(0,s)^\top\big(P_s \widehat{A}_s  + (\widehat{A}_s)^{\top} P_s - P_s\widehat{B}_s (\widehat{R}_s)^{-1} (\widehat{B}_s)^\top P_s \\&+(\widehat{Q}_t+\widehat{\bar Q}_s)\big)  \mathcal{K}^{\pmb{\eta}}(0,s)\big({X}^{(m,m'),\xi}_s
    -M^{\pmb{\eta}}\big((m,m')\big)_s\big) \Big)\\&
    - \mathcal{K}^{\pmb{\eta}}(0,s)^\top P_s \Big(\widehat{A}_s \mathcal{K}^{\pmb{\eta}}(0,s)\big({X}^{(m,m'),\xi}_s
    -M^{\pmb{\eta}}\big((m,m')\big)_s\big) \\&- \widehat{B}_s (\widehat{R}_s)^{-1} (\widehat{B}_s)^\top \mathcal{K}^{\pmb\eta}(s,0)^\top Y^{(m,m'),\xi}_s +\widehat{D}^{(m,m')}_s\Big)\bigg]\,\ud s\\&
    + \int_0^t \mathcal{K}^{\pmb{\eta}}(0,s)^\top\Big(\big(P_s+(\widehat{A}_s)^{\top}-\widehat{B}_s (\widehat{R}_s)^{-1} (\widehat{B}_s)^\top \big)\Pi_s +P_s \widehat{D}^{(m,m')}_s \\&+(\widehat{Q}_s+\widehat{\bar Q}_s)\mathcal{K}^{\pmb{\eta}}(0,s)M^{\pmb{\eta}}\big((m,m')\big)_s- \widehat{\bar Q}_s\mathcal{K}^{\pmb{\eta}}(0,s) S_s m_s\Big)\ud s\\&
    = \int_0^t \big((Q_s+\bar Q_s) X^{(m,m'),\xi}_s - \bar Q_s S_s m_s+A_s^{\top} Y^{(m,m'),\xi}_s\big)\ud s.
\end{aligned}
\end{equation*}
In view of the previous displays we deduce
\begin{gather*}
     Y^{(m,m'),\xi}_t= (Q+\bar Q) X^{(m,m'),\xi}_T - \bar Q Sm_T +\int_t^T (A^1_s)^\top Y^{(m,m'),\xi}_s \ud\pmb{\eta}_s -  \int_{t}^T \mathcal{K}^{\pmb{\eta}}(0,s)^\top P_s \mathcal{K}^{\pmb{\eta}}(0,s)\Sigma_s\ud W_s\\
    + \int_t^T \big((Q_s+\bar Q_s) X^{(m,m'),\xi}_s - \bar Q_s S_s m_s+A_s^{\top} Y^{(m,m'),\xi}_s\big)\ud s.
\end{gather*}
The existence proof is thus complete upon letting $Z^{(m,m'),\xi}_t :=  \mathcal{K}^{\pmb{\eta}}(0,t)^\top P_t \mathcal{K}^{\pmb{\eta}}(0,t)\Sigma_t.$
Additionally, we immediately get that 
\begin{equation*}
\begin{aligned}
    (X^{(m,m'),\xi})'&= A^1 {X}^{(m,m'),\xi} + C^1 m,\\
    (Y^{(m,m'),\xi})'&= -(A^1)^\top {Y}^{(m,m'),\xi}.
    \end{aligned}
\end{equation*}

\noindent Turning to uniqueness, let $$\Big(\big(X^{(m,m'),\xi},(X^{(m,m'),\xi})'\big),\big(Y^{(m,m'),\xi},(Y^{(m,m'),\xi})'\big),Z^{(m,m'),\xi}\Big)$$ solve \eqref{eq:rFBSDE} with the properties described in the statement of the theorem. By repeating the procedure, from the other direction, i.e. forward component first and then backward, with the help of Theorem \ref{rough_product_formula}, we get that $\big(X^{(m,m'),\xi},Y^{(m,m'),\xi},\mathcal{K}^{\pmb{\eta}}(\cdot,\cdot)^\top Z^{(m,m'),\xi}\big)$ is a solution of \eqref{stoch_max_principle Volterra FBSDE}. Notice that we are allowed to do that because, by definition, the solution space imposes sufficient integrability properties. In particular, $Z^{(m,m'),\xi}$ is sufficiently integrable in order to apply Theorem \ref{rough_product_formula}. Hence, from the uniqueness part of Theorem \ref{thm 3.9}, uniqueness of \eqref{eq:rFBSDE} follows.
\end{proof}

\subsection{Equivalence of rough fixed point step with FBRDE}
Combining Theorem \ref{thm sol_rough_FBSDE}, which gives us the rough SMP of 
\eqref{eq rough_affineI}-\eqref{eq:rough_costI}, with consistency condition \eqref{eq:rConsistency}, we immediately obtain the following result.
\begin{thm}\label{thm:VolterraEquivalence} Let (\ref{S1-R}), (\ref{S2}), (\ref{S3}) hold true. Then, the rough LQ MFG problem \eqref{eq rough_affineI}-\eqref{eq:rConsistency} has a unique solution if and only if the rough McKean-Vlasov FBSDE \eqref{eq 0001} has a unique solution. 
\end{thm}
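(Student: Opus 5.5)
The plan is to set up a bijection between solutions of the two problems, using Theorem \ref{thm sol_rough_FBSDE} as the bridge. It says that, for each fixed $(m,m')\in\mathscr{D}^{\beta,\beta'}_{\eta}$, the optimal pair of the rough control problem \eqref{eq rough_affineI}--\eqref{eq:rough_costI} is exactly $(X^{(m,m'),\xi},\alpha^{*}=-R^{-1}B^\top Y^{(m,m'),\xi})$. Here $(X,Y,Z)$ is the unique solution of the rough FBSDE \eqref{eq:rFBSDE}, and this FBSDE is a.s.\ equal to the Volterra system \eqref{stoch_max_principle rough FBSDE}. The two implications are then as follows.

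\emph{From MFG to McKean--Vlasov.} Let $((m,m'),\alpha^*)$ solve the rough LQ MFG, so that \eqref{eq:rConsistency} holds. Take the associated solution $(X,Y,Z)$ of \eqref{eq:rFBSDE}. Because $m_t=\mathbb{E}[X_t]$, we may replace $m$ by $\mathbb{E}[X]$ in every drift and terminal term. The rough integrand $A^1X+C^1m$ needs more care. Its Gubinelli derivative is $(A^1)'X+A^1(A^1X+C^1m)+(C^1)'m+C^1m'$, and this must match the derivative of the controlled path $A^1X+C^1\mathbb{E}[X]$ that defines the rough integral in \eqref{eq 0001}. That requires $m'$ to coincide with the Gubinelli derivative of $t\mapsto\mathbb{E}[X_t]$.

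To check this, take expectations in the rough SDE. This uses that rough stochastic integrals against deterministic $\pmb\eta$ commute with $\mathbb{E}$ thanks to the $L^k$ Riemann-sum limit in Proposition \ref{prop:roughinteg}, so $\mathbb{E}[X]$ is a deterministic controlled path with derivative $(A^1+C^1)\mathbb{E}[X]$. The second identity in \eqref{eq:rConsistency} is precisely this. Hence $(X,Y,Z)$ solves \eqref{eq 0001}.

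\emph{From McKean--Vlasov to MFG.} Let $(X,Y,Z)$ solve \eqref{eq 0001} in the class of Theorem \ref{thm sol_rough_FBSDE}. Set $m:=\mathbb{E}[X]$ and $m':=(A^1+C^1)\mathbb{E}[X]$. By the expectation argument above, $(m,m')\in\mathscr{D}^{\beta,\beta'}_\eta$; this uses \ref{S1-R} together with Lemma \ref{productest} to control the remainder of the deterministic path. With this choice, $(X,Y,Z)$ solves \eqref{eq:rFBSDE} for the frozen $(m,m')$. Uniqueness in Theorem \ref{thm sol_rough_FBSDE} and the sufficiency part of Theorem \ref{thm Stoch_optimal_control_Volterra} (Lemma \ref{lemma 03.6}) then show that $\alpha^*=-R^{-1}B^\top Y$ is the unique optimizer. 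Since \eqref{eq:rConsistency} holds by construction, $((m,m'),\alpha^*)$ is an MFG equilibrium.

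\emph{Uniqueness transfer.} The two maps just described are mutually inverse. An equilibrium $(m,m')$ is determined by $X$ through \eqref{eq:rConsistency}, and $X$ is in turn determined by $(m,m')$ through Theorem \ref{thm sol_rough_FBSDE}. So uniqueness on one side gives uniqueness on the other, and the same holds for existence.

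The main obstacle I expect is making the identification of Gubinelli derivatives rigorous: showing that $\mathbb{E}\int(\cdot)\,\ud\pmb\eta=\int\mathbb{E}(\cdot)\,\ud\pmb\eta$ with the correct derivative, and that the remainder $\mathbb{E}_\bullet R$ passes to the deterministic remainder of $\mathbb{E}[X]$. This needs only expectation of the compensated Riemann sums and the estimates in Proposition \ref{prop:roughinteg}. A further point is that a controlled rough path's derivative is not unique in general, so equality of the rough integrals is what actually needs checking. I would handle this by comparing the integrals directly via their defining sums, where the difference reduces to $C^1(m'-\mathbb{E}[X]')\,\eta^{(2)}$, which vanishes.
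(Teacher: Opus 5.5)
Your proposal is correct and follows the paper's route: the paper obtains this theorem in one line by combining the rough SMP of Theorem \ref{thm sol_rough_FBSDE} (for frozen $(m,m')$) with the consistency condition \eqref{eq:rConsistency}, and that combination is exactly your bijection between equilibria and McKean--Vlasov solutions. Your additional step is a correct filling-in of what the paper leaves implicit: you show that the second consistency identity makes the Gubinelli derivative of $C^1 m$ agree with that of $C^1\mathbb{E}[X]$, so the rough integrals coincide, using the same expectation argument the paper later uses in Theorem \ref{thm:RMFGfixedPointEquivalence}.
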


\begin{equation*}\tag{\ref{eq 0001}}
    \left\{\begin{aligned}
&X^{\xi}_t = \xi+ \int_0^t \big(A^1_s X^{\xi}_s+C^1_s \mathbb{E}\big[X^{\xi}_s\big]\big)\ud\pmb{\eta}_s\\&\quad\quad+ \int_0^t \big(A_s X^{\xi}_s - B_s R_s^{-1} B_s^\top Y^{\xi}_s + C_s \mathbb{E}\big[X^{\xi}_s\big]\big)\,\ud s
   + \int_0^t \Sigma_s\ud W_s,\\&
Y^{\xi}_t
 = (Q+\bar Q) X^{\xi}_T - \bar Q S \mathbb{E}\big[X^{\xi}_T\big] +\int_t^T (A^1_s)^\top Y^{\xi}_s \ud\pmb{\eta}_s \\&\quad\quad+\int_t^T \big((Q_s+\bar Q_s) X^{\xi}_s - \bar Q_s S_s \mathbb{E}\big[X^{\xi}_s\big]+A_s^{\top} Y^{\xi}_s\big) \ud s 
   -\int_t^T  Z^{\xi}_s\ud W_s,
\end{aligned}\right.
\end{equation*}
In order to solve \eqref{eq 0001} we follow the classical two step process. First we substitute $\big(\mathbb{E}\big[X^{\xi}_\cdot\big],(\mathbb{E}\big[X^{\xi}_\cdot\big])'\big)$ by a parameter $(m,m')\in \mathscr{D}^{\beta,\beta'}_{\eta}\big([0,T];\mathbb{R}^d\big)$, solve the new system of equations, and then in the second step search for fixed points with respect to $(m,m')$ and a tailor made-map. The first step has already been dealt with Theorem \ref{thm sol_rough_FBSDE}, so this subsection (and the rest of Section \ref{sec 3}) are concerned with the second step.

To ease the notation and since the initial condition $\xi$ is fixed in this section, we shall omit it from the notation.
Now, by taking expectations on \eqref{stoch_max_principle rough FBSDE} we get 
\begin{equation}\label{eq 029}
\left\{\begin{aligned}
&\mathbb{E}\big[X^{(m,m')}_t\big] = \mathcal{K}^{\pmb{\eta}}(t,0)\mathbb{E}[\xi] + 
M^{\pmb{\eta}}\big((m,m')\big)_t \\&\quad\quad\quad\quad\quad+ \int_0^t \mathcal{K}^{\pmb{\eta}}(t,s)\big(A_s \mathbb{E}\big[X^{(m,m')}_s\big] - B_s R_s^{-1} B_s^\top \mathbb{E}\big[Y^{(m,m')}_s\big]+C_s m_s\big)\,\ud s,\\&
\mathbb{E}\big[Y^{(m,m')}_t\big]
 = \mathcal{K}^{\pmb{\eta}}(T,t)^\top\big((Q+\bar Q)\mathbb{E}\big[X^{(m,m')}_T\big]- \bar Q Sm_T\big) \\&
\quad\quad\quad\quad\quad\quad+\int_t^T \mathcal{K}^{\pmb{\eta}}(s,t)^\top\Big((Q_s+\bar Q_s)\mathbb{E}\big[X^{(m,m')}_s\big]-\bar Q_s S_s m_s+A_s^{\top} \mathbb{E}\big[Y^{(m,m')}_s\big]\Big) \ud s.
\end{aligned}\right.
\end{equation}
From Theorem \ref{thm equivalence}, with $\Sigma=0$ and $\xi$ deterministic (see the proof of Theorem \ref{thm sol_rough_FBSDE}), \eqref{eq 029} is equivalent to the forward-backward RDE (FBRDE)
\begin{equation}\label{eq 0030}
\left\{\begin{aligned}
&\mathbb{E}\big[X^{(m,m')}_t\big] = \mathbb{E}[\xi] + \int_{0}^t\big( A^1_s \mathbb{E}\big[X^{(m,m')}_s\big]+ {C}^{1}_s m_s\big)\ud \pmb{\eta}_s 
 \\&\quad\quad\quad\quad\quad+ \int_0^t \big(A_s \mathbb{E}\big[X^{(m,m')}_s\big] - B_s R_s^{-1} B_s^\top \mathbb{E}\big[Y^{(m,m')}_s\big]+C_s m_s\big)\,\ud s,\\&
\mathbb{E}\big[Y^{(m,m')}_t\big]
 = (Q+\bar Q)\mathbb{E}\big[X^{(m,m')}_T\big]- \bar Q Sm_T + \int_{t}^T (A^1_s)^\top \mathbb{E}\big[Y^{(m,m')}_s\big] \ud \pmb{\eta}_s 
 \\&\quad\quad\quad\quad\quad
 +\int_t^T \Big((Q_s+\bar Q_s)\mathbb{E}\big[X^{(m,m')}_s\big]-\bar Q_s S_s m_s+A_s^{\top} \mathbb{E}\big[Y^{(m,m')}_s\big]\Big) \ud s.
\end{aligned}\right.
\end{equation}

If $\big(m^0,(m^0)'\big)$ is a fixed point of the map $\widetilde{F}:\mathscr{D}^{\beta,\beta'}_{\eta}\big([0,T];\mathbb{R}^d\big) \rightarrow \mathscr{D}^{\beta,\beta'}_{\eta}\big([0,T];\mathbb{R}^d\big)$, where 
\begin{equation}\label{eq:map_widetilde_F}
    \widetilde{F}\big((m,m')\big)_t:= \big(\mathbb{E}[{X}^{(m,m')}_t], A^1_t \mathbb{E}[{X}^{(m,m')}_t] +C^1_tm_t\big),
\end{equation}
then it satisfies the consistency equation
\begin{equation}\label{eq:rRconsistencyCondition}
   (m_t,m'_t) = \big(\mathbb{E}[{X}^{(m,m')}_t], (A^1_t+C^1_t) \mathbb{E}[{X}^{(m,m')}_t] \big).
\end{equation}
Plugging this equation to the system above and setting $$\mathbb{E}\big[X^{(m^0,(m^0)')}_t\big]:= \bar{x}_t, \mathbb{E}\big[Y^{(m^0,(m^0)')}_t\big]:= \bar{y}_t$$
we obtain 
\begin{equation}\label{eq:FBRDE}
\left\{\begin{aligned}
&\bar{x}_t = \mathbb{E}[\xi]+\int_{0}^t (A^1_s+ C^1_s)\bar{x}_s\ud\pmb{\eta}_s+ \int_0^t \big((A_s+C_s) \bar{x}_s - B_s R_s^{-1} B_s^\top \bar{y}_s \big)\,\ud s,\\&
\bar{y}_t
 = (Q+\bar Q- \bar Q S)\bar{x}_T +\int_t^T (A^1_s)^\top \bar{y}_s \ud \pmb{\eta}_s +\int_t^T \big((Q_s+\bar Q_s-\bar Q_s S_s) \bar{x}_s+A_s^{\top} \bar{y}_s\big) \ud s.
\end{aligned}\right.
\end{equation}
\begin{rmk}\label{rmk 4.7}
Note that the solution $\big((\bar{x},\bar{x}'),(\bar{y},\bar{y}')\big)$ of the FBRDE  is an element of $\mathscr{D}^{\gamma,\beta}_{\eta}\big([0,T];\mathbb{R}^d\big) \times \mathscr{D}^{\gamma,\beta}_{\eta}\big([0,T];\mathbb{R}^d\big)$ and $\bar{x}'= (A^1+ C^1)\bar{x}$ and $\bar{y}'= -(A^1)^\top \bar{y}$.
\end{rmk}

\begin{thm}\label{thm:RMFGfixedPointEquivalence}
Let (\ref{S1-R}), (\ref{S2}), (\ref{S3}) hold true. Then, there exists a one-to-one correspondence between the fixed points of map $\widetilde{F}$ \eqref{eq:map_widetilde_F} and the solutions of  system \eqref{eq:FBRDE}. Specifically, every fixed point is the forward component of a solution of \eqref{eq:FBRDE}, and vise versa, every forward component of a solution of \eqref{eq:FBRDE} is a fixed point of $\widetilde{F}$.    
\end{thm}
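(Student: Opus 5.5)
Both directions of the correspondence will go through the expected-value system \eqref{eq 0030}, which (by Theorem \ref{thm equivalence} with $\Sigma=0$, deterministic initial datum, and the same rough-product-formula argument as in the proof of Theorem \ref{thm sol_rough_FBSDE}) is equivalent to \eqref{eq 029}. For each fixed $(m,m')$, Theorem \ref{thm sol_rough_FBSDE} gives a unique solution of \eqref{eq:rFBSDE}. Its expectations $(\mathbb{E}[X^{(m,m')}],\mathbb{E}[Y^{(m,m')}])$ then solve \eqref{eq 0030}. Here the rough integral commutes with expectation: the coefficients are deterministic, the integrands lie in $\textbf{D}^{\gamma,\beta}_\eta\mathbb{L}^k$, and the Riemann-sum limit in Proposition \ref{prop:roughinteg} also holds in $\mathbb{L}^1$. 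The It\^o integral has zero mean because $Z\in\mathbb{H}^2$. Moreover, \eqref{eq 0030} has at most one solution for given $(m,m')$. To see this, note that the difference of two solutions solves \eqref{eq 0030} with $m=0$ and $\mathbb{E}[\xi]=0$. That is the expected-value system of the control problem with $\xi=0$, $\Sigma=0$, $m=0$, whose Volterra form is uniquely solvable by Theorem \ref{thm 3.9}, and it is transferred by Theorem \ref{thm equivalence}.

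\emph{Fixed point $\Rightarrow$ solution.} Let $(m^0,(m^0)')$ be a fixed point of $\widetilde F$. Then $m^0=\mathbb{E}[X^{(m^0,(m^0)')}]=\bar x$ and $(m^0)'=A^1\bar x+C^1 m^0=(A^1+C^1)\bar x$, i.e.\ \eqref{eq:rRconsistencyCondition} holds. Substituting $m=\bar x$ into \eqref{eq 0030}, the rough integrand $A^1\bar x+C^1 m$ becomes $(A^1+C^1)\bar x$. Its Gubinelli derivative in \eqref{eq 0030} is $(A^1)'\bar x+A^1\bar x'+(C^1)'m+C^1m'$, and this equals the one induced by $(\bar x,\bar x')$ in \eqref{eq:FBRDE} precisely because $m'=\bar x'$. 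This is the point where the consistency condition on $m'$ is needed, and it is the step I expect to require the most care. The drift collapses to $(A+C)\bar x-BR^{-1}B^\top\bar y$. In the backward equation, $-\bar Q S m_T=-\bar QS\bar x_T$ and $-\bar Q_sS_sm_s=-\bar Q_sS_s\bar x_s$. Hence $(\bar x,\bar y)$ solves \eqref{eq:FBRDE}, with the derivatives given in Remark \ref{rmk 4.7}.

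\emph{Solution $\Rightarrow$ fixed point.} Let $((\bar x,\bar x'),(\bar y,\bar y'))$ solve \eqref{eq:FBRDE}. By Remark \ref{rmk 4.7}, $(\bar x,(A^1+C^1)\bar x)\in\mathscr{D}^{\gamma,\beta}_\eta\subset\mathscr{D}^{\beta,\beta'}_\eta$. Set $(m,m'):=(\bar x,(A^1+C^1)\bar x)$. Reading the substitution above backwards, with the rough integral identified as before via matching Gubinelli derivatives, shows that $(\bar x,\bar y)$ solves \eqref{eq 0030} for this $(m,m')$. By the uniqueness noted above, $\mathbb{E}[X^{(m,m')}]=\bar x$ and $\mathbb{E}[Y^{(m,m')}]=\bar y$. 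Therefore $\widetilde F((m,m'))=(\bar x,A^1\bar x+C^1\bar x)=(m,m')$, so $(m,m')$ is a fixed point.

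The two constructions are mutually inverse, since each sends $\bar x$ to itself. This yields the claimed one-to-one correspondence.
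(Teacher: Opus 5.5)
Your proposal is correct and follows essentially the same route as the paper: fixed point $\Rightarrow$ solution by substituting the consistency condition into the expected system \eqref{eq 0030}, and solution $\Rightarrow$ fixed point by showing that $(\bar x,\bar y)$ coincides with $(\mathbb{E}[X^{(\bar x,\bar x')}],\mathbb{E}[Y^{(\bar x,\bar x')}])$ via uniqueness of the homogeneous linear forward--backward system. The only difference is cosmetic: the paper applies the uniqueness part of Theorem~\ref{thm sol_rough_FBSDE} directly to the difference (with $\xi=0$, $\Sigma=0$, $C=C^1=0$, $S=0$), whereas you phrase it as uniqueness for \eqref{eq 0030} obtained from Theorem~\ref{thm 3.9} and the Volterra transfer, which amounts to the same argument.
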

This result can be viewed as a generalization of the standard equivalence in the LQ MFG setting, see for example \cite[Thm 3.34]{carmona2018probabilistic}.
\begin{proof}
First we assume that $\widetilde{F}$ has a fixed point $(m^0,(m^0)')$. From the analysis before Remark \ref{rmk 4.7} we have 
\begin{equation*}
\left\{\begin{aligned}
&\mathbb{E}\big[X^{(m^0,(m^0)')}_t\big] = \mathbb{E}[\xi] + \int_{0}^t ( A^1_s + {C}^{1}_s )  \mathbb{E}\big[X^{(m^0,(m^0)')}_s\big]\ud \pmb{\eta}_s 
 \\&\quad\quad\quad\quad\quad\quad+ \int_0^t \big((A_s + C_s) \mathbb{E}\big[X^{(m^0,(m^0)')}_s\big] - B_s R_s^{-1} B_s^\top \mathbb{E}\big[Y^{(m^0,(m^0)')}_s\big]\big)\,\ud s,\\&
\mathbb{E}\big[Y^{(m^0,(m^0)')}_t\big]
 = (Q+\bar Q - \bar Q S)\mathbb{E}\big[X^{(m^0,(m^0)')}_T\big] + \int_{t}^T (A^1_s)^\top \mathbb{E}\big[Y^{(m^0,(m^0)')}_s\big] \ud \pmb{\eta}_s 
 \\&\quad\quad\quad\quad\quad\quad
 +\int_t^T \Big((Q_s+\bar Q_s - \bar Q_s S_s)\mathbb{E}\big[X^{(m^0,(m^0)')}_s\big] + A_s^{\top} \mathbb{E}\big[Y^{(m^0,(m^0)')}_s\big]\Big) \ud s.
\end{aligned}\right.
\end{equation*}
and thus \eqref{eq:FBRDE} is solved by the pair 
\begin{multline*}
\Big(\big(\mathbb{E}[{X}^{(m^0,(m^0)')}_\cdot],(A^1_\cdot + C^1_\cdot)\mathbb{E}[{X}^{(m^0,(m^0)')}_\cdot]\big),\big(\mathbb{E}[{Y}^{(m^0,(m^0)')}_\cdot],-(A^1_\cdot)^\top \mathbb{E}[{Y}^{(m^0,(m^0)')}_\cdot]\big)\Big)\\
=\Big(\big(m^0_\cdot,(A^1_\cdot + C^1_\cdot)m^0_\cdot\big),\big(\mathbb{E}[{Y}^{(m^0,(m^0)')}_\cdot],-(A^1_\cdot)^\top \mathbb{E}[{Y}^{(m^0,(m^0)')}_\cdot]\big)\Big).
\end{multline*}
In other words, we have a mapping
$$(m^0,(m^0)')\mapsto \Big(\big(m^0_\cdot,(A^1_\cdot + C^1_\cdot)m^0_\cdot\big),\big(\mathbb{E}[{Y}^{(m^0,(m^0)')}_\cdot],-(A^1_\cdot)^\top \mathbb{E}[{Y}^{(m^0,(m^0)')}_\cdot]\big)\Big)$$ from fixed points to solutions of the FBRDE.

For the opposite direction, let $\big((\bar{x}^0,(\bar{x}^0)'),(\bar{y}^0,(\bar{y}^0)')\big)$ be a solution of \eqref{eq:FBRDE}. From Theorem \ref{thm sol_rough_FBSDE} there exists a unique triplet $\big(\big(X^{(\bar{x}^0,(\bar{x}^0)')},(X^{(\bar{x}^0,(\bar{x}^0)')})'\big),\big(Y^{(\bar{x}^0,(\bar{x}^0)')},(Y^{(\bar{x}^0,(\bar{x}^0)')})'\big),Z^{(\bar{x}^0,(\bar{x}^0)')}\big)$\footnote{Of the appropriate space described in Theorem \ref{thm sol_rough_FBSDE}.}
such that 
\begin{equation*}
    \left\{\begin{aligned}
&X^{(\bar{x}^0,(\bar{x}^0)')}_t = \xi+ \int_0^t \big(A^1_s X^{(\bar{x}^0,(\bar{x}^0)')}_s + C^1_s \bar{x}^0_s\big)\ud\pmb{\eta}_s\\&\quad\quad\quad\quad+ \int_0^t \big(A_s X^{(\bar{x}^0,(\bar{x}^0)')}_s - B_s R_s^{-1} B_s^\top Y^{(\bar{x}^0,(\bar{x}^0)')}_s + C_s \bar{x}^0_s\big)\,\ud s
   + \int_0^t \Sigma_s\ud W_s,\\&
Y^{(\bar{x}^0,(\bar{x}^0)')}_t
 = (Q+\bar Q) X^{(\bar{x}^0,(\bar{x}^0)')}_T - \bar Q S \bar{x}^0_T +\int_t^T (A^1_s)^\top Y^{(\bar{x}^0,(\bar{x}^0)')}_s \ud\pmb{\eta}_s \\&\quad\quad\quad\quad+\int_t^T \big((Q_s+\bar Q_s) X^{(\bar{x}^0,(\bar{x}^0)')}_s - \bar Q_s S_s \bar{x}^0_s + A_s^{\top} Y^{(\bar{x}^0,(\bar{x}^0)')}_s\big) \ud s 
   -\int_t^T  Z^{(\bar{x}^0,(\bar{x}^0)')}_s\ud W_s,
\end{aligned}\right.
\end{equation*}
We want to show that $\widetilde{F}\big((\bar{x}^0,(\bar{x}^0)')\big)_t:=\big(\mathbb{E}[{X}^{(\bar{x}^0,(\bar{x}^0)')}_t], A^1_t \mathbb{E}[{X}^{(\bar{x}^0,(\bar{x}^0)')}_t] +C^1_t \bar{x}^0_t\big) = (\bar{x}^0,(\bar{x}^0)').$ By taking expectations above\footnote{To avoid rough path considerations one can use \eqref{stoch_max_principle rough FBSDE} and Theorem \ref{thm sol_rough_FBSDE} as in the beginning of the subsection.} we have that the pair $$\Big(\big(\mathbb{E}[X^{(\bar{x}^0,(\bar{x}^0)')}_\cdot],A^1_\cdot \mathbb{E}[X^{(\bar{x}^0,(\bar{x}^0)')}_\cdot] + C^1_\cdot \bar{x}^0_\cdot\big),\big(\mathbb{E}[Y^{(\bar{x}^0,(\bar{x}^0)')}_\cdot],-(A^1_\cdot)^\top \mathbb{E}[Y^{(\bar{x}^0,(\bar{x}^0)')}_\cdot]\big)\Big)$$ 
satisfies
\begin{equation*}
    \left\{\begin{aligned}
&\mathbb{E}[X^{(\bar{x}^0,(\bar{x}^0)')}_t] = \mathbb{E}[\xi] + \int_0^t \big(A^1_s \mathbb{E}[X^{(\bar{x}^0,(\bar{x}^0)')}_s] + C^1_s \bar{x}^0_s\big)\ud\pmb{\eta}_s\\&\qquad\quad\quad\quad+ \int_0^t \big(A_s \mathbb{E}[X^{(\bar{x}^0,(\bar{x}^0)')}_s] - B_s R_s^{-1} B_s^\top \mathbb{E}[Y^{(\bar{x}^0,(\bar{x}^0)')}_s] + C_s \bar{x}^0_s\big)\,\ud s,\\&
\mathbb{E}[Y^{(\bar{x}^0,(\bar{x}^0)')}_t]
 = (Q+\bar Q) \mathbb{E}[X^{(\bar{x}^0,(\bar{x}^0)')}_T] - \bar Q S \bar{x}^0_T +\int_t^T (A^1_s)^\top \mathbb{E}[Y^{(\bar{x}^0,(\bar{x}^0)')}_s] \ud\pmb{\eta}_s \\&\qquad\quad\quad\quad+\int_t^T \big((Q_s+\bar Q_s) \mathbb{E}[X^{(\bar{x}^0,(\bar{x}^0)')}_s] - \bar Q_s S_s \bar{x}^0_s + A_s^{\top} \mathbb{E}[Y^{(\bar{x}^0,(\bar{x}^0)')}_s] \big) \ud s.
\end{aligned}\right.
\end{equation*}
Therefore, the triplet 
$$\Big(\big(\mathbb{E}[X^{(\bar{x}^0,(\bar{x}^0)')}_\cdot] - \bar{x}^0_\cdot,A^1_\cdot (\mathbb{E}[X^{(\bar{x}^0,(\bar{x}^0)')}_\cdot] - \bar{x}^0_\cdot)\big),\big(\mathbb{E}[Y^{(\bar{x}^0,(\bar{x}^0)')}_\cdot] - \bar{y}^0_\cdot,-(A^1_\cdot)^\top (\mathbb{E}[Y^{(\bar{x}^0,(\bar{x}^0)')}_\cdot]- \bar{y}^0_\cdot)\big),0\Big)$$
satisfies
\begin{equation*}
    \left\{\begin{aligned}
&\mathbb{E}[X^{(\bar{x}^0,(\bar{x}^0)')}_t] -\bar{x}^0_t = \int_0^t A^1_s \big(\mathbb{E}[X^{(\bar{x}^0,(\bar{x}^0)')}_s] -\bar{x}^0_s\big)\ud\pmb{\eta}_s\\&\quad\quad\qquad\quad\quad\quad+ \int_0^t \Big(A_s  \big(\mathbb{E}[X^{(\bar{x}^0,(\bar{x}^0)')}_s] -\bar{x}^0_s\big) - B_s R_s^{-1} B_s^\top  \big(\mathbb{E}[Y^{(\bar{x}^0,(\bar{x}^0)')}_s] -\bar{y}^0_s\big)\Big)\,\ud s,\\&
 \mathbb{E}[Y^{(\bar{x}^0,(\bar{x}^0)')}_t] -\bar{y}^0_t
 = (Q+\bar Q)  \big(\mathbb{E}[X^{(\bar{x}^0,(\bar{x}^0)')}_T] -\bar{x}^0_T\big) +\int_t^T (A^1_s)^\top  \big(\mathbb{E}[Y^{(\bar{x}^0,(\bar{x}^0)')}_s] -\bar{y}^0_s\big) \ud\pmb{\eta}_s \\&\qquad\qquad\qquad\quad+\int_t^T \Big((Q_s+\bar Q_s)  \big(\mathbb{E}[X^{(\bar{x}^0,(\bar{x}^0)')}_s] -\bar{x}^0_s\big) + A_s^{\top}  \big(\mathbb{E}[Y^{(\bar{x}^0,(\bar{x}^0)')}_s] -\bar{y}^0_s\big) \Big) \ud s.
\end{aligned}\right.
\end{equation*}
This last system has the form of \eqref{eq:rFBSDE} with $\xi=0,\Sigma_t=0,C^1_t=0, (C^1)'_t=0, C_t=0,S_t=0, S=0.$ In view of Theorem \ref{thm sol_rough_FBSDE} it has a unique solution which is equal to $\big((0,0),(0,0),0\big)$. Thus, $(\bar{x}^0,(\bar{x}^0)')$ is a fixed point of $\widetilde{F}$ and $$\big((\bar{x}^0,(\bar{x}^0)'),(\bar{y}^0,(\bar{y}^0)')\big) = \big((\bar{x}^0,(\bar{x}^0)'),\big(\mathbb{E}[Y^{(\bar{x}^0,(\bar{x}^0)')}_\cdot],-(A^1_\cdot)^\top \mathbb{E}[Y^{(\bar{x}^0,(\bar{x}^0)')}_\cdot]\big)\big).$$
\end{proof}

\subsection{Solving the FBRDE}\label{Section:FBRDE}

From Theorem \ref{thm:VolterraEquivalence} and Theorem \ref{thm:RMFGfixedPointEquivalence}, the rough LQ MFG problem \eqref{eq rough_affineI}-\eqref{eq:rConsistency} has a (unique) solution, provided 
that \eqref{eq:FBRDE} has a (unique) solution. 

In this section we solve the latter
 through a decoupling field $\bar{v}.$
After inspecting the form of the terminal condition in \eqref{eq:FBRDE} we choose $\bar{v}$ to be linear.
Thus we make the ansatz 
\begin{gather}
\bar{v}(t,x):=  \bar{p}_t x,\hspace{.1cm}\forall(t,x)\in[0,T]\times \mathbb{R}^d\nonumber\\
\text{and}\nonumber\\
    {\bar{y}}_t = \bar{p}_t {\bar{x}}_t.\label{eq:FBRDEansatz}
\end{gather}
Of course, as we have already mentioned, in order for the decoupling field to be properly defined, we need to uniquely characterize the process $\bar{p}$. One way to do that is to find a proper characteristic equation for $\bar{p}$.

If $\pmb{\eta}$ is smooth, we can absorb the rough dependence of \eqref{eq:FBRDE} in the Riemann integrals, and the form of the characteristic equation for $\bar{p}$ is known by classical theory. This indicates that, in the rough case, the correct characteristic equation takes the form of the non-symmetric \textit{rough Riccati equation}
\begin{equation}\label{eq:rRiccati}
    \begin{aligned}
        \bar{p}_t&={Q}+{\bar Q}-{\bar{Q}}{S}+\int_t^T \big(\bar{p}_s(A^1_s +{C}^{1}_s)+(A^1_s)^\top\bar{p}_s\big) \ud\pmb{\eta}_s\\&+\int_t^T\Big(\bar{p}_s({A}_s + {C}_s) + ({A}_s)^{\top} \bar{p}_s - \bar{p}_s{B}_s {R}_s^{-1} {B}_s^\top \bar{p}_s + {Q}_s+{\bar Q}_s-{\bar{Q}}_s{S}_s\Big)\ud s.
    \end{aligned}
\end{equation}

\begin{enumerate}[label=(S4-R), ref=S4-R]
    \item\label{(S4-R)} We assume that $Q_t+\bar Q_t-\bar Q_t S_t\succeq 0$ and $Q+\bar Q-\bar Q S\succeq 0$. Additionally, let $(e,e')\in \mathscr{D}^{\gamma,\beta}_{\eta}\big([0,T];\mathbb{R}^{d\times d}\big)$ be the unique solution to the linear RDE\footnote{Uniqueness is standard, e.g.  by straightforward extension of \cite[Sec 8.9]{friz2020course}, or as special case of \cite[Thm 3.11] {bugini2024malliavin}, or \cite[Thm 3.5]{horst2025pontryagin}.}
    \begin{equation}\label{eq:eRDE}
 e_t=\text{Id}_{d\times d}+\int_0^t ({C}_s)^\top e_s\ud s + \int_0^t ({C}^{1}_s)^{\top}e_s\ud \pmb{\eta}_s.
    \end{equation}
    We assume that there exists $\lambda:[0,T]\rightarrow (0,\infty)$ such that, for all $t\in[0,T],$
    \begin{equation*}
        e_t = \lambda_t\text{Id}_{d\times d}.
    \end{equation*}
    So, 
    we get that $e'_t= \lambda_t({C}^{1}_t)^{\top}$.
\end{enumerate}

\begin{rmk}\label{rmk 4.12}
Assumption (\ref{(S4-R)}) can be viewed as a natural "rough" extension of an analogous condition that is typically assumed in classical LQ MFGs, see e.g. \cite[Proposition 3.35]{carmona2018probabilistic}. It is satisfied when for example there exist $\lambda^{(1)}:[0,T]\rightarrow\mathbb{R}$, $\big(\lambda^{(2)},(\lambda^{(2)})'\big)\in \mathscr{D}^{\beta,\beta'}_{\eta}\big([0,T];\mathbb{R}^{1\times p}\big)$, such that $C_t=\lambda^{(1)}_t\text{Id}_{d\times d}$ and ${C}^{1}_t(\cdot) = (\lambda^{(2)}_t\cdot)\text{Id}_{d\times d}$. Indeed then we have
\begin{align*}
    ({C}_t)^{\top} &= \lambda^{(1)}_t\text{Id}_{d\times d},\\
    ({C}^{1}_t(\cdot))^{\top}&= (\lambda^{(2)}_t\cdot)\text{Id}_{d\times d}, 
\end{align*}
  and so $e_t = \exp{\Big(\int_0^t\lambda^{(1)}_s\ud s+\int_0^t\lambda^{(2)}_s\ud\pmb{\eta}_s\Big)}\text{Id}_{d\times d}$.
 \end{rmk}

\begin{thm}\label{thm:FBRDEwellposedness}
Let (\ref{S1-R}), (\ref{S2}), (\ref{S3}) and (\ref{(S4-R)}) hold true.
Then, \eqref{eq:FBRDE} has a unique solution, and equivalently, the rough LQ MFG \eqref{eq rough_affineI}-\eqref{eq:rConsistency} also has a unique solution.
\end{thm}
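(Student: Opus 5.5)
Since Theorem~\ref{thm:VolterraEquivalence} and Theorem~\ref{thm:RMFGfixedPointEquivalence} reduce the rough LQ MFG to \eqref{eq:FBRDE}, it suffices to prove existence and uniqueness for the deterministic FBRDE \eqref{eq:FBRDE}. The plan is to use the decoupling ansatz \eqref{eq:FBRDEansatz} and to show that the non-symmetric rough Riccati equation \eqref{eq:rRiccati} has a unique global solution on $[0,T]$. Assumption (\ref{(S4-R)}) is what makes this possible.

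\textbf{Step 1: symmetrization.} The asymmetry in \eqref{eq:rRiccati} comes only from the terms $\bar p(A^1+C^1)$ and $\bar p(A+C)$, compared with $(A^1)^\top\bar p$ and $A^\top\bar p$. I would substitute $\bar p_t = e_t^{-\top}\tilde p_t$, or a similar transformation built from the linear RDE \eqref{eq:eRDE} (transposed as appropriate), and compute its dynamics with the rough product formula, Theorem~\ref{rough_product_formula}. The aim is to absorb the extra $C$, $C^1$ terms. Under (\ref{(S4-R)}) we have $e_t=\lambda_t\mathrm{Id}$, which commutes with every matrix. So the transformation becomes multiplication by the scalar controlled path $\lambda_t$, and I expect $\tilde p$ to satisfy a \emph{symmetric} backward rough Riccati equation. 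Its linear part is driven by $A$, $A^1$, its quadratic term has the modified weight $\tilde p B R^{-1}B^\top\tilde p/\lambda$, and its source and terminal terms are $\lambda(Q_s+\bar Q_s-\bar Q_sS_s)\succeq0$ and $\lambda_T(Q+\bar Q-\bar QS)\succeq0$.

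\textbf{Step 2: global solvability.} Because $\lambda>0$ and is continuous, $R/\lambda$ stays uniformly positive definite. I would then apply Theorem~\ref{thm: sym_rough_riccati} from the appendix, the backward symmetric rough Riccati result already used in the proof of Theorem~\ref{thm 3.9}, to get a unique global solution $\tilde p\succeq0$ in $\mathscr D^{\gamma,\beta}_\eta$. Transforming back gives a unique global $\bar p$. Local well-posedness and uniqueness of \eqref{eq:rRiccati} as an RDE with locally Lipschitz coefficients would also follow directly, so the only real content is non-explosion. The symmetrization step is therefore the main obstacle: one has to check carefully that the rough Gubinelli-derivative terms from $\lambda$ (namely $e'=\lambda (C^1)^\top$) cancel exactly the $\bar p C^1\,\ud\pmb\eta$ term. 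I expect this to hold in both the first-order term and the compensator $\eta^{(2)}$-level contributions, since the rough product rule holds for geometric $\pmb\eta$ without any It\^o correction.

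\textbf{Step 3: existence and uniqueness for the FBRDE.} Given $\bar p$, I would solve the forward linear RDE
\[
\bar x_t=\mathbb E[\xi]+\int_0^t(A^1_s+C^1_s)\bar x_s\,\ud\pmb\eta_s+\int_0^t\big(A_s+C_s-B_sR_s^{-1}B_s^\top\bar p_s\big)\bar x_s\,\ud s,
\]
and set $\bar y=\bar p\bar x$. The product rule together with \eqref{eq:rRiccati} shows that $\bar y$ satisfies the backward equation, which gives existence. For uniqueness, I would take two solutions and form the difference $(\delta x,\delta y)$, which solves \eqref{eq:FBRDE} with $\mathbb E[\xi]=0$. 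Setting $r:=\delta y-\bar p\,\delta x$, the product rule shows that $r$ solves a homogeneous linear backward RDE,
\[
r_t=\int_t^T(A^1_s)^\top r_s\,\ud\pmb\eta_s+\int_t^T\big(A_s^\top-\bar p_sB_sR_s^{-1}B_s^\top\big)r_s\,\ud s,\qquad r_T=0 .
\]
Uniqueness for linear RDEs then gives $r\equiv0$, after which $\delta x$ solves a homogeneous forward linear RDE with zero initial value, so $\delta x\equiv0$. Combining this with the reductions above yields the claimed equivalence for the rough LQ MFG.
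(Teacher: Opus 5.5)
Your proposal is correct. The existence half follows the paper's argument: global symmetric rough Riccati from Theorem~\ref{thm: sym_rough_riccati}, back-transformation to $\bar p$, the decoupled linear RDE \eqref{eq 034}, and verification of $\bar y=\bar p\bar x$ via Theorem~\ref{rough_product_formula}. You differ in the symmetrizing substitution and, more substantially, in the uniqueness proof.

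\textbf{Symmetrization.} The paper sets $\bar p=e\tilde p$, i.e.\ $\tilde p=\lambda^{-1}\bar p$. Its $\tilde p$ then solves a symmetric Riccati with linear parts $A+C$, $A^1+C^1$, control weight $R/\lambda$ and source $(Q_s+\bar Q_s-\bar Q_sS_s)/\lambda$. You take $\tilde p=\lambda\bar p$, which removes $C,C^1$ instead. Your control weight is $\lambda R$, not $R/\lambda$ as you wrote, but both are uniformly positive, so nothing breaks.

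The paper's choice is slightly more economical. The terms $C^\top\bar p\,\ud t+(C^1)^\top\bar p\,\ud\pmb{\eta}$ produced by $\ud e$ land on the left, next to $(A+C)^\top$ and $(A^1+C^1)^\top$. So only the fact that $e_t$ commutes with every matrix is used. In your substitution the contribution of $\ud\lambda$ has to cancel $\tilde pC$ and $\tilde pC^1$ on the right. That uses that $C$ and $C^1$ themselves commute with $\tilde p$, i.e.\ are scalar multiples of the identity. This is what $e=\lambda\,\mathrm{Id}$ together with $e'=\lambda(C^1)^\top$ gives, as you anticipated. But once you use it, \eqref{eq:rRiccati} is already of the symmetric form of Theorem~\ref{thm: sym_rough_riccati} with $A,A^1$ replaced by $A+\tfrac12C$, $A^1+\tfrac12C^1$. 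So on your route the substitution is optional.

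\textbf{Uniqueness.} The paper transforms \eqref{eq:FBRDE} with the flows $\Lambda^{\pmb{\eta}}$ and $\mathcal K^{\pmb{\eta}}$ into the classical FBODE \eqref{eq 0034}. It notes that this system admits an affine decoupling field and cites \cite[Proposition 4.8]{carmona2018probabilistic}. Your argument is essentially the linear-case proof of that proposition, carried out directly at the rough level: $r=\delta y-\bar p\,\delta x$ solves a homogeneous linear backward RDE with $r_T=0$, so it vanishes, and then $\delta x$ solves a homogeneous forward linear RDE with zero initial value. Your version is shorter and self-contained. It needs only the product rule and uniqueness for linear RDEs, with backward ones handled by time reversal of the geometric rough path. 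The paper's route instead reuses the kernel machinery of Section~\ref{sec 3} in order to reduce to a citable classical statement.
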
 

\begin{proof}
\noindent \textbf{Step 1. (Existence)}
By Theorem \ref{thm: sym_rough_riccati} we get a unique global solution for the symmetric rough Riccati 

\begin{equation}\label{eq:symrRiccati}
\left\{\begin{aligned}
        &\widetilde{p}_t={(Q}+{\bar Q}-{\bar{Q}}{S})e^{-1}_T+\int_t^T \Big(\widetilde{p}_s(A^1_s +{C}^{1}_s)+\big((A^1_s)^\top+(C^{1}_s)^\top\big)\widetilde{p}_s\Big) \ud\pmb{\eta}_s\\&\quad\quad+\int_t^T\Big(\widetilde{p}_s({A}_s + {C}_s) + ({A}_s)^{\top} \widetilde{p}_s - \widetilde{p}_s{B}_s ({R}_se^{-1}_s)^{-1} {B}_s^\top \widetilde{p}_s + ({Q}_s+{\bar Q}_s-{\bar{Q}}_s{S}_s)e^{-1}_s\Big)\ud s.
    \end{aligned}\right.
\end{equation}
 
\noindent We define, $\bar{p}_t:= e_t\widetilde{p}_t$ and we have

\begin{gather*}
({C}^{1}_t)^{\top}e_t \widetilde{p}_t - e_t\big(\widetilde{p}_t (A^1_t+{C}^{1}_t)+\big(({A}^{1}_t)^\top+({C}^{1}_t)^\top\big) \widetilde{p}_t\big)
= -\big(\bar{p}_t (A^1_t+{C}^{1}_t)+(A^{1}_t)^\top\bar{p}_t\big),\\
({C}_t)^\top e_t \widetilde{p}_t - e_t \big(\widetilde{p}_t({A}_t + {C}_t) + ({A}_t+{C}_t)^{\top} \widetilde{p}_t - \widetilde{p}_t{B}_t ({R}_t e_t^{-1})^{-1} {B}_t^\top \widetilde{p}_t \\+({Q}_t+{\bar Q}_t-{\bar{Q}}_t{S}_t)e_t^{-1}\big)\\
= - \big(\bar{p}_t({A}_t + {C}_t) + ({A}_t)^{\top} \bar{p}_t - \bar{p}_t{B}_t {R}_t^{-1} {B}_t^\top \bar{p}_t +{Q}_t+{\bar Q}_t-{\bar{Q}}_t{S}_t\big).
\end{gather*}
Then, by the rough product formula, Theorem \ref{rough_product_formula}, 
we get that $\bar{p}_t$ satisfies \eqref{eq:rRiccati}.
So, $\bar{v}(t,x)$ is well defined\footnote{Uniqueness of \eqref{eq:rRiccati} comes by iterating local uniqueness.}.
Then, using the ansatz we decouple the first equation of \eqref{eq:FBRDE} and write it as
\begin{equation}\label{eq 034}
    \bar{x}_t = \mathbb{E}[\xi]+\int_{0}^t (A^1_s+ C^1_s)\bar{x}_s\ud\pmb{\eta}_s+ \int_0^t (A_s+C_s- B_s R_s^{-1} B_s^\top \bar{p}_s) \bar{x}_s \,\ud s.
\end{equation}
As a linear RDE, 
equation \eqref{eq 034} has a unique global solution in $\mathscr{D}^{\gamma,\beta}_{\eta}\big([0,T];\mathbb{R}^{d}\big)$. Finally, we define $${\bar{y}}_t:= \bar{v}(t,\bar{x}_t)=\bar{p}_t {\bar{x}}_t,$$ where ${\bar{x}}_t$ solves \eqref{eq 034}. To finish with existence, we need to verify that it satisfies the backward equation 
\begin{equation*}
 {\bar{y}}_t = ({Q}+{\bar Q}- {\bar Q} {S}){\bar{x}}_T+\int_t^T (A^1_s)^\top \bar{p}_s \bar{x}_s \ud \pmb{\eta}_s +\int_t^T \Big({Q}_s+{\bar Q}_s-{\bar Q}_s {S}_s +({A}_s)^{\top} \bar{p}_s\Big) {\bar{x}}_s\ud s,
\end{equation*}
in other words, we want to holds
\begin{align*}
   \bar{p}_t{\bar{x}}_t &= ({Q}+{\bar Q}- {\bar Q} {S}){\bar{x}}_T+\int_t^T (A^1_s)^\top \bar{p}_s \bar{x}_s \ud \pmb{\eta}_s +\int_t^T \Big({Q}_s+{\bar Q}_s-{\bar Q}_s {S}_s +{A}_s^{\top} \bar{p}_s\Big) {\bar{x}}_s\ud s\\
    &= \bar{p}_0\mathbb{E}[\xi]-\int_0^t (A^1_s)^\top \bar{p}_s \bar{x}_s \ud \pmb{\eta}_s-\int_{0}^t\Big({Q}_s+{\bar Q}_s-{\bar Q}_s {S}_s +{A}_s^{\top} \bar{p}_s\Big) {\bar{x}}_s\ud s,
\end{align*}
where $\bar{p}_t$ satisfies equation \eqref{eq:rRiccati} and $\bar{x}_t$ satisfies \eqref{eq 034}. This can be proven immediately by using the rough product formula Theorem \ref{rough_product_formula} as above. 
Indeed, we have
\begin{gather*}
-\Big(\bar{p}_t({A}_t + {C}_t) + {A}_t^{\top} \bar{p}_t - \bar{p}_t{B}_t {R}_t^{-1} {B}_t^\top \bar{p}_t +({Q}_t+{\bar Q}_t-{\bar{Q}}_t{S}_t)\Big){\bar{x}}_t\\
+\bar{p}_t\big({A}_t + {C}_t - {B}_t {R}_t^{-1} {B}_t^\top \bar{p}_t\big){\bar{x}}_t\\
=-\Big( {A}_t^{\top} \bar{p}_t +({Q}_t+{\bar Q}_t-{\bar{Q}}_t{S}_t)\Big){\bar{x}}_t
\end{gather*}
and so the proof for existence of a solution of \eqref{eq:FBRDE} is complete.

\noindent \textbf{Step 2. (Uniqueness)}
To this end we define
$\Xi^{\pmb{\eta}}_{t\leftarrow s}, \Xi^{\pmb{\eta}}_{s\leftarrow t}$ as the unique solutions of 
\begin{gather*}
     \Xi^{\pmb{\eta}}_{t\leftarrow s} = \text{Id}_{d\times d} + \int_s^t
     (A^1_r+{C}^{1}_r)\Xi^{\pmb{\eta}}_{r\leftarrow s}\ud \pmb{\eta}_r,\\
    \Xi^{\pmb{\eta}}_{s\leftarrow t} = \text{Id}_{d\times d} - \int_s^t \Xi^{\pmb{\eta}}_{s\leftarrow r}(A^1_r+{C}^{1}_r)\ud \pmb{\eta}_s.
\end{gather*}
Additionally, we define $$\Lambda^{\pmb{\eta}}(t,s):= \Xi^{\pmb{\eta}}_{t\leftarrow 0}\Xi^{\pmb{\eta}}_{0\leftarrow s}.$$ By virtue  of Theorem \ref{rough_product_formula}, and by repeating the proof of Theorem \ref{thm sol_rough_FBSDE} (without worrying about integrability properties as we are in a deterministic setting), we see that \eqref{eq:FBRDE} is equivalent to 
\begin{equation}\label{eq 0033}
\left\{\begin{aligned}
{\bar{x}}_t& = \Lambda^{\pmb{\eta}}(t,0)\mathbb{E}[\xi]+ \int_0^t \Lambda^{\pmb{\eta}}(t,s)\big(({A}_s+{C}_s) {\bar{x}}_s - {B}_s {R}_s^{-1} {B}_s^\top {\bar{y}}_s \big)\,\ud s,\\
{\bar{y}}_t
 &= \mathcal{K}^{\pmb{\eta}}(T,t)^\top({Q}+{\bar Q}- {\bar Q} {S}){\bar{x}}_T +\int_t^T \mathcal{K}^{\pmb{\eta}}(s,t)^\top\Big(({Q}_s+{\bar Q}_s-{\bar Q}_s {S}_s) {\bar{x}}_s+({A}_s)^{\top} {\bar{y}}_s\Big) \ud s,
\end{aligned}\right.
\end{equation}
and by defining $\widehat{\bar{x}}_t:=\Lambda^{\pmb{\eta}}(0,t)\bar{x}_t,\hspace{.1cm} \widehat{\bar{y}}_t:=\mathcal{K}^{\pmb{\eta}}(t,0)^\top \bar{y}_t$,
system \eqref{eq 0033} is equivalent to
\begin{equation}\label{eq 0034}
\left\{
\begin{aligned}
 &\widehat{\bar{x}}_t =\mathbb{E}[\xi]+ \int_0^t \Lambda^{\pmb{\eta}}(0,s)\big(({A}_s+{C}_s) \Lambda^{\pmb{\eta}}(s,0) \widehat{\bar{x}}_s - {B}_s {R}_s^{-1} {B}_s^\top \mathcal{K}^{\pmb{\eta}}(0,s)^\top \widehat{\bar{y}}_s \big)\,\ud s,\\&
\widehat{\bar{y}}_t
 = \mathcal{K}^{\pmb{\eta}}(T,0)^\top({Q}+{\bar Q}- {\bar Q} {S})\Lambda^{\pmb{\eta}}(T,0)\widehat{\bar{x}}_T \\&\quad\quad+\int_t^T \mathcal{K}^{\pmb{\eta}}(s,0)^\top\Big(({Q}_s+{\bar Q}_s-{\bar Q}_s {S}_s) \Lambda^{\pmb{\eta}}(s,0) \widehat{\bar{x}}_s+({A}_s)^{\top} \mathcal{K}^{\pmb{\eta}}(0,s)^\top\widehat{\bar{y}}_s\Big) \ud s.
\end{aligned}\right.
\end{equation}
From the analysis at the beginning of the subsection we have shown that \eqref{eq:FBRDE} has a solution of the form $(\bar{x}_t,\bar{p}_t \bar{x}_t)$.
 Hence, system \eqref{eq 0034} has a solution in the form $\big(\widehat{\bar{x}}_t,\mathcal{K}^{\pmb{\eta}}(t,0)^\top\bar{p}_t\Lambda^{\pmb{\eta}}(t,0) \widehat{\bar{x}}_t\big)$. In other words, there exists a decoupling field that gives a solution of \eqref{eq 0034}, so, from \cite[Proposition 4.8]{carmona2018probabilistic} this solution is unique. Thus, going backwards, \eqref{eq:FBRDE} also has a unique solution, and the proof is complete. 
\end{proof} 

\begin{rmk}
  \eqref{eq 0034} is a standard FBODE that is equivalent to \eqref{eq:FBRDE} and one may attempt to solve it directly. However, in order to deal with the asymmetric terms, existence of global solutions would require an additional condition of the form:
    \begin{enumerate}[label=(S5-R), ref=S5-R]
    \item\label{(S5-R)} $Q_t+\bar Q_t-\bar Q_t S_t\succeq 0$ and $Q+\bar Q-\bar Q S\succeq 0$. Additionally, with $
        e:[0,T]\rightarrow \mathbb{R}^{d\times d}$ unique solution of
    \begin{align*}
 e_t=\text{Id}_{d\times d}+\int_0^t (\widehat{C}_s)^\top e_s\ud s,
    \end{align*}
    and $\widehat{C}_t:= \mathcal{K}^{\pmb{\eta}}(0,t)C_t\mathcal{K}^{\pmb{\eta}}(t,0),$ there exist $\lambda^{(1)}, \lambda^{(2)}:[0,T]\rightarrow (0,\infty)$ such that for all $t\in[0,T],$
    \begin{align*}
        \Lambda^{\pmb{\eta}}(t,0)&= \lambda^{(1)}_t\mathcal{K}^{\pmb{\eta}}(t,0),\\
        e_t &= \lambda^{(2)}_t\text{Id}_{d\times d}.
    \end{align*}
    \end{enumerate}
\end{rmk}

\section{Stability of the rough LQ MFG}\label{sec:Stability}

Let $p\in\mathbb{N}, \gamma\in(\frac{1}{3},\frac{1}{2})$ and  $\mathcal{A}$ as in \eqref{eq:Arough}. In this section we establish stability of the solution to the MFG problem \eqref{eq rough_affineI}-\eqref{eq:rConsistency} with respect to initial conditions and the rough common noise. In particular, we prove (local Lipschitz) continuity properties for the equilibrium law, optimal value function and optimal control, respectively given by 

$$\mathscr{C}^{0,\gamma}_g([0,T];\mathbb{R}^p)\times \bigcap_{k=2}^{\infty}\mathbb{L}^k(\mathcal{F}_0,\mathbb{R}^d) \ni ( \pmb{\eta}, \xi)\longmapsto \mu^{ \pmb{\eta},\xi}=\mathcal{L}(X^{\pmb{\eta},\alpha^*(\pmb{\eta},\xi), \xi}_\cdot)\in  \mathcal{W}_k(C^\gamma([0,T];\R^d)),\;k\in[2,\infty),      $$
$$\alpha^*(\pmb{\eta},\xi) = \mathrm{argmin}_{\alpha\in  \mathcal A} \, \mathcal{J}(\alpha\, ;\pmb{\eta},(m,m')(\pmb{\eta},\mathbb{E}[\xi]),\xi),\footnotemark    $$
\footnotetext{Where $(m,m')=(m,m')(\pmb{\eta},\mathbb{E}[\xi])$ is the unique solution of \eqref{eq:FBRDE}.}and
$$\mathscr{C}^{0,\gamma}_g([0,T];\mathbb{R}^p)\ni \pmb{\eta}\longmapsto \mathcal{J}(\alpha^*;  \pmb{\eta},(m,m'),\xi)\in\R.$$ 
Above and throughout this section $\mathcal{W}_k(\mathcal{X}), k\in[2,\infty),$ denotes the $k-$Wasserstein space of probability measures over a Polish space $\mathcal{X}$ with topology induced by a metric $d.$ For $\mu^1,\mu^2\in\mathcal{W}_k(\mathcal{X})$ the $k-$Wasserstein distance is given by $\mathcal{W}^k_{k,\mathcal{X}}(\mu^1, \mu^2)=\inf_{\pi\in\Pi(\mu^1,\mu^2)}\iint_{\mathcal{X}\times \mathcal{X}}d^k(x,y)\ud \pi(x,y),$ where $\Pi$ denotes the family of couplings between $\mu^1, \mu^2.$

\begin{rmk}
As we have already mentioned, the kernel $\mathcal{K}^{\pmb{\eta}}$ and initial path $M^{\pmb{\eta}}$ depend on $(A^1,(A^1)')$ and $(A^1,(A^1)'),(C^1,(C^1)')$ respectively. For ease on notation, we continue to omit these dependencies. Nevertheless, they are implicitly assumed and are, in fact, important for the following stabilility analysis.
\end{rmk}

The following auxiliary lemma establishes continuity properties of the mean flows $(m, m')$ with respect to initial conditions and common noise.

\begin{lemma}\label{lem:mStability} Let $\xi_1,\xi_2\in \bigcap_{k=2}^{\infty}\mathbb{L}^k(\mathcal{F}_0,\mathbb{R}^d), T>0,$ $\gamma\in(\frac{1}{3},\frac{1}{2})$ and 
 $0<\beta'\leq \beta\leq\gamma$ such that $\gamma+\beta+\beta'>1$. For $\pmb{\eta}_1, \pmb{\eta}_2\in \mathscr{C}^{0,\gamma}_g([0,T];\mathbb{R}^p)$ that satisfy Assumption \eqref{(S4-R)} let $(m_i, (A^{1,i}+C^{1,i})m_i)\in \mathscr{D}^{\gamma,\beta}_{\eta_i}\big([0,T];\mathbb{R}^d\big)$ solve \eqref{eq:FBRDE} with coefficients $(A^{1,i}, (A^{1,i})'), (C^{1,i}, (C^{1,i})')\in \mathscr{D}^{\beta,\beta'}_{\eta_i}\big([0,T];L(\mathbb{R}^d,\mathbb{R}^{d\times p})\big)$ and initial conditions $\E[\xi_i],$ $i=1,2.$ Under Assumptions (\ref{S1-R}), (\ref{S2}), (\ref{S3}), if  $M>0$ is a constant such that 
$$
\sum_{i=1}^2\Big[\tnorm{\pmb{\eta}_i}_\gamma+\|(A^{1,i}, (A^{1,i})') \|_{\eta_i,\beta,\beta'} +\|(C^{1,i}, (C^{1,i})') \|_{\eta_i,\beta,\beta'} \Big]\leq M
$$
then up to a nonnegative multiplicative constant that depends on $M, T, \gamma, \beta,$  
\begin{equation*}
    \begin{aligned}
        \|m_1-m_2\|_{\infty;[0,T]}&+\|(m_1, (A^{1,1}+C^{1,1})m_1); (m_2, (A^{1,2}+C^{1,2})m_2) \|_{\eta_1, \eta_2,\gamma,\beta}\\&   \lesssim \|\xi_2-\xi_1\|_{k}+ \rho_{\gamma}(\pmb{\eta}_1, \pmb{\eta}_2)
        +\|(A^{1,1}, (A^{1,1})'); (A^{1,2}, (A^{1,2})') \|_{\eta_1, \eta_2,\beta, \beta'}\\&
        +\|(C^{1,1}, (C^{1,1})'); (C^{1,2}, (C^{1,2})')\|_{\eta_1, \eta_2,\beta,\beta'}.
        \end{aligned}
        \end{equation*}
\end{lemma}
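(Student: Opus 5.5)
We will exploit the explicit decoupled structure from the proof of Theorem \ref{thm:FBRDEwellposedness}: the mean flow is $m_i=\bar x^i$, where $\bar x^i$ solves the linear RDE \eqref{eq 034} with drift $(A_s+C_s-B_sR_s^{-1}B_s^\top\bar p^i_s)$, rough coefficient $(A^{1,i}+C^{1,i})$ and initial value $\E[\xi_i]$. The Riccati solution is $\bar p^i=e^i\widetilde p^i$, where $e^i$ solves \eqref{eq:eRDE} and $\widetilde p^i$ solves the symmetric rough Riccati equation \eqref{eq:symrRiccati}. The estimate will therefore follow from three stability results chained together: for $e^i$, for $\widetilde p^i$ (hence $\bar p^i$), and for $\bar x^i$.

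\textbf{Step 1 (the matrix exponential $e^i$).} Equation \eqref{eq:eRDE} is a linear RDE whose bounded drift $C^\top$ does not depend on $i$ and whose rough coefficient is $(C^{1,i})^\top$. We will use the standard local Lipschitz stability of linear RDEs in the rough path metric and in the controlled coefficients (Appendix results on linear RDEs, in the spirit of \cite[Sec 8.9]{friz2020course}). This gives
\[
\|(e^1,(e^1)');(e^2,(e^2)')\|_{\eta_1,\eta_2,\gamma,\beta}\lesssim_M \rho_\gamma(\pmb\eta_1,\pmb\eta_2)+\|(C^{1,1},(C^{1,1})');(C^{1,2},(C^{1,2})')\|_{\eta_1,\eta_2,\beta,\beta'}.
\]
By \eqref{(S4-R)} we have $e^i=\lambda^i\mathrm{Id}$, and $\lambda^i$ is continuous and positive on the compact interval $[0,T]$. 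We will use this to get two-sided bounds on $\lambda^i$ depending only on $M$, via a Gr\"onwall-type bound for the RDE satisfied by the inverse. The same bounds then give stability of $e^{-1}_i$, which appears in the coefficients of \eqref{eq:symrRiccati}.

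\textbf{Step 2 (the Riccati equation — main obstacle).} We need a priori bounds on $\widetilde p^i$, uniform in $M$, and a difference estimate. For the uniform bounds we will use that the solution is global (Theorem \ref{thm: sym_rough_riccati}), together with the bounds underlying that theorem's non-explosion argument. Concretely, $\widetilde p^i$ is positive semidefinite and bounded above by the value of the problem with zero control, which is a linear RDE quantity bounded in terms of $M$. This yields $\sup_t|\widetilde p^i_t|+\|(\widetilde p^i,(\widetilde p^i)')\|_{\eta_i,\gamma,\beta}\lesssim_M 1$.

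For the difference we write $\Delta=\widetilde p^1-\widetilde p^2$. On a small interval, subtracting the two equations gives a linear rough equation for $\Delta$ with a Lipschitz quadratic term, since both solutions are bounded. We will close a standard rough Gr\"onwall/contraction estimate in the two-path distance $\|\cdot;\cdot\|_{\eta_1,\eta_2,\gamma,\beta}$. The interval length is chosen depending only on $M$, and we then iterate backward finitely many times. This is the delicate part: the two integrands are controlled with respect to different paths $\eta_1,\eta_2$, so the remainders must be compared using the distance in Definition \ref{def:contro-rp}. Combined with Step 1 via the product estimates of Lemma \ref{productest}, this gives the same bound for $\bar p^1-\bar p^2$.

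\textbf{Step 3 (the forward equation).} Equation \eqref{eq 034} is a linear RDE for $\bar x^i$ whose drift depends continuously on $\bar p^i$. Applying the linear RDE stability once more, now including the perturbation of the drift (bounded by $\sup_t|\bar p^1_t-\bar p^2_t|$), we obtain a bound on $\|(m_1,\cdot);(m_2,\cdot)\|_{\eta_1,\eta_2,\gamma,\beta}$. The bound is in terms of $|\E\xi_1-\E\xi_2|\le\|\xi_1-\xi_2\|_k$, $\rho_\gamma$, and the coefficient distances. Since the Gubinelli derivatives are the products $(A^{1,i}+C^{1,i})m_i$, Lemma \ref{productest} converts the estimate on $m$ into the stated one. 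Finally, $\|m_1-m_2\|_\infty\le|m_1(0)-m_2(0)|+T^\gamma\|\delta(m_1-m_2)\|_\gamma$, which completes the proof.
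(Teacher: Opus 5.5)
Your proposal is correct and follows essentially the same route as the paper. The paper likewise writes $m_i=\bar x^i$ as the solution of the decoupled linear RDE \eqref{eq 034} with $\bar p^i=e^i\widetilde p^i$, bounds $\|e^2\widetilde p^2-e^1\widetilde p^1\|_{\infty;[0,T]}$ using the stability of \eqref{eq:eRDE} and of the symmetric rough Riccati equation, and then applies Proposition~\ref{prop:LinearRDEestimate} with the drift perturbation included. The one difference is in your Step~2: you sketch a re-derivation of the Riccati stability, via the a priori bound $0\preceq\widetilde p^i\preceq$ (zero-control value) and a local two-path contraction. That argument is sound, but the paper simply cites the local Lipschitz dependence already stated in Theorem~\ref{thm: sym_rough_riccati}, so the extra work is not needed.
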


\begin{proof} In view of the consistency condition \eqref{eq:rRconsistencyCondition}, any solution $(m, m')$ of \eqref{eq:FBRDE} satisfies 
$$  ({m}, {m}')=(\E[ X^{\alpha^*}], \E[ (X^{\alpha^*})'])=(\bar{x}, \bar{x}')=(\bar{x}, (A^1+C^1)\bar{x})    $$
on $[0,T]$ and $(\bar{x}, \bar{x}')\in \mathscr{D}^{\gamma,\beta}_{\eta}\big([0,T];\mathbb{R}^d\big)$ is the forward component of the FBRDE \eqref{eq:FBRDE}. From Theorem \ref{thm:FBRDEwellposedness}, the unique solution $(\bar{x}, \bar{y})$ of the latter satisfies the ansatz \eqref{eq:FBRDEansatz}. Consequently, we can write the solution pair in the form $( \bar{x}, \bar{p}\bar{x}),$ where $\bar{p}$ solves the (non-symmetric) rough Riccati equation \eqref{eq:rRiccati}. In turn, as shown in Section \ref{Section:FBRDE},  $\bar{p}=e\tilde{p},$ where $e$ is the (unique, global) solution of the linear RDE \eqref{eq:eRDE} and $\tilde{p}$ solves (uniquely and globally) the symmetric rough Riccati \eqref{eq:symrRiccati}. Thus, $(\bar{x}, \bar{y})$ takes the form $( \bar{x}, e\tilde{p}\bar{x})$ and, in view of Theorem \ref{thm: sym_rough_riccati}, the maps $\pmb{\eta}\longmapsto e, \tilde{p}$ are locally Lipschitz continuous. Returning to the FBRDE,  we substitute $\bar{y}=e\hat{p}\bar{x}$ to the dynamics of the forward component and obtain the decoupled linear RDE
\begin{equation}\label{eq:barxDecoupled}
     \bar{x}_t = \mathbb{E}[\xi]+\int_{0}^t (A^1_s+ C^1_s)\bar{x}_s\ud\pmb{\eta}_s+ \int_0^t \big((A_s+C_s) \bar{x}_s - B_s R_s^{-1} B_s^\top e_s\tilde{p}_s\bar{x}_s \big)\,\ud s.  
\end{equation}
To obtain the desired stability estimate, let $(\bar{x}_i, \bar{x}'_i)$ 
and $\bar{p}^i = e^i\tilde{p}^i, i=1,2,$ denote respectively solutions to  \eqref{eq:barxDecoupled} and \eqref{eq:rRiccati}   driven by  $\pmb{\eta}_1, \pmb{\eta}_2\in \mathscr{C}^{0,\gamma}_g([0,T];\mathbb{R}^p)$ and with initial conditions $\xi_1, \xi_2.$ Applying a stability estimate for (linear, deterministic) RDEs (see Proposition \ref{prop:LinearRDEestimate}), and noting that the drift terms also depend on the rough drivers, we obtain
\begin{equation*}
    \begin{aligned}
        \|\bar{x}_1-\bar{x}_2\|_{\infty;[0,T]}&+\|(\bar{x}_1, \bar{x}'_1); (\bar{x}_2, \bar{x}'_2) \|_{\eta_1,\eta_2,\gamma,\beta}\\&    \lesssim |\E[\xi_2]-\E[\xi_1]|+\rho_{\gamma}(\pmb{\eta}_1, \pmb{\eta}_2)\\&
        +\|(A^{1,1}, (A^{1,1})'); (A^{1,2}, (A^{1,2})') \|_{\eta_1, \eta_2,\beta,\beta'}\\&
        +\|(C^{1,1}, (C^{1,1})'); (C^{1,2}, (C^{1,2})') \|_{\eta_1, \eta_2,\beta,\beta'}\\&+\big\|e^2\tilde{p}^2-e^1\tilde{p}^1\big\|_{\infty;[0,T]},
    \end{aligned}
\end{equation*}
up to a nonnegative multiplicative constant that depends on $\tnorm{\pmb{\eta}_i}_\gamma, i=1,2$ and norms of the matrices associated to the linear vector fields. We bound the last term from above by using the triangle inequality along with local-Lipschitz estimates for the (symmetric) rough Riccati and linear RDE; see Proposition \ref{prop:LinearRDEestimate}, Theorem \ref{thm: sym_rough_riccati} respectively. In particular, up to constants depending continuously on the aforementioned parameters 
\begin{equation*}
    \begin{aligned}
\big\|e^2\tilde{p}^2-e^1\tilde{p}^1\big\|_{\infty;[0,T]}&\lesssim
\rho_{\gamma}(\pmb{\eta}_1, \pmb{\eta}_2)
        +\|(A^{1,1}, (A^{1,1})'); (A^{1,2}, (A^{1,2})') \|_{\eta_1,\eta_2,\beta,\beta'}
        \\&+\|(C^{1,1}, (C^{1,1})'); (C^{1,2}, (C^{1,2})')\|_{\eta_1, \eta_2,\beta,\beta' }.
     \end{aligned}
\end{equation*}
The proof is complete. \end{proof}

We turn to the proof of our main stability result which was described in the introduction and we state in detail below.

\begin{thm}[Stability of the rough MFG solution]\label{thm:StabilityMain} Let $\xi_1,\xi_2\in \bigcap_{k=2}^{\infty}\mathbb{L}^k(\mathcal{F}_0,\mathbb{R}^d), T>0,$ $\gamma\in(\frac{1}{3},\frac{1}{2})$ and 
 $0<\beta'\leq \beta\leq\gamma$ such that $\gamma+\beta+\beta'>1$. For $\pmb{\eta}_1, \pmb{\eta}_2\in \mathscr{C}^{0,\gamma}_g([0,T];\mathbb{R}^p)$ that satisfy Assumption \eqref{(S4-R)} let $(m_i, m'_i)\in \mathscr{D}^{\gamma,\beta}_{\eta_i}\big([0,T];\mathbb{R}^d\big)$ solve \eqref{eq:FBRDE} with coefficients $(A^{1,i}, (A^{1,i})'),$   $ (C^{1,i}, (C^{1,i})')\in \mathscr{D}^{\beta,\beta'}_{\eta_i}\big([0,T];L(\mathbb{R}^d,\mathbb{R}^{d\times p})\big)$ and initial conditions $\E[\xi_i]$  and $(X^{\pmb{\eta}_i, \alpha^{*, i}, (m_i, m_i')}, \alpha^{*, i}),$ $i=1,2$ be the corresponding solutions to the MFG \eqref{eq rough_affineI}-\eqref{eq:rConsistency}. Under Assumptions (\ref{S1-R}), (\ref{S2}), (\ref{S3}), if  $M>0$ is a constant such that 
$$
\sum_{i=1}^2\Big[\tnorm{\pmb{\eta}_i}_\gamma+\|(A^{1,i}, (A^{1,i})') \|_{\eta_i,\beta,\beta'} +\|(C^{1,i}, (C^{1,i})') \|_{\eta_i,\beta,\beta'} \Big]\leq M
$$
then up to a nonnegative multiplicative constant that depends on $M, T, \gamma, \beta, \xi,$ we have 
\begin{equation}\label{eq:MainStabilityEstimate}
    \begin{aligned}       \sup_{t\in[0,T]}&\|\alpha^{*,2}_t-\alpha^{*,1}_t\|_{k}+  \|\|X^2-X^1\|_{\gamma}\|_{k}+\mathcal{W}_{k, C^\gamma}(\mu^1, \mu^2)\\&+\big|\mathcal{J}(\alpha^{*,2};\pmb{\eta}_2,(m_2,m'_2),\xi_2)-\mathcal{J}(\alpha^{*,1};\pmb{\eta}_1,(m_1,m_1'),\xi_1)\big|
        \\&\lesssim \|\xi_2-\xi_1\|_k+\rho_{\gamma}(\pmb{\eta}_2, \pmb{\eta}_1)+\|(A^{1,1}, (A^{1,1})'); (A^{1,2}, (A^{1,2})') \|_{\eta_1, \eta_2,\beta,\beta' }
        \\&+\|(C^{1,1}, (C^{1,1})'); (C^{1,2}, (C^{1,2})')\|_{\eta_1, \eta_2,\beta,\beta' },
    \end{aligned}
\end{equation}
where $\mu^i=\mathcal{L}(X^i)=\mathcal{L}(X^{\pmb{\eta}_i, \alpha^{*, i}, (m_i, m'_i),\xi_i})\in \mathcal{W}_k( C^\gamma([0,T];\R^d  ),$ and the optimal controls $$\alpha^{*,i}=\mathrm{argmin}_{\alpha\in  \mathcal A} \, \mathcal{J}(\alpha\,  ;\pmb{\eta}_i,(m_i,m_i'),\xi_i),\;\;i=1,2$$
take the explicit form $ \alpha^{*,i}_t = - R_t^{-1} B_t^\top Y^{i}_t, t\in[0,T]$ with $Y^i$ solving \eqref{eq:rFBSDE}. Finally, if the controlled rough paths $A^{1,i}, C^{1,i}, i=1,2$ are such that 
\begin{equation}\label{eq:ControlledVectorFieldsContinuity}
    \|(A^{1,1}, (A^{1,1})'); (A^{1,2}, (A^{1,2})') \|_{\eta_1, \eta_2,\beta,\beta'  }+\|(C^{1,1}, (C^{1,1})'); (C^{1,2}, (C^{1,2})')\|_{\eta_1,\eta_2,\beta,\beta'  }\lesssim  \rho_{\gamma}(\pmb{\eta}_2, \pmb{\eta}_1), 
\end{equation}
we obtain the continuity estimate
\begin{equation*}
    \begin{aligned}       \sup_{t\in[0,T]}&\|\alpha^{*,2}_t-\alpha^{*,1}_t\|_k+  \|\|X^2-X^1\|_{\gamma}\|_{k}+\mathcal{W}_{k, C^\gamma}(\mu^1, \mu^2)\\&+\big|\mathcal{J}(\alpha^{*,2};\pmb{\eta}_2,(m_2,m_2'),\xi_2)-\mathcal{J}(\alpha^{*,1};\pmb{\eta}_1,(m_1,m_1'),\xi_1)\big|\lesssim \|\xi_2-\xi_1\|_k+\rho_{\gamma}(\pmb{\eta}_2, \pmb{\eta}_1).
    \end{aligned}
\end{equation*} 

\end{thm}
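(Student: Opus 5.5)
The plan is to reduce everything to the explicit decoupled representation of the equilibrium and then propagate stability of its finitely many deterministic building blocks. By Theorem \ref{thm:FBRDEwellposedness} and Theorem \ref{thm sol_rough_FBSDE}, for each $i=1,2$ the equilibrium is given as follows. The mean flow $(m_i,m_i')$ is the forward component of \eqref{eq:FBRDE}. The optimal state $X^i$ solves the rough SDE \eqref{eq:rFBSDE} with $m=m_i$. The adjoint is
\[
Y^i_t=\mathcal{K}^{\pmb{\eta}_i}(0,t)^\top P^i_t\mathcal{K}^{\pmb{\eta}_i}(0,t)\big(X^i_t-M^{\pmb{\eta}_i}((m_i,m_i'))_t\big)+\mathcal{K}^{\pmb{\eta}_i}(0,t)^\top\Pi^i_t,
\]
with $P^i,\Pi^i$ solving \eqref{eq symmetric_Riccati}. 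Substituting this decoupling field into the forward equation turns $X^i$ into the solution of a \emph{linear} rough SDE,
\[
\ud X^i=\big(A X^i-BR^{-1}B^\top G^i X^i+h^i\big)\ud t+\Sigma\,\ud W+(A^{1,i}X^i+C^{1,i}m_i)\ud\pmb{\eta}_i,
\]
where $G^i$ and $h^i$ are deterministic continuous coefficients built from $\mathcal{K}^{\pmb{\eta}_i}$, $P^i$, $\Pi^i$, $M^{\pmb{\eta}_i}$ and $m_i$.

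\textbf{Step 1 (deterministic ingredients).} I will show that each of the following maps is locally Lipschitz, with constants depending on $M$, in the data $\rho_\gamma(\pmb{\eta}_1,\pmb{\eta}_2)$ and the $A^1$, $C^1$ distances:
\begin{itemize}
\item the flows $U^{\pmb{\eta}}_{t\leftarrow 0}$ and $U^{\pmb{\eta}}_{0\leftarrow t}$, hence $\mathcal{K}^{\pmb{\eta}}$, in sup norm, via Proposition \ref{prop:LinearRDEestimate};
\item $M^{\pmb{\eta}}((m,m'))$, using the rough integral stability together with Lemma \ref{productest} and Lemma \ref{lem:mStability} for $m_i$;
\item $P$ and $\Pi$, since \eqref{eq symmetric_Riccati} is a classical Riccati/linear ODE whose coefficients \eqref{defn 3.8} depend Lipschitz-continuously (in sup norm) on $\mathcal{K}$, $M$ and $m$, with a priori bounds from Theorem \ref{thm: sym_rough_riccati}. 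A Gr\"onwall argument on the ODE difference, using the uniform bound on $P$, gives the Lipschitz estimate.
\end{itemize}
As a result, $\|G^1-G^2\|_\infty+\|h^1-h^2\|_\infty$ is bounded by the right-hand side of \eqref{eq:MainStabilityEstimate}, with $|\E\xi_1-\E\xi_2|\le\|\xi_1-\xi_2\|_k$.

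\textbf{Step 2 (state and control).} I will use the mild form $X^i=\mathcal{K}^{\pmb{\eta}_i}(t,0)(\widehat X^i+\dots)$ of Remark \ref{rmk 3.2}, where $\widehat X^i$ solves a classical linear SDE with coefficients from Step 1. Standard $L^k$ SDE stability, via BDG and Gr\"onwall, gives
\[
\big\|\sup_t|\widehat X^1_t-\widehat X^2_t|\big\|_k\lesssim \text{RHS}.
\]
Multiplying back by $\mathcal{K}^{\pmb{\eta}_i}$ then yields the sup-norm estimate for $X^1-X^2$, and via the affine formula also for $Y^1-Y^2$, hence for $\alpha^{*,1}-\alpha^{*,2}=-R^{-1}B^\top(Y^1-Y^2)$.

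The main obstacle is the $\gamma$-H\"older norm $\|\|X^2-X^1\|_\gamma\|_k$, because the increments involve the rough integral against two different drivers. For this I would instead invoke the rough SDE stability of \cite[Thm 3.11]{bugini2024malliavin} and \cite[Thm 3.5]{horst2025pontryagin} for linear RSDEs with drifts differing by the Step 1 bounds. Their estimates control $\|\delta(X^1-X^2)\|_{\gamma,k}$, and a Kolmogorov-type argument upgrades moments of increments to $\|\|\cdot\|_{\gamma}\|_k$. To leave room for this, I will work with some $\gamma'>\gamma$ with $\pmb{\eta}_i$ still $\gamma'$-regular, or alternatively use the pathwise representation: the only stochastic integral is $\int\mathcal{K}(t,s)\Sigma_s\,\ud W_s$, which factorises as $\mathcal{K}(t,0)\int\mathcal{K}(0,s)\Sigma_s\,\ud W_s$, so its H\"older norm is controlled by products of deterministic $\gamma$-H\"older flows and a Brownian-integral H\"older norm having all moments. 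I would try the factorisation first, since it is elementary.

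\textbf{Step 3 (law and value).} The Wasserstein bound follows from the synchronous coupling $(X^1,X^2)$ on the same space: $\mathcal{W}_{k,C^\gamma}(\mu^1,\mu^2)\le\|\|X^1-X^2\|_\gamma\|_k$. For the cost, I write $\mathcal{J}^i$ as the expectation of quadratic forms in $(X^i,\alpha^{*,i},m_i)$. The differences of these quadratic forms are bounded by H\"older's inequality using the uniform $L^{2k}$ moment bounds (Remark \ref{rmk 3.10}, uniform in $M$) times the differences from Steps 1–2. Under \eqref{eq:ControlledVectorFieldsContinuity} the final estimate follows by substitution.
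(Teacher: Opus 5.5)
Your proposal is correct and follows essentially the same route as the paper. Both arguments rest on the same ingredients:
\begin{itemize}
\item the Volterra/mild representation, with the factorisation $\mathcal{K}^{\pmb{\eta}}(t,s)=\mathcal{K}^{\pmb{\eta}}(t,0)\mathcal{K}^{\pmb{\eta}}(0,s)$;
\item stability of $\mathcal{K}^{\pmb{\eta}}$, $M^{\pmb{\eta}}$ and the Riccati data, via Proposition \ref{prop:LinearRDEestimate}, rough-integral stability and Lemma \ref{lem:mStability};
\item the decoupling field, which bounds $\alpha^{*,2}-\alpha^{*,1}$ by $X^2-X^1$ plus data;
\item BDG and Gr\"onwall for the It\^o part, the synchronous coupling for $\mathcal{W}_{k,C^\gamma}$, and the quadratic structure of the cost.
\end{itemize}
The only difference is organisational. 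You first obtain sup-norm bounds for the transformed classical SDE $\widehat X$ and then upgrade to the $\gamma$-H\"older norm, whereas the paper estimates the increments $\delta X_{s,t}$ directly. Your factorisation route suffices on its own, so the fallback with $\gamma'>\gamma$ is not needed; it would in any case be unavailable, since $\pmb{\eta}_i\in\mathscr{C}^{0,\gamma}_g$ is only assumed $\gamma$-H\"older.
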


\begin{proof} Starting from the optimal state, we adopt the shorter notation $$ X^i=X^{\pmb{\eta}_i,\alpha^{*, i}, (m_i, m'_i),\xi_i}, i=1,2,$$ with $\alpha^{*, i}, (m_i, m'_i)$ denoting the optimal control and mean paths of the MFG with corresponding common rough noise $\pmb{\eta}_i.$ In view of Theorem \ref{thm:VolterraEquivalence}, we express the dynamics in Volterra form and, for $0\leq s<t\leq T,$ 
\begin{equation}\label{eq:OptimalStateIncrement}
    \begin{aligned}
     \delta X^i_{s,t}\equiv X^i_t&-X^i_s=  \bigg(\mathcal{K}^{\pmb{\eta_i}}(t,0)-\mathcal{K}^{\pmb{\eta_i}}(s,0)\bigg)\xi_i+M^{\pmb{\eta}_i}\big( (m_i, m'_i)\big)_t-M^{\pmb{\eta}_i}\big( (m_i, m'_i)\big)_s\\&
     +\int_0^t\mathcal{K}^{\pmb{\eta_i}}(t,r)\bigg(A_r X^i_r+ \alpha^{*, i}_r+C_rm^i_r\bigg)\ud r-\int_0^s\mathcal{K}^{\pmb{\eta_i}}(s,r)\bigg(A_r X^i_r+ \alpha^{*, 1}_r+C_rm^i_r\bigg)\ud r\\&
     +\int_0^t\mathcal{K}^{\pmb{\eta_i}}(t,r)\Sigma_r\ud W_r-\int_0^s\mathcal{K}^{\pmb{\eta_i}}(s,r)\Sigma_r\ud W_r\\&
=:\delta(\mathcal{K}^{\pmb{\eta_i}})_{s,t}\xi_{i}+\delta M^{\pmb{\eta}_i}\big( (m_i, m'_i)\big)_{s,t}\\&
+\int_s^t\mathcal{K}^{\pmb{\eta_i}}(t,r)\bigg(A_r X^i_r+ \alpha^{*, i}_r+C_rm^i_r\bigg)\ud r\\&+\delta(\mathcal{K}^{\pmb{\eta_i}})_{s,t}\int_0^s\mathcal{K}^{\pmb{\eta_i}}(0,r)\bigg(A_r X^i_r+ \alpha^{*, i}_r+C_rm^i_r\bigg)\ud r\\&
    +\int_s^t\mathcal{K}^{\pmb{\eta_i}}(t,r)\Sigma_r\ud W_r+\delta(\mathcal{K}^{\pmb{\eta_i}})_{s,t}\int_0^s\mathcal{K}^{\pmb{\eta_i}}(0,r)\Sigma_r\ud W_r.
    \end{aligned}
\end{equation}

Note that the first two terms are deterministic and state-independent. Since the Volterra kernel \eqref{eq:RoughVolterraKerneldef} solves the linear matrix-valued RDE \eqref{eq:KkernelRDE} and $M^{\eta_i}$ is given by the (deterministic) rough integral \eqref{eq:MroughIntegral} we have the estimates
\begin{equation}\label{eq:KHolderEstimate}
    \begin{aligned}       \|\delta(\mathcal{K}^{\pmb{\eta_2}})\xi_2-\delta(\mathcal{K}^{\pmb{\eta_1}})\xi_1\|_{\gamma}\lesssim |\xi_2-\xi_1|+\rho_{\gamma}(\pmb{\eta}_2, \pmb{\eta}_1)+ \|(A^{1,1}, (A^{1,1})'); (A^{1,2}, (A^{1,2})')\|_{\eta_1, \eta_2, \beta, \beta' },
    \end{aligned}
\end{equation}
\begin{equation}\label{eq:StabilityT1bnd}
    \begin{aligned}       &\|\delta M^{\pmb{\eta}_2}\big( (m_2, m'_2)\big)-\delta M^{\pmb{\eta}_1}\big( (m_1, m'_1)\big)\|_{\gamma}\\&\lesssim  \rho_{\gamma}(\pmb{\eta}_2, \pmb{\eta}_1)+ \|(A^{1,1}, (A^{1,1})'); (A^{1,2}, (A^{1,2})')\|_{\eta_1, \eta_2, \beta, \beta' }\\&
    + \bigg\|\int_{0}^\cdot\big(Z_2^{\pmb{\eta_2},(m_2,m_2')},(Z_2')^{\pmb{\eta}_2,(m_2,m_2')}\big)_s\ud \pmb{\eta}_{2,s}-\int_{0}^\cdot\big(Z_1^{\pmb{\eta_1},(m_1,m_1')},(Z_1')^{\pmb{\eta}_1,(m_1,m_1')}\big)_s\ud \pmb{\eta}_{1,s}\bigg\|_{\gamma}.
    \end{aligned}
\end{equation}
By classical stability estimates for rough integrals (see e.g. \cite[Chapter 4]{friz2020course}), the last term is bounded from above by 
\begin{equation*}
    \begin{aligned}
        \rho_{\gamma}(\pmb{\eta}_2, \pmb{\eta}_1)+\big\|\big(Z_2^{\pmb{\eta_2},(m_2,m_2')},(Z_2')^{\pmb{\eta}_2,(m_2,m_2')}\big);\big(Z_1^{\pmb{\eta_1},(m_1,m_1')},(Z_1')^{\pmb{\eta}_1,(m_1,m_1')}\big)    \big\|_{\eta_1, \eta_2, \beta, \beta'}.
    \end{aligned}
\end{equation*}
In turn, in view of Definition \eqref{eq:ZetaControlledPathDef}, the second term above is bounded from above by 
\begin{equation*}
    \begin{aligned}
        \big\|&(C^{1,1}, (C^{1,1})'); (C^{1,2}, (C^{1,2})')    \big\|_{\eta_1, \eta_2, \beta, \beta' }+\big\|\big(m_2, m_2'\big);\big(m_1, m_1'\big)    \big\|_{\eta_1,\eta_2, \beta, \beta' }\\&
        +\big\| \big(\mathcal{K}^{\pmb{\eta_2}}(0,\cdot), (\mathcal{K}^{\pmb{\eta_2}})'(0,\cdot)\big); \big(\mathcal{K}^{\pmb{\eta_1}}(0,\cdot), (\mathcal{K}^{\pmb{\eta_1}})'(0,\cdot)\big)    \|_{\eta_1, \eta_2,\beta, \beta' }.
    \end{aligned}
\end{equation*}
We complete the estimate for the second term by invoking Lemma \ref{lem:mStability} along with stability estimates for the linear RDE \eqref{eq:KkernelRDE}. These bounds then furnish
\begin{equation}\label{eq:StabilityT2bnd}
    \begin{aligned}
        \|\delta M^{\pmb{\eta}_2}\big( (m_2, m'_2)\big)&-\delta M^{\pmb{\eta}_1}\big( (m_1, m'_1)\big)\|_{\gamma}\\&\lesssim \rho_{\gamma}(\pmb{\eta}_2, \pmb{\eta}_1)+ \big\|(C^{1,1}, (C^{1,1})'); (C^{1,2}, (C^{1,2})')   \big\|_{\eta_1, \eta_2, \beta, \beta' }
        \\&+ \big\|(A^{1,1}, (A^{1,1})'); (A^{1,2}, (A^{1,2})')    \big\|_{\eta_1, \eta_2, \beta, \beta' }.
    \end{aligned}
\end{equation}
Turning to the third term in \eqref{eq:OptimalStateIncrement} we have 
\begin{equation}\label{eq:StabilityT3prebnd}
    \begin{aligned}
\bigg|\int_s^t\mathcal{K}^{\pmb{\eta_2}}&(t,r)\bigg(A_r X^2_r+ \alpha^{*, 2}_r+C_rm^2_r\bigg)\ud r- \int_s^t\mathcal{K}^{\pmb{\eta_1}}(t,r)\bigg(A_r X^1_r+ \alpha^{*, 1}_r+C_rm^1_r\bigg)\ud r\bigg|\\&
\lesssim   \|\delta(\mathcal{K}^{\pmb{\eta_2}})-\delta(\mathcal{K}^{\pmb{\eta_1}})\|_{\gamma}\int_s^t(t-r)^\gamma\bigg(1+|\alpha^{*,2}_r|+|m_r^2|\bigg)\ud r\\&
+\int_s^t\bigg( |A_r||X^1_r-X^2_r|+|\alpha^{*,2}_r-\alpha^{*,1}_r|+|m^{2}_r-m^{1}_r|  \bigg)\ud r\\&
\lesssim \bigg[\rho_{\gamma}(\pmb{\eta}_2, \pmb{\eta}_1)+ \|(A^{1,1}, (A^{1,1})'); (A^{1,2}, (A^{1,2})')\|_{\eta_1, \eta_2,\beta,\beta';[0,T] }\bigg](t-s)^\gamma\int_0^T\bigg(1+|\alpha^{*,2}_r|\bigg)\ud r\\&
+\int_0^T|X^1_r-X^2_r|\ud r+\int_s^t|\alpha^{*,2}_r-\alpha^{*,1}_r|\ud r+(t-s)\|m^2-m^1\|_{\infty;[0,T]},
    \end{aligned}
\end{equation}
almost surely, where we used \eqref{eq:KHolderEstimate} as well as standard estimates for $\|m^2\|_{\infty;[0,T]}, \| U^{\pmb{\eta}_1}_{0\leftarrow \cdot}\|_{\infty;[0,T]}$ which are absorbed in the multiplicative constant,  to obtain the fourth line. Furthermore notice that, by virtue of Lemma \eqref{lem:mStability} the last term is upper bounded by 
\begin{equation*}
    \begin{aligned}
        (t-s)\bigg( \|\xi_2-\xi_1\|_{k}+ &\rho_{\gamma}(\pmb{\eta}_1, \pmb{\eta}_2)
        +\|(A^{1,1}, (A^{1,1})'); (A^{1,2}, (A^{1,2})')\|_{\eta_1, \eta_2,\beta,\beta' }
        \\&+\|(C^{1,1}, (C^{1,1})'); (C^{1,2}, (C^{1,2})') \|_{\eta_1, \eta_2,\beta,\beta' }   \bigg).  
    \end{aligned}
\end{equation*}    
In order to proceed, we need an estimate on the optimal controls. To this end, recall from \eqref{stoch_max_principle rough FBSDE} that
$$     \alpha^{*,i}_t = - R_t^{-1} B_t^\top Y^{i}_t   $$
and the unique, global solution $Y^i$ of the backward RDE satisfies the ansatz \eqref{eq:ansatzRFBSDE}
$$   Y_t^i=\mathcal{K}^{\pmb{\eta}_i}(0,t)^\top P^{\pmb{\eta}_i}_t  \mathcal{K}^{\pmb{\eta}_i}(0,t)\big({X}^{i}_t -M^{\pmb{\eta}_i}\big((m_i,m_i')\big)_t\big) + \mathcal{K}^{\pmb{\eta}_i}(0,t)^\top\Pi^{\pmb{\eta}_i,(m_i,m_i')}_t.$$
Thus, since $R, B$ are deterministic and bounded uniformly over $t\in[0,T],$ we have
\begin{equation*}
    |\alpha^{*,2}_r-\alpha^{*,1}_r|\lesssim |Y^2_r-Y^1_r|
\end{equation*}
almost surely for all $r\in[0, t].$
Next, notice that $P^{\pmb{\eta}_i},\Pi^{\pmb{\eta}_i,(m_i,m_i')}$ are (differentiable-in-time) solutions of the deterministic system of backward matrix equations  \eqref{eq symmetric_Riccati} with coefficients $\widehat{A}^i, \widehat{B}^i, \widehat{Q}^i, \widehat{\bar{Q}}^i,\mathcal{K}^{\pmb{\eta}_i}$ depending on $\pmb{\eta}_i, i=1,2.$ Thus, up to multiplicative constants that depend polynomially on the rough path norms $\tnorm{\pmb{\eta}_i}_\gamma$ we have the almost-sure estimate
\begin{equation}\label{eq:controlStabilityPrebnd}
    \begin{aligned}
        |\alpha^{*,2}_r-\alpha^{*,1}_r|&\lesssim \|\xi_2-\xi_1\|_{k}+ \rho_{\gamma}(\pmb{\eta}_2, \pmb{\eta}_1)+ \big\|(C^{1,1}, (C^{1,1})'); (C^{1,2}, (C^{1,2})')   \big\|_{\eta_1, \eta_2, \beta, \beta' }
        \\&+ \big\|(A^{1,1}, (A^{1,1})'); (A^{1,2}, (A^{1,2})')    \big\|_{\eta_1, \eta_2, \beta, \beta'}+|X^2_r-X^1_r|.
    \end{aligned}
\end{equation}
Substituting the latter back to \eqref{eq:StabilityT3prebnd} we obtain, up to deterministic constants, 
\begin{equation}\label{eq:StabilityT3bnd}
    \begin{aligned}
&\bigg|\int_s^t\mathcal{K}^{\pmb{\eta_2}}(t,r)\bigg(A_r X^2_r+ \alpha^{*, 2}_r+C_rm^2_r\bigg)\ud r- \int_s^t\mathcal{K}^{\pmb{\eta_1}}(t,r)\bigg(A_r X^1_r+ \alpha^{*, 1}_r+C_rm^1_r\bigg)\ud r\bigg|\\&
\lesssim (t-s)^\gamma\bigg( \|\xi_2-\xi_1\|_{k}+\rho_{\gamma}(\pmb{\eta}_1, \pmb{\eta}_2)
        +\|(A^{1,1}, (A^{1,1})'); (A^{1,2}, (A^{1,2})') \|_{\eta_1, \eta_2, \beta, \beta'}
       \\& +\|(C^{1,1}, (C^{1,1})'); (C^{1,2}, (C^{1,2})') \|_{\eta_1, \eta_2,\beta, \beta' }   \bigg)
        +\int_0^T|X^2_r-X^1_r|\ud r
\end{aligned}
\end{equation}
and an identical pathwise bound holds for the fourth term in \eqref{eq:OptimalStateIncrement}; its proof is omitted. 

It remains to estimate the It\^o integral terms in \eqref{eq:OptimalStateIncrement}. Both can be treated similarly via Doob's inequality. For example, for the first term on the last line we obtain, for any $k\geq 1$
\begin{equation}\label{eq:StabilityT5bnd}
    \begin{aligned}
      \bigg\|  \int_s^t(\mathcal{K}^{\pmb{\eta_2}}&(t,r)-\mathcal{K}^{\pmb{\eta_2}}(t,r))\Sigma_r\ud W_r  \bigg\|_{k}\\& \lesssim (t-s)^{1/2}\bigg( \rho_{\gamma}(\pmb{\eta}_2, \pmb{\eta}_1)+ \|(A^{1,1}, (A^{1,1})'); (A^{1,2}, (A^{1,2})')\|_{\eta_1, \eta_2, \beta, \beta' }  \bigg).
    \end{aligned}
\end{equation}
Collecting the estimates \eqref{eq:StabilityT1bnd}, \eqref{eq:StabilityT2bnd}, \eqref{eq:StabilityT3bnd}, \eqref{eq:StabilityT5bnd}
and applying a standard Gr\"onwall inequality yields the following stability bound for the optimal state:
\begin{equation}\label{eq:XstabilityEstimate}
    \begin{aligned}
        \|\|X^2-X^1\|_{\gamma}\|_{k}&\lesssim \|\xi_2-\xi_1\|_{k}+ \rho_{\gamma}(\pmb{\eta}_2, \pmb{\eta}_1)+ \big\|(C^{1,1}, (C^{1,1})'); (C^{1,2}, (C^{1,2})')   \big\|_{\eta_1, \eta_2, \beta, \beta'}
        \\&+ \big\|(A^{1,1}, (A^{1,1})'); (A^{1,2}, (A^{1,2})')    \big\|_{\eta_1, \eta_2, \beta, \beta' }.
    \end{aligned}
\end{equation}
Returning to \eqref{eq:controlStabilityPrebnd} we obtain the same bound for
$$\sup_{t\in[0,T]}\|\alpha^{*,2}_t-\alpha^{*,1}_t\|_k.$$
The estimate for the optimal costs $J(\alpha^{i,*}, \xi_i)$ follows immediately from the previous estimates and the fact that the cost function is quadratic in the state, mean vectors and controls.
Finally, we consider the $k-$Wasserstein distance between the equilibrium laws $\mu^i, i=1,2,$ viewed as measures on the H\"older space $C^\gamma([0,T];\R^d)$  (with norm $\|\cdot\|_\gamma$ defined by the sum of supremum norm and H\"older seminorm). The desired bound follows from \eqref{eq:XstabilityEstimate} along with the straightforward inequality
$$\mathcal{W}_{k, C^\gamma}(\mu^1, \mu^2)^k\leq \E[ \| X^2-X^1     \|^k_{\gamma}  ]     $$
which follows from independent coupling of the marginal laws $\mathcal{L}(X^i)=\mu^i, i=1,2.$
\end{proof}

\begin{rmk}\label{rmk 5.4}
An example where  condition \eqref{eq:ControlledVectorFieldsContinuity} is trivially satisfied is given in the case of time-independent coefficients $A^{1,i}_t\equiv A^{i}, C^{1,i}_t\equiv C^{i}$ for all $t$ and for $i=1,2.$ In this case, the last two terms on the right-hand side of the stability estimate \eqref{eq:MainStabilityEstimate} vanish.
\end{rmk}


\section{Randomization}\label{sec:Randomization}
For this section let
 $
 \Omega:=\Omega'\times\Omega'', \mathbb{P}=\overline{\mathbb{P}'\otimes\mathbb{P}''}$\footnote{$\mathbb{P}', \mathbb{P}''$ are assumed complete over appropriate $\sigma-$algebras.} that supports two (independent) Brownian motions $W(\omega)=W(\omega'), {W}^\perp(\omega)= {W}^\perp (\omega'')$. We denote by $(\mathcal{F}_t)_{t\in\mathbb{R_+}}, (\mathcal{F}^{\perp}_t)_{t\in\mathbb{R_+}}$ the augmented filtrations generated by $W, W^\perp$ respectively on $\Omega'\times\mathbb{R}_+$ and $\Omega''\times\mathbb{R}_+$. It is well known that these filtrations satisfy the usual conditions, see for example \cite[Section 2.7]{karatzas2014brownian}. By abusing notation we are going to denote the filtrations $(\mathcal{F}_t\times\Omega'')_{t\in\mathbb{R_+}}, (\Omega'\times \mathcal{F}^{\perp}_t)_{t\in\mathbb{R_+}}$ on $\Omega\times\mathbb{R}_+$ again with $(\mathcal{F}_t)_{t\in\mathbb{R_+}}, (\mathcal{F}^{\perp}_t)_{t\in\mathbb{R_+}}$. The same notation is also used for the corresponding $\sigma-$algebras. Again, note that the filtrations $(\mathcal{F}_t\vee\mathcal{F}^{\perp}_t)_{t\in\mathbb{R}_+}, (\mathcal{F}_t\vee\mathcal{F}^{\perp}_T)_{t\in\mathbb{R}_+}$ also satisfy the usual conditions.

Now we are going to lift $W^\perp$ to a geometric rough path.  To 
do that we define for each $(s,t)\in \Delta$, the Stratonovich integral 
$$
    (\mathbb{W}^{\perp}_{s,t})^{\mathrm{Strato}}:=\int_s^t \delta W^\perp_{s,r}\circ \ud W^{\perp}_r. 
$$ 
It is well known (see \cite[Chapter 3]{friz2020course}) that for all $\omega''$ outside a $\mathbb{P}''$-null set $\mathcal{N}''$ of $\Omega''$, the {\sl Brownian rough path}
$$
    \textbf{W}^{\perp} (\omega''):=\big(W^\perp(\omega''),(\mathbb{W}^\perp)^{\mathrm{Strato}}(\omega'')\big) 
$$    
belongs to the space $\mathscr{C}^{0,\gamma}_g$ for all $\gamma \in (1/3,1/2)$ and that the following {\it lifting map} is $(\mathcal{F}^\perp_t)_{t\in\mathbb{R}_+}-$prog. measurable:
\begin{equation*}\label{ito-lift}
\begin{array}{llll}
\textbf{W}^{\perp}: & [0,T]\times \Omega'' & \rightarrow & (\mathscr{C}^{0,\gamma}_g, \mathcal{B}(\mathscr{C}^{0,\gamma}_g))	\\
 & (t,\omega'') &\mapsto & \textbf{W}^{\perp}(\omega'')_{.\wedge t}.
\end{array}
\end{equation*}
For $\omega''\in \mathcal{N}''$ we set $\textbf{W}^{\perp}(\omega''):=(0,0)$, and the same convention will hold for every new process defined below whenever we randomize with respect to $\textbf{W}^{\perp}$.
 
\begin{rmk}\label{rmk 6.1}
 When we write $\mathbb{E}, \mathbb{E}', \mathbb{E}''$ we mean that we take expectations with respect to $\mathbb{P}, \mathbb{P}', \mathbb{P}''$.
\end{rmk}

Next, we define the following spaces of controls and vector flows.
\begin{defn}\label{dfn 6.2}  Let $p, d\in\mathbb{N}, \gamma\in(\frac{1}{3},\frac{1}{2})$ and 
 $0<\beta'\leq \beta\leq\gamma$ such that $\gamma+\beta+\beta'>1$.
    \begin{align*}
    \mathcal{A}&:= \big\{\alpha:\Omega'\times[0,T]\rightarrow\mathbb{R}^\kappa:\text{prog. measurable with respect to}\hspace{.1cm}\nonumber\\&\hspace{2.3cm}(\mathcal{F}_t)_{t\in[0,T]},\hspace{.1cm}\text{and}\hspace{.1cm} \sup_{t\in[0,T]}\mathbb{E}'[|\alpha_t|^k]<\infty,\hspace{.1cm}\text{for every}\hspace{.1cm}k\in [2,\infty)\big\},\\
       \mathcal{A}^{1}&:= \big\{\alpha:\Omega'\times\mathscr{C}^{0,\gamma}_g\times[0,T]\rightarrow\mathbb{R}^\kappa:\text{prog. measurable with respect to}\hspace{.1cm}\nonumber\\&\hspace{2.3cm}\big(\mathcal{F}_t\otimes\{\emptyset, \mathscr{C}^{0,\gamma}_g\}\big)_{t\in[0,T]},\footnotemark\hspace{.1cm}\text{and}\hspace{.1cm} {\alpha}(\pmb{\eta})\in \mathcal{A}\hspace{.1cm}\text{for every}\hspace{.1cm}\pmb{\eta}\in \mathscr{C}^{0,\gamma}_g\big\},\\
        \mathcal{A}^{2}&:= \big\{\alpha:\Omega'\times\mathscr{C}^{0,\gamma}_g\times[0,T]\rightarrow\mathbb{R}^\kappa:\text{prog. measurable with respect to}\hspace{.1cm}\nonumber\\&\hspace{2.3cm}\big(\mathcal{F}_t\otimes\mathcal{B}\big(\mathscr{C}^{0,\gamma}_g\big)\big)_{t\in[0,T]},\hspace{.1cm}\text{and}\hspace{.1cm} {\alpha}(\pmb{\eta})\in \mathcal{A}\hspace{.1cm}\text{for every}\hspace{.1cm}\pmb{\eta}\in \mathscr{C}^{0,\gamma}_g\big\},\\ 
        \mathscr{M}^{\beta,\beta'}_\bullet([0,T];\mathbb{R}^d)&:=\big\{(m,m'): \big[m:\mathscr{C}^{0,\gamma}_g\rightarrow C^{\beta}\big([0,T];\mathbb{R}^d\big)\hspace{.1cm}\text{measurable with respect to}\hspace{.1cm} \mathcal{B}\big(\mathscr{C}^{0,\gamma}_g\big)\hspace{0.1cm}\nonumber\\&\hspace{1.3cm}\text{and}\hspace{.1cm}m':\mathscr{C}^{0,\gamma}_g\rightarrow C^{\beta'}\big([0,T];\mathbb{R}^{d\times p}\big)\hspace{.1cm}\text{measurable with respect to}\hspace{.1cm} \mathcal{B}\big(\mathscr{C}^{0,\gamma}_g\big),\hspace{0.1cm}\nonumber\\&\hspace{1.3cm}\text{such that}\hspace{.1cm}(m(\pmb{\eta}),m'(\pmb{\eta}))\in \mathscr{D}^{\beta,\beta'}_{\eta}\big([0,T];\mathbb{R}^d\big)\hspace{.1cm}\text{for every}\hspace{.1cm}\pmb{\eta}\in \mathscr{C}^{0,\gamma}_g\big]\big\}.
    \end{align*}
    \end{defn}
    \footnotetext{$\mathcal{F}_t\otimes\{\emptyset,\mathscr{C}^{0,\gamma}_g\}=\{S\times \mathscr{C}^{0,\gamma}_g:S\in\mathcal{F}_t\}$.}

\begin{rmk}\label{rmk 6.3}
One can also define the following space of $\pmb{\eta}-$causal controls
\begin{align*}
    \mathcal{A}^{\text{Causal}}&:= \big\{\alpha:\Omega'\times\mathscr{C}^{0,\gamma}_g\times[0,T]\rightarrow\mathbb{R}^\kappa:\text{prog. measurable with respect to}\hspace{.1cm}\nonumber\\&\hspace{0.7cm}\big(\mathcal{F}_t\otimes \mathcal{B}(\mathscr{C}^{0,\gamma}_g)\big)_{t\in[0,T]}, {\alpha}(\pmb{\eta})\in \mathcal{A}\hspace{.1cm}\text{for every}\hspace{.1cm}\pmb{\eta}\in \mathscr{C}^{0,\gamma}_g,\hspace{.1cm}\text{and}\hspace{.1cm}\alpha_t(\omega',\pmb{\eta})=\alpha_t(\omega',\pmb{\eta}_{\cdot\wedge t})\big\}.
\end{align*}
Note that $\mathcal{A}^1\subseteq\mathcal{A}^{\text{Causal}}\subseteq \mathcal{A}^2$, and the randomized versions of controls $\alpha\in\mathcal{A}^{\text{Causal}}$, $\overline{\alpha}$, are $(\mathcal{F}_t\vee\mathcal{F}^{\perp}_t)_{t\in\mathbb{R}_+}$ prog. measurable processes.
\end{rmk}

At this point we introduce the following assumption.

\begin{enumerate}[label=(S6-R), ref=S6-R]
\setcounter{enumi}{5}
    \item\label{(S6-R)} There exist $A^1, C^1:\mathscr{C}^{0,\gamma}_g\rightarrow C^\beta\big([0,T];L(\mathbb{R}^d,\mathbb{R}^{d\times p})\big)$ measurable with respect to $\mathcal{B}\big(\mathscr{C}^{0,\gamma}_g\big)$ and $(A^1)', (C^1)':\mathscr{C}^{0,\gamma}_g\rightarrow C^{\beta'}\big([0,T];L\big(\mathbb{R}^p;L(\mathbb{R}^d,\mathbb{R}^{d\times p})\big)\big)$ measurable with respect to $\mathcal{B}\big(\mathscr{C}^{0,\gamma}_g\big)$, such that $(A^1(\pmb{\eta}),(A^{1})'(\pmb{\eta})), (C^1(\pmb{\eta}),(C^1)'(\pmb{\eta}))\in \mathscr{D}^{\beta,\beta'}_{\eta}\big([0,T];L(\mathbb{R}^d,\mathbb{R}^{d\times p})\big)\hspace{.1cm}\text{for every}\hspace{.1cm}\pmb{\eta}\in \mathscr{C}^{0,\gamma}_g$
    \footnote{We assume that $(A^{1}(0), (A^{1})'(0))=(C^{1}(0), (C^{1})'(0))=(0,0)$.},
    and 
    \begin{equation*}
        \|(A^{1}, (A^{1})'); (A^{1}, (A^{1})') \|_{\eta_1, \eta_2,\beta,\beta'}+\|(C^{1}, (C^{1})'); (C^{1}, (C^{1})') \|_{\eta_1, \eta_2,\beta,\beta'}\lesssim  \rho_{\gamma}(\pmb{\eta}_2, \pmb{\eta}_1),
    \end{equation*}
    for every $\pmb{\eta}_1,\pmb{\eta}_2\in \mathscr{C}^{0,\gamma}_g$. Furthermore, $(C^1(\pmb{\eta}),(C^1)'(\pmb{\eta}))$ satisfies assumption (\ref{(S4-R)}) for every $\pmb{\eta}\in \mathscr{C}^{0,\gamma}_g$.
\end{enumerate}

\begin{rmk}
\begin{enumerate}
    \item[(i)] Following Remark \ref{rmk 4.12}, one can alternatively make the following assumption with respect to $(C^1,(C^1)')$.
\begin{enumerate}[label=(S7-R), ref=S7-R]
    \item\label{(S7-R)} There exist $\lambda^{(2)}:\mathscr{C}^{0,\gamma}_g\rightarrow C^\beta\big([0,T];\mathbb{R}^{1\times p}\big)$ measurable with respect to $\mathcal{B}\big(\mathscr{C}^{0,\gamma}_g\big)$ and $(\lambda^{(2)})':\mathscr{C}^{0,\gamma}_g\rightarrow C^{\beta'}\big([0,T];L(\mathbb{R}^p,\mathbb{R}^{1\times p})\big)$ measurable with respect to $\mathcal{B}\big(\mathscr{C}^{0,\gamma}_g\big)$, such that $(\lambda^{(2)}(\pmb{\eta}),$ $(\lambda^{(2)})'(\pmb{\eta}))\in \mathscr{D}^{\beta,\beta'}_{\eta}\big([0,T];\mathbb{R}^{1\times p}\big)\hspace{.1cm}\text{for every}\hspace{.1cm}\pmb{\eta}\in \mathscr{C}^{0,\gamma}_g$, and 
    \begin{equation*}
        \|(\lambda^{(2)}(\pmb{\eta}_1), (\lambda^{(2)})'(\pmb{\eta}_1)); (\lambda^{(2)}(\pmb{\eta}_2), (\lambda^{(2)})'(\pmb{\eta}_2)) \|_{\eta_1, \eta_2,\beta,\beta'}\lesssim  \rho_{\gamma}(\pmb{\eta}_2, \pmb{\eta}_1),
    \end{equation*}
    for every $\pmb{\eta}_1,\pmb{\eta}_2\in \mathscr{C}^{0,\gamma}_g$.
\end{enumerate} 

    \item[(ii)] From (\ref{(S7-R)}) we immediately get that $$(\lambda^{(2)}(\pmb{\eta})\text{Id}_{d\times d}, (\lambda^{(2)})'(\pmb{\eta})\text{Id}_{d\times d})\in \mathscr{D}^{\beta,\beta'}_{\eta}\big([0,T];L(\mathbb{R}^d,\mathbb{R}^{d\times p})\big)$$
    $\text{for every}\hspace{.1cm}\pmb{\eta}\in \mathscr{C}^{0,\gamma}_g$, and 
    \begin{equation*}
        \|(\lambda^{(2)}(\pmb{\eta}_1)\text{Id}_{d\times d}, (\lambda^{(2)})'(\pmb{\eta}_1)\text{Id}_{d\times d}); (\lambda^{(2)}(\pmb{\eta}_2)\text{Id}_{d\times d}, (\lambda^{(2)})'(\pmb{\eta}_2)\text{Id}_{d\times d}) \|_{\eta_1, \eta_2,\beta,\beta'}\lesssim  \rho_{\gamma}(\pmb{\eta}_2, \pmb{\eta}_1),
    \end{equation*}
    for every $\pmb{\eta}_1,\pmb{\eta}_2\in \mathscr{C}^{0,\gamma}_g$.

    \item[(iii)] An example when assumption (\ref{(S6-R)}) is satisfied is given in the case that $A^1, C^1$ take values in $ C^{\beta+\beta'}\big([0,T];L(\mathbb{R}^d,\mathbb{R}^{d\times p})\big)$ and are constant with respect to $\mathscr{C}^{0,\gamma}_g$. Then, $(A^{1},0),$   $ (C^{1},0)\in \mathscr{D}^{\beta,\beta'}_{\eta}\big([0,T];L(\mathbb{R}^d,\mathbb{R}^{d\times p})\big)$ for every $\pmb{\eta}\in\mathscr{C}^{0,\gamma}_g([0,T];\R^p)$. For more examples one can apply the results of \cite[Chapter 7]{friz2020course}.
\end{enumerate}
\end{rmk}

The next result shows that the per $\pmb{\eta}$ rough LQ MFG solutions we get from the analysis of the previous sections, with the help of the stability estimates from Section \ref{sec:Stability}, can be nicely packed together in terms of measurability by using the spaces of Definition \ref{dfn 6.2}. 

\begin{prop}\label{thm 6.4}
Let $p,d\in\mathbb{N}, \xi\in \mathbb{R}^d, \gamma\in(\frac{1}{3},\frac{1}{2})$ and 
 $0<\beta'\leq \beta\leq\gamma$ such that $\gamma+\beta+\beta'>1$. Additionally, let (\ref{S2}), (\ref{S3}), (\ref{(S6-R)}) hold true, $Q_t+\bar Q_t-\bar Q_t S_t\succeq 0$ and $Q+\bar Q-\bar Q S\succeq 0$. Then, there exist unique $(m^*,(m^*)')\in   \mathscr{M}^{\beta,\beta'}_\bullet([0,T];\mathbb{R}^d)$ and $\alpha^*\in \mathcal{A}^2$ such that, for every $\pmb{\eta}\in\mathscr{C}^{0,\gamma}_g$, we have
 \begin{equation*}
    \begin{aligned}
        \E'\!\left[\int_0^T \ell^{m^*(\pmb{\eta})}_t\big({X}^{*,\pmb{\eta}}_t,\alpha^*(\pmb{\eta})_t\big)\ud t + \Phi^{m^*(\pmb{\eta})}\big({X}^{*,\pmb{\eta}}_T\big)\right] = \min_{\alpha\in\mathcal{A}}\E'\!\left[\int_0^T \ell^{m^*(\pmb{\eta})}_t\big({X}^{\pmb{\eta},\alpha}_t,\alpha_t\big)\ud t + \Phi^{m^*(\pmb{\eta})}\big({X}^{\pmb{\eta},\alpha}_T\big)\right]
    \end{aligned}
\end{equation*}
while
\begin{equation*}
    \begin{aligned}
        m^*(\pmb{\eta})_t = \mathbb{E}'\big[ X^{*,\pmb{\eta}}_t\big],\hspace{.2cm}(m^*)'(\pmb{\eta})_t= (A^1(\pmb{\eta})_t+C^1(\pmb{\eta})_t)\mathbb{E}'\big[ X^{*,\pmb{\eta}}_t\big].
    \end{aligned}
\end{equation*}
\end{prop}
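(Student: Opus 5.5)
The plan is to build the objects pointwise in $\pmb{\eta}$ and then recover measurability from the stability results of Section \ref{sec:Stability}.

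\textbf{Pointwise construction.} Fix $\pmb{\eta}\in\mathscr{C}^{0,\gamma}_g$. By (\ref{(S6-R)}), the coefficients $(A^1(\pmb{\eta}),(A^1)'(\pmb{\eta}))$ and $(C^1(\pmb{\eta}),(C^1)'(\pmb{\eta}))$ satisfy (\ref{S1-R}) and (\ref{(S4-R)}). Together with (\ref{S2}), (\ref{S3}), Theorem \ref{thm:FBRDEwellposedness} then gives a unique solution of \eqref{eq:FBRDE} with deterministic initial value $\xi$. We set $(m^*(\pmb{\eta}),(m^*)'(\pmb{\eta})):=(\bar x,(A^1+C^1)\bar x)$, which lies in $\mathscr{D}^{\gamma,\beta}_\eta\subset\mathscr{D}^{\beta,\beta'}_\eta$. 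By Theorem \ref{thm:RMFGfixedPointEquivalence}, this pair is the unique fixed point of $\widetilde F$, so the consistency condition \eqref{eq:rRconsistencyCondition} holds with $\mathbb{E}'$.

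Theorems \ref{thm Stoch_optimal_control_Volterra}, \ref{thm 3.9}, \ref{thm equivalence} and \ref{thm sol_rough_FBSDE}, applied on $\Omega'$, yield the unique minimizer $\alpha^*(\pmb{\eta})=-R^{-1}B^\top Y^{\pmb{\eta}}$ over $\mathcal{A}$; it lies in $\mathcal{A}$ by Remark \ref{rmk 3.10}. The minimum is attained and is strict by Lemma \ref{lemma 03.6}. Uniqueness of $(m^*,(m^*)')$ and $\alpha^*$ as functions of $\pmb{\eta}$ follows from this pointwise uniqueness: any other candidate in $\mathscr{M}_\bullet^{\beta,\beta'}\times\mathcal{A}^2$ must agree for every $\pmb{\eta}$ (for $\alpha^*$, up to $\mathbb{P}'\otimes\ud t$-null sets).

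\textbf{Measurability of $m^*$.} Lemma \ref{lem:mStability}, combined with the Lipschitz bound in (\ref{(S6-R)}), gives local Lipschitz continuity
\[
\pmb{\eta}\mapsto (m^*(\pmb{\eta}),(m^*)'(\pmb{\eta}))\in C^\beta\times C^{\beta'}
\]
in $\rho_\gamma$, with constants depending on a bound $M$ for the norms. Continuity implies Borel measurability, so $(m^*,(m^*)')\in\mathscr{M}^{\beta,\beta'}_\bullet$.

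\textbf{Measurability of $\alpha^*$.} This is the main obstacle: we need a version of $(\omega',\pmb{\eta},t)\mapsto\alpha^*(\pmb{\eta})_t(\omega')$ that is jointly progressively measurable with respect to $(\mathcal{F}_t\otimes\mathcal{B}(\mathscr{C}^{0,\gamma}_g))_t$. Pointwise-in-$\pmb{\eta}$ a.s. objects do not automatically glue together. I would exploit the explicit feedback structure from \eqref{eq:ansatzRFBSDE}:
\[
\alpha^*(\pmb{\eta})_t=-R_t^{-1}B_t^\top\Big[\mathcal{K}^{\pmb{\eta}}(0,t)^\top P^{\pmb{\eta}}_t\mathcal{K}^{\pmb{\eta}}(0,t)\big(X^{*,\pmb{\eta}}_t-M^{\pmb{\eta}}(m^*)_t\big)+\mathcal{K}^{\pmb{\eta}}(0,t)^\top\Pi^{\pmb{\eta}}_t\Big].
\]
Here $\mathcal{K}^{\pmb{\eta}}$, $M^{\pmb{\eta}}$, $P^{\pmb{\eta}}$ and $\Pi^{\pmb{\eta}}$ are deterministic and continuous in $(t,\pmb{\eta})$: this follows from the RDE/ODE stability used in the proof of Theorem \ref{thm:StabilityMain} and continuous dependence of the Riccati system \eqref{eq symmetric_Riccati} on its coefficients. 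Hence they are jointly Borel.

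For the state, I would use the transformed equation of Remark \ref{rmk 3.2} under the feedback. It has the form
\[
\widehat X_t=\xi+\int_0^t(a^{\pmb{\eta}}_s\widehat X_s+b^{\pmb{\eta}}_s)\,\ud s+\int_0^t\mathcal{K}^{\pmb{\eta}}(0,s)\Sigma_s\,\ud W_s,
\]
with continuous deterministic coefficients depending continuously on $\pmb{\eta}$. Variation of constants gives an explicit solution
\[
\widehat X_t=\Phi^{\pmb{\eta}}_t\Big(\xi+\int_0^t(\Phi^{\pmb{\eta}}_s)^{-1}b^{\pmb{\eta}}_s\,\ud s+\int_0^t(\Phi^{\pmb{\eta}}_s)^{-1}\mathcal{K}^{\pmb{\eta}}(0,s)\Sigma_s\,\ud W_s\Big).
\]
The stochastic integral with a parameter-dependent deterministic integrand admits a jointly measurable version: either by integrating by parts to remove the Itô integral when the integrand is of finite variation, or by the standard parametrized stochastic integral lemma, using continuity in $\pmb{\eta}$ from the estimate \eqref{eq:StabilityT5bnd} together with Kolmogorov/approximation on a countable dense subset of the Polish space $\mathscr{C}^{0,\gamma}_g$.

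Composing these pieces gives $\alpha^*\in\mathcal{A}^2$, which completes the argument.
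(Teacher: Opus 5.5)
Your proposal is correct, and up to the measurability of the optimal state it matches the paper's proof. The shared steps are pointwise well-posedness from Theorems \ref{thm:FBRDEwellposedness} and \ref{thm:RMFGfixedPointEquivalence}, and continuity of $\pmb{\eta}\mapsto(m^*(\pmb{\eta}),(m^*)'(\pmb{\eta}))$ from Lemma \ref{lem:mStability} together with (\ref{(S6-R)}). The paper also uses, as you do, continuity in $\pmb{\eta}$ of the deterministic feedback coefficients $\mathcal{K}^{\pmb{\eta}},M^{\pmb{\eta}},P^{\pmb{\eta}},\Pi^{\pmb{\eta}}$; this is its appeal to Carath\'eodory's lemma.

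The genuine difference is how you obtain a jointly progressively measurable version of $(\omega',\pmb{\eta},t)\mapsto X^{*,\pmb{\eta}}_t(\omega')$. The paper imports measurable dependence of RSDE solutions on the rough driver from \cite[Proposition 3.10]{horst2025pontryagin}. You instead pass to the transformed linear SDE of Remark \ref{rmk 3.2} in closed loop, solve it by variation of constants, and need only a parametrized It\^o-integral lemma.

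The paper's route is shorter, and the same RSDE result is what Section \ref{sec:Randomization} uses for arbitrary $\alpha\in\mathcal{A}^2$. However, it leaves implicit how that result applies to the closed-loop optimal state, where the control fed into the RSDE is itself defined through the state. Your explicit closed-loop representation removes this issue and stays within classical It\^o calculus, in the spirit of the mild formulation.

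Two details in your last step should be fixed.
\begin{itemize}
\item The integration-by-parts option is not available: for a genuinely rough $\pmb{\eta}$, $\mathcal{K}^{\pmb{\eta}}(0,\cdot)$ is only $\gamma$-H\"older, not of bounded variation.
\item $\Sigma$ is a general progressive process, not deterministic.
\end{itemize}
Neither matters if you rely on the Stricker--Yor lemma. Since $(s,\pmb{\eta})\mapsto(\Phi^{\pmb{\eta}}_s)^{-1}\mathcal{K}^{\pmb{\eta}}(0,s)$ is jointly continuous, the integrand is $\mathcal{B}(\mathscr{C}^{0,\gamma}_g)\otimes$predictable measurable once you take a predictable version of $\Sigma$. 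The lemma then gives a version of the parametrized integral that is indistinguishable, for every $\pmb{\eta}$, from the It\^o integral. That version is built as an a.s.\ limit of elementary integrals, so it is $\mathcal{F}_t\otimes\mathcal{B}(\mathscr{C}^{0,\gamma}_g)$-measurable at each $t$ and continuous in $t$, hence jointly progressive. No continuity in $\pmb{\eta}$ is needed, and no Kolmogorov argument, which would not apply directly over this infinite-dimensional parameter space.
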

\begin{proof}
Let $\xi\in \mathbb{R}^d$, $\pmb{\eta}\in \mathscr{C}^{0,\gamma}_g([0,T];\R^p)$, $\alpha\in \mathcal{A}^{2}$ and $(m,m')\in \mathscr{M}^{\beta,\beta'}_\bullet([0,T];\mathbb{R}^d)$. Then, under \ref{(S6-R)}, from Theorem \ref{thm sol_rough_affine} and \cite[Proposition 3.10]{horst2025pontryagin}, we get that there exists a unique prog. measurable with respect to $\big(\mathcal{F}_t\otimes\mathcal{B}\big(\mathscr{C}^{0,\gamma}_g\big)\big)_{t\in[0,T]}$ process $X^{\pmb{\eta}}_t(\omega'):=X^{\pmb{\eta},\alpha(\pmb{\eta}),(m(\pmb{\eta}),m'(\pmb{\eta})),\xi}_t(\omega')$ which solves 
\begin{equation*}
\begin{aligned}
    X^{\pmb{\eta}}_t=\xi+\int_{0}^t\big(A_s X^{\pmb{\eta}}_s + B_s \alpha(\pmb{\eta})_s + C_s m(\pmb{\eta})_s \big)\ud s +\int_0^t \Sigma_s\ud W_s+\int_{0}^t \big(A^1_s X^{\pmb{\eta}}_s + C^1_s m(\pmb{\eta})_s\big)\ud\pmb{\eta}_s,
\end{aligned}
\end{equation*}
with $(X^{\pmb{\eta}},A^1X^{\pmb{\eta}}+C^1m(\pmb{\eta}))\in \bigcap_{k =2}^\infty\textbf{D}^{\gamma,\beta}_{\eta}\mathbb{L}^k\big([0,T];\Omega';\mathbb{R}^d\big)$.

Next, for every $\pmb{\eta}\in \mathscr{C}^{0,\gamma}_g$, under (\ref{(S6-R)}), there exist unique $(m^*(\pmb{\eta}),(m^*)'(\pmb{\eta}))\in \mathscr{D}^{\beta,\beta'}_{\eta}\big([0,T];\mathbb{R}^d\big)$ and $\alpha^*(\pmb{\eta})\in\mathcal{A}$
such that 
\begin{equation*}
    \begin{aligned}
        \E'\!\left[\int_0^T \ell^{m^*(\pmb{\eta})}_t\big({X}^{*,\pmb{\eta}}_t,\alpha^*(\pmb{\eta})_t\big)\ud t + \Phi^{m^*(\pmb{\eta})}\big({X}^{*,\pmb{\eta}}_T\big)\right] = \textrm{min}_{\alpha\in\mathcal{A}}\E'\!\left[\int_0^T \ell^{m^*(\pmb{\eta})}_t\big({X}^{\pmb{\eta},\alpha}_t,\alpha_t\big)\ud t + \Phi^{m^*(\pmb{\eta})}\big({X}^{\pmb{\eta},\alpha}_T\big)\right]
    \end{aligned}
\end{equation*}
where
\begin{align*}
    X^{*,\pmb{\eta}}_t(\omega'):=X^{\pmb{\eta},\alpha^*(\pmb{\eta}),(m^*(\pmb{\eta}),(m^*)'(\pmb{\eta})),\xi}_t(\omega'),\hspace{.2cm}
 X^{\pmb{\eta},\alpha}_t(\omega'):=X^{\pmb{\eta},\alpha,(m^*(\pmb{\eta}),(m^*)'(\pmb{\eta})),\xi}_t(\omega'),
\end{align*}
while
\begin{equation*}
    \begin{aligned}
        m^*(\pmb{\eta})_t = \mathbb{E}'\big[ X^{*,\pmb{\eta}}_t\big],\hspace{.2cm}(m^*)'(\pmb{\eta})_t= (A^1(\pmb{\eta})_t+C^1(\pmb{\eta})_t)\mathbb{E}'\big[ X^{*,\pmb{\eta}}_t\big].
    \end{aligned}
\end{equation*}
From Lemma \ref{lem:mStability}, we have that $m^*(\pmb{\eta}),(m^*)'(\pmb{\eta})$ depend continuously on $\pmb{\eta}$, thus the decoupling coefficients $ \mathcal{K}^{\pmb{\eta}}(\cdot,\cdot), M^{\pmb{\eta}}\big((m^*(\pmb{\eta}),(m^*)'(\pmb{\eta}))\big)_\cdot,P^{\pmb{\eta}}_\cdot, \Pi^{\pmb{\eta},(m^*({\pmb{\eta}}),(m^*)'(\pmb{\eta}))}_\cdot$ also depend continuously on $\pmb{\eta}$, and are deterministic. Moreover, we remind that $\mathscr{C}^{0,\gamma}_g$ is Polish. Hence Carath\'{e}odory's Lemma can be applied, see for instance \cite[4.51 Lemma]{aliprantis2006infinite}, with respect to the coefficients. So, by combining all the above, $(m^*,(m^*)')\in \mathscr{M}^{\beta,\beta'}_\bullet([0,T];\mathbb{R}^d)$, $X^{*,\pmb{\eta}}$ is prog. measurable with respect to $\big(\mathcal{F}_t\otimes\mathcal{B}\big(\mathscr{C}^{0,\gamma}_g\big)\big)_{t\in[0,T]}$ and $\alpha^*\in\mathcal{A}^2$, thus we have proved the following result.
\end{proof}

Moving on, for $(m,m')\in\mathscr{M}^{\beta,\beta'}_\bullet([0,T];\mathbb{R}^d), \alpha\in\mathcal{A}^2$, we now define the following randomized versions as
\begin{equation*}
    \begin{aligned}
   \hspace{3.5cm} \overline{\alpha}(\omega)&:= \alpha(\pmb{\eta})|_{\pmb{\eta}=\textbf{W}^{\perp} (\omega'')},\\
    \overline{m}(\omega)&:= m(\pmb{\eta})|_{\pmb{\eta}=\textbf{W}^{\perp} (\omega'')},
    \\
    \overline{m}'(\omega)&:= m'(\pmb{\eta})|_{\pmb{\eta}=\textbf{W}^{\perp} (\omega'')},
    \\
      \overline{X}^{\overline{\alpha}}_t(\omega) &:=  X^{\pmb{\eta},\alpha(\pmb{\eta}),(m(\pmb{\eta}),m'(\pmb{\eta})),\xi}_t(\omega')\big|_{\pmb{\eta}=\textbf{W}^{\perp} (\omega'')}.
    \end{aligned}
\end{equation*}
From the above\footnote{See also \cite[Theorem 3.6]{FLZ25}.} we get that $\overline{\alpha}\in\overline{\mathcal{A}},\hspace{.1cm} \overline{m}\hspace{.1cm}\text{is}\hspace{.1cm}\mathcal{F}^{\perp}_T-\text{measurable},\hspace{.1cm}
\overline{m}'\hspace{.1cm}\text{is}\hspace{.1cm}\mathcal{F}^{\perp}_T-\text{measurable}\hspace{.1cm},(\overline{m},\overline{m}')\in\overline{\mathscr{M}}^{\beta,\beta'}_\bullet([0,T];\mathbb{R}^d)\hspace{.1cm}\text{and}\hspace{.1cm}
        \overline{X}\hspace{.1cm}\text{is}\hspace{.1cm}(\mathcal{F}_t\vee\mathcal{F}^{\perp}_T)_{t\in[0,T]}-\text{prog. measurable},$
where
\begin{equation*}
\begin{aligned}
   \overline{\mathcal{A}}&:= \big\{\overline{\alpha}:\Omega'\times\Omega''\times[0,T]\rightarrow\mathbb{R}^\kappa: \text{there exists}\hspace{.1cm}{\alpha}\in \mathcal{A}^{2}\\&\hspace{2cm}\text{such that}\hspace{.1cm} \overline{\alpha}_t(\omega',\omega'')={\alpha}_t(\omega',\textbf{W}^{\perp} (\omega''))\big\},\\
   \overline{\mathscr{M}}^{\beta,\beta'}_\bullet([0,T];\mathbb{R}^d)&:=\big\{(\overline{m},\overline{m}'): \text{there exists}\hspace{.1cm}(m,m')\in \mathscr{M}^{\beta,\beta'}_\bullet([0,T];\mathbb{R}^d)\\&\hspace{2cm}\text{such that}\hspace{.1cm} (\overline{m}(\omega''),\overline{m}'(\omega''))= \big(m\big(\textbf{W}^{\perp}(\omega'')\big),m'\big(\textbf{W}^{\perp} (\omega'')\big)\big)\big\}.
    \end{aligned}
\end{equation*}
Moreover, by \cite[Theorem 3.6]{FLZ25}, we get that for every $t\in[0,T]$ it holds
\begin{equation*}
    \mathcal{L}(\overline{X}^{\overline{\alpha}}_t)(\omega)= \mathcal{L}(\overline{X}^{\overline{\alpha}}_t|\mathcal{F}^\perp_T)(\omega'')=\mathcal{L}(X^{\pmb{\eta}}_t)\big|_{\pmb{\eta}=\textbf{W}^{\perp} (\omega'')}.
\end{equation*}

Next, for $\pmb{\eta}\in\mathscr{C}^{0,\gamma}_g, \alpha\in \mathcal{A}^2$ and $(m,m')\in \mathscr{M}^{\beta,\beta'}_\bullet([0,T];\mathbb{R}^d)$, we have
\begin{equation*}
\begin{aligned}\ell^{m(\pmb{\eta})}_t\big({X}^{\pmb{\eta}}_t,\alpha(\pmb{\eta})_t\big)\geq 0,& \hspace{.2cm} \Phi^{m(\pmb{\eta})}\big({X}^{\pmb{\eta}}_T\big)\geq0\\
\ell^{\overline{m}}_t\big(\overline{X}^{\overline{\alpha}}_t,\overline{\alpha}_t\big)\geq 0,&\hspace{.2cm} \Phi^{\overline{m}}\big(\overline{X}^{\overline{\alpha}}_T\big)\geq 0
\end{aligned}
\end{equation*}
so, by \cite[Proposition 7.2]{FLZ24} or \cite[Proposition 5.2]{horst2025pontryagin}, we get for every $i\in\mathbb{N}$
\begin{align*}
     &\E'\!\left[\int_0^T \Big(i\wedge\ell^{m(\pmb{\eta})}_t\Big)\big({X}^{\pmb{\eta}}_t,\alpha(\pmb{\eta})_t\big)\ud t + \Big(i\wedge\Phi^{m(\pmb{\eta})}\Big)\big({X}^{\pmb{\eta}}_T\big)\right]\Bigg|_{\pmb{\eta}=\textbf{W}^{\perp} (\omega'')}\\
     =&\E\!\left[\int_0^T \Big(i\wedge\ell^{\overline{m}}_t\Big)\big(\overline{X}^{\overline{\alpha}}_t,\overline{\alpha}_t\big)\ud t + \Big(i\wedge\Phi^{\overline{m}}\Big)\big(\overline{X}^{\overline{\alpha}}_T\big)\bigg|\mathcal{F}^{\perp}_T\right].
\end{align*}
Hence, by the monotone convergence theorem we get 
\begin{equation}\label{eq 72}
\begin{aligned}
     &\E'\!\left[\int_0^T \ell^{m(\pmb{\eta})}_t\big({X}^{\pmb{\eta}}_t,\alpha(\pmb{\eta})_t\big)\ud t + \Phi^{m(\pmb{\eta})}\big({X}^{\pmb{\eta}}_T\big)\right]\Bigg|_{\pmb{\eta}=\textbf{W}^{\perp} (\omega'')}\\
     =&\E\!\left[\int_0^T \ell^{\overline{m}}_t\big(\overline{X}^{\overline{\alpha}}_t,\overline{\alpha}_t\big)\ud t + \Phi^{\overline{m}}\big(\overline{X}^{\overline{\alpha}}_T\big)\bigg|\mathcal{F}^{\perp}_T\right].
     \end{aligned}
\end{equation}
We note that with a trivial lift $\mathcal{A}$ can be seen as $\mathcal{A}^1$. So, in case that ${a}\in\mathcal{A}$, equation \eqref{eq 72} is written as
\begin{equation}\label{eq 73}
\begin{aligned}
     &\E'\!\left[\int_0^T \ell^{m(\pmb{\eta})}_t\big({X}^{\pmb{\eta}}_t,\alpha_t\big)\ud t + \Phi^{m(\pmb{\eta})}\big({X}^{\pmb{\eta}}_T\big)\right]\Bigg|_{\pmb{\eta}=\textbf{W}^{\perp} (\omega'')}\\
     =&\E\!\left[\int_0^T \ell^{\overline{m}}_t\big(\overline{X}^{{\alpha}}_t,{\alpha}_t\big)\ud t + \Phi^{\overline{m}}\big(\overline{X}_T^{{\alpha}}\big)\bigg|\mathcal{F}^{\perp}_T\right].
     \end{aligned}
\end{equation}
For every $(m,m')\in \mathscr{M}^{\beta,\beta'}_\bullet([0,T];\mathbb{R}^d)$ we define the following value functions
\begin{align*}
\mathcal{V}^{(m,m'),1}(\omega)&:= \mathrm{essinf}_{\alpha\in \mathcal{A}^1}\E\!\left[\int_0^T \ell^{\overline{m}}_t\big(\overline{X}^{\overline{\alpha}}_t,\overline{\alpha}_t\big)\ud t + \Phi^{\overline{m}}\big(\overline{X}^{\overline{\alpha}}_T\big)\bigg|\mathcal{F}^{\perp}_T\right],\\
    \mathcal{V}^{(m,m'),2}(\omega)&:= \mathrm{essinf}_{\alpha\in \mathcal{A}^2}\E\!\left[\int_0^T \ell^{\overline{m}}_t\big(\overline{X}^{\overline{\alpha}}_t,\overline{\alpha}_t\big)\ud t + \Phi^{\overline{m}}\big(\overline{X}^{\overline{\alpha}}_T\big)\bigg|\mathcal{F}^{\perp}_T\right],\\
    \overline{\mathcal{V}}^{(m,m')}(\omega)&:= \left(\textrm{inf}_{\alpha\in\mathcal{A}}\E'\!\left[\int_0^T \ell^{m(\pmb{\eta})}_t\big({X}^{\pmb{\eta}}_t,\alpha_t\big)\ud t + \Phi^{m(\pmb{\eta})}\big({X}^{\pmb{\eta}}_T\big)\right]\right)\Bigg|_{\pmb{\eta}=\textbf{W}^{\perp} (\omega'')}.
\end{align*}
Now we are ready to state our final result. 
\begin{thm}\label{thm 6.7}
Let $p, d\in\mathbb{N}, \xi\in \mathbb{R}^d, \gamma\in(\frac{1}{3},\frac{1}{2})$ and 
 $0<\beta'\leq \beta\leq\gamma$ such that $\gamma+\beta+\beta'>1$. Additionally, let (\ref{S2}), (\ref{S3}), (\ref{(S6-R)}) hold true, $Q_t+\bar Q_t-\bar Q_t S_t\succeq 0$ and $Q+\bar Q-\bar Q S\succeq 0$. Then, there exist $(\overline{m^*},(\overline{m^*})')\in \overline{\mathscr{M}}^{\beta,\beta'}_\bullet([0,T];\mathbb{R}^d)$ and $\overline{\alpha^*}\in  \overline{\mathcal{A}}$ such that 
 \begin{equation*}
      \overline{\mathcal{V}}^{(m^*,(m^*)')}(\omega)=\mathcal{V}^{(m^*,(m^*)'),2}(\omega)=\mathcal{V}^{(m^*,(m^*)'),1}(\omega)= \E\!\left[\int_0^T \ell^{\overline{m^*}}_t\big(\overline{X}^{\overline{\alpha^*}}_t,\overline{\alpha^*}_t\big)\ud t + \Phi^{\overline{m^*}}\big(\overline{X}^{\overline{\alpha^*}}_T\big)\bigg|\mathcal{F}^{\perp}_T\right]
 \end{equation*}
 while 
 \begin{equation*}
 \overline{m^*}_t = \mathbb{E}\Big[ \overline{X}^{\overline{\alpha^*}}_t\Big|\mathcal{F}^{\perp}_T\Big], \quad (\overline{m^*})'_t = ( A_t^1  + C_t^1)|_{\pmb{\eta}=\textbf{W}^{\perp} (\omega'')}  \mathbb{E}\Big[ \overline{X}^{\overline{\alpha^*}}_t\Big|\mathcal{F}^{\perp}_T\Big].
\end{equation*}
\end{thm}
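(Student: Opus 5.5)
The plan is to take the pathwise equilibrium from Proposition \ref{thm 6.4}, randomize it, and then compare the three value functions by means of the disintegration identities \eqref{eq 72}--\eqref{eq 73}. Proposition \ref{thm 6.4} gives $(m^*,(m^*)')\in\mathscr{M}^{\beta,\beta'}_\bullet$ and $\alpha^*\in\mathcal{A}^2$ such that, for every $\pmb{\eta}$, the control $\alpha^*(\pmb{\eta})$ minimizes the $\pmb{\eta}$-cost over $\mathcal{A}$ and the consistency relations hold with $\mathbb{E}'$. We set $\overline{m^*}:=m^*(\textbf{W}^\perp)$, $(\overline{m^*})':=(m^*)'(\textbf{W}^\perp)$ and $\overline{\alpha^*}:=\alpha^*(\textbf{W}^\perp)$. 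These belong to $\overline{\mathscr{M}}^{\beta,\beta'}_\bullet$ and $\overline{\mathcal{A}}$ by definition.

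The consistency condition is obtained as follows. By \cite[Theorem 3.6]{FLZ25}, as recalled above, $\mathcal{L}(\overline{X}^{\overline{\alpha^*}}_t\mid\mathcal{F}^\perp_T)=\mathcal{L}(X^{*,\pmb{\eta}}_t)|_{\pmb{\eta}=\textbf{W}^\perp}$. Taking first moments, which are finite by Remark \ref{rmk 3.10}, gives
\[
\mathbb{E}[\overline{X}^{\overline{\alpha^*}}_t\mid\mathcal{F}^\perp_T]=\mathbb{E}'[X^{*,\pmb{\eta}}_t]|_{\pmb{\eta}=\textbf{W}^\perp}=\overline{m^*}_t .
\]
The Gubinelli derivative identity then follows by composing $(m^*)'(\pmb{\eta})=(A^1(\pmb{\eta})+C^1(\pmb{\eta}))\mathbb{E}'[X^{*,\pmb{\eta}}_t]$ with $\textbf{W}^\perp$.

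For the values, apply \eqref{eq 72} with $\alpha=\alpha^*$. Its left side equals $\min_{\alpha\in\mathcal{A}}(\cdots)|_{\pmb{\eta}=\textbf{W}^\perp}=\overline{\mathcal{V}}^{(m^*,(m^*)')}$, and its right side is the conditional cost of $\overline{\alpha^*}$. Since $\alpha^*\in\mathcal{A}^2$, this yields $\mathcal{V}^{2}\le\overline{\mathcal{V}}$. For the reverse inequality, take any $\alpha\in\mathcal{A}^2$. For each fixed $\pmb{\eta}$ we have $\alpha(\pmb{\eta})\in\mathcal{A}$, so the pathwise cost satisfies $J(\alpha(\pmb{\eta});\pmb{\eta})\ge\min_{\mathcal{A}}J(\cdot;\pmb{\eta})$. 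Composing with $\textbf{W}^\perp$ and using \eqref{eq 72} gives that the conditional cost of $\overline{\alpha}$ is at least $\overline{\mathcal{V}}$ a.s., and hence $\mathcal{V}^2\ge\overline{\mathcal{V}}$. For $\mathcal{A}^1$ we use $\mathcal{A}^1\subseteq\mathcal{A}^2$, which gives $\mathcal{V}^1\ge\mathcal{V}^2$. In the other direction, any $\alpha\in\mathcal{A}$ is trivially an element of $\mathcal{A}^1$, and \eqref{eq 73} shows that its conditional cost equals the pathwise cost at $\textbf{W}^\perp$. Thus $\mathcal{V}^1\le\inf_{\alpha\in\mathcal{A}}(\cdots)|_{\textbf{W}^\perp}=\overline{\mathcal{V}}$. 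Here the essential infimum over countable families has to be handled carefully: the inequality for each individual $\alpha$ gives $\mathcal{V}^1\le J(\alpha)|_{\textbf{W}^\perp}$ a.s., and the infimum over $\alpha$ in $\overline{\mathcal{V}}$ is the pointwise minimum, attained by $\alpha^*(\pmb{\eta})$.

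I expect the main obstacle to be exactly this last step. The pathwise minimizer $\alpha^*(\pmb{\eta})$ depends on $\pmb{\eta}$, so it lies in $\mathcal{A}^2$ but not in $\mathcal{A}^1$, and the bound $\mathcal{V}^1\le\overline{\mathcal{V}}$ requires an $\omega''$-dependent approximation by $\eta$-independent controls. I would handle it as follows. Take a countable dense set $\{\pmb{\eta}_j\}$ of the Polish space $\mathscr{C}^{0,\gamma}_g$ and the constant-in-$\pmb{\eta}$ controls $\alpha_j:=\alpha^*(\pmb{\eta}_j)\in\mathcal{A}\subset\mathcal{A}^1$. By \eqref{eq 73}, $\mathcal{V}^1\le \inf_j J(\alpha_j;\pmb{\eta})|_{\textbf{W}^\perp}$ a.s., since the essential infimum lies below each member and countably many null sets can be discarded. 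Continuity of $\pmb{\eta}\mapsto J(\alpha_j;\pmb{\eta})$ and of the optimal value, which follows from Theorem \ref{thm:StabilityMain} and (\ref{(S6-R)}), uniformly for the $\alpha_j$ on bounded sets, then gives $\inf_j J(\alpha_j;\pmb{\eta})=\min_{\mathcal{A}} J(\cdot;\pmb{\eta})$. This closes the chain of equalities.
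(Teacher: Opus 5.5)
Your argument is correct. Its skeleton is the paper's: the pathwise equilibrium of Proposition~\ref{thm 6.4}, randomization at $\textbf{W}^\perp$, the identities \eqref{eq 72}--\eqref{eq 73}, and a sandwich between $\overline{\mathcal V}$, $\mathcal V^{2}$ and $\mathcal V^{1}$. You handle the two inequalities differently, though.

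\textbf{Lower bound.} The paper takes $\overline{\mathcal V}\le\mathcal V^{2}\le\mathcal V^{1}$ from the first part of the proof of \cite[Theorem 7.4]{FLZ24}. You derive it directly from \eqref{eq 72} and pathwise minimality. Passing from the a.s.\ bound for each $\overline\alpha$ to $\mathcal V^2\ge\overline{\mathcal V}$ uses that $\overline{\mathcal V}$ is $\mathcal F^\perp_T$-measurable. You have this, since you identified $\overline{\mathcal V}$ with the conditional cost of $\overline{\alpha^*}$; it is worth saying explicitly.

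\textbf{Upper bound.} The paper simply asserts, from \eqref{eq 72}, \eqref{eq 73} and that measurability, that $\overline{\mathcal V}^{(m^*,(m^*)')}=\mathrm{essinf}_{\alpha\in\mathcal A}\E[\cdots\mid\mathcal F^\perp_T]$. This is $\mathcal V^{1}$, because $\mathcal A^1$ is just $\mathcal A$. Measurability alone only gives $\overline{\mathcal V}\le$ this essential infimum. The reverse inequality is exactly the difficulty you single out: the pathwise minimizer depends on $\pmb\eta$, and the $\pmb\eta$-independent family $\mathcal A$ is uncountable. Your countable-dense-set argument supplies it, at the cost of invoking the stability results of Section~\ref{sec:Stability}. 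So the paper's version is shorter, and yours makes the decisive step rigorous. You also spell out the consistency relation, which the paper's proof leaves implicit in Proposition~\ref{thm 6.4} and the conditional-law identity from \cite{FLZ25}.

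\textbf{A simpler route for the last step.} You invoke continuity of $\pmb\eta\mapsto J(\alpha_j;\pmb\eta)$ uniformly in $j$, which Theorem~\ref{thm:StabilityMain} does not state for non-optimal controls. You can avoid it. Set $h=\alpha^*(\pmb\eta_j)-\alpha^*(\pmb\eta)$ and use the exact expansion from the proof of Lemma~\ref{lemma 03.6}:
$J(\alpha^*(\pmb\eta)+h;\pmb\eta)-J(\alpha^*(\pmb\eta);\pmb\eta)=R^{m^*(\pmb\eta)}_2((\delta X)^h,h)\lesssim\|h\|^2_{\mathbb H^2}$.
Combine it with the control estimate of Theorem~\ref{thm:StabilityMain} under (\ref{(S6-R)}):
$\sup_t\|\alpha^*(\pmb\eta_j)_t-\alpha^*(\pmb\eta)_t\|_2\lesssim\rho_\gamma(\pmb\eta_j,\pmb\eta)$.
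This gives $J(\alpha_j;\pmb\eta)\to\min_{\mathcal A}J(\cdot;\pmb\eta)$ along any $\pmb\eta_j\to\pmb\eta$ directly.
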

\begin{proof}
Because $\mathcal{A}^1\subseteq \mathcal{A}^2$, and by the first part of the proof of \cite[Theorem 7.4]{FLZ24}, we have 
\begin{equation}\label{eq. 70}
\overline{\mathcal{V}}^{(m,m')}(\omega)\leq\mathcal{V}^{(m,m'),2}(\omega)\leq \mathcal{V}^{(m,m'),1}(\omega),\hspace{.2cm}\mathbb{P}-a.s..
\end{equation}
Let $(m^*,(m^*)')\in \mathscr{M}^{\beta,\beta'}_\bullet([0,T];\mathbb{R}^d)$ , $\alpha^*\in  \mathcal{A}^{2}$ be the objects described in Proposition \ref{thm 6.4}. Then, by \eqref{eq 72} and \eqref{eq 73}, we immediately get that 
\begin{equation*}
    \begin{aligned}
       \overline{\mathcal{V}}^{(m^*,(m^*)')}(\omega)= \E\!\left[\int_0^T \ell^{\overline{m^*}}_t\big(\overline{X}^{\overline{\alpha}^*}_t,\overline{\alpha^*}_t\big)\ud t + \Phi^{\overline{m^*}}\big(\overline{X}^{\overline{\alpha^*}}_T\big)\bigg|\mathcal{F}^{\perp}_T\right].
    \end{aligned}
\end{equation*}
We remark that this is a major simplification in contrast with the situation in \cite{FLZ24}, where one does not know \emph{a priori} that $\overline{\mathcal{V}}^{(m^*,(m^*)')}$ is $\mathcal{F}^{\perp}_T-$measurable.
Thus, we have 
\begin{equation*}
    \begin{aligned}
        \overline{\mathcal{V}}^{(m^*,(m^*)')}(\omega)= \mathrm{essinf}_{\alpha\in \mathcal{A}}\E\!\left[\int_0^T \ell^{\overline{m^*}}_t\big(\overline{X}^{{\alpha}}_t,{\alpha}_t\big)\ud t + \Phi^{\overline{m^*}}\big(\overline{X}^{{\alpha}}_T\big)\bigg|\mathcal{F}^{\perp}_T\right],
    \end{aligned}
\end{equation*}
and by viewing again $\mathcal{A}$ as $\mathcal{A}^1$, we get by \eqref{eq. 70} that
\begin{equation}
    \overline{\mathcal{V}}^{(m^*,(m^*)')}(\omega)=\mathcal{V}^{(m^*,(m^*)'),2}(\omega)=\mathcal{V}^{(m^*,(m^*)'),1}(\omega),\hspace{.2cm}\mathbb{P}-a.s..
\end{equation}
\end{proof}

\vspace{.5cm}
\begin{rmk}
From Remark \ref{rmk 6.3} and Theorem \ref{thm 6.7} we also immediately get that
\begin{equation*}
   \overline{\mathcal{V}}^{(m^*,(m^*)'),\text{Causal}}(\omega)  = \E\!\left[\int_0^T \ell^{\overline{m^*}}_t\big(\overline{X}^{\overline{\alpha^*}}_t,\overline{\alpha^*}_t\big)\ud t + \Phi^{\overline{m^*}}\big(\overline{X}^{\overline{\alpha^*}}_T\big)\bigg|\mathcal{F}^{\perp}_T\right],
\end{equation*}
where 
\begin{equation*}
     \overline{\mathcal{V}}^{(m,m'),\text{Causal}}(\omega) :=\mathrm{essinf}_{\alpha\in \mathcal{A}^{\text{Causal}}}\E\!\left[\int_0^T \ell^{\overline{m}}_t\big(\overline{X}^{\overline{\alpha}}_t,\overline{\alpha}_t\big)\ud t + \Phi^{\overline{m}}\big(\overline{X}^{\overline{\alpha}}_T\big)\bigg|\mathcal{F}^{\perp}_T\right].
\end{equation*}
\end{rmk}

\appendix

\section{Appendix}\label{sec ap.}

\subsection{Linear RDEs}

\begin{prop}\label{prop:LinearRDEestimate} Let $p, d\in\mathbb{N}, \gamma\in(\frac{1}{3},\frac{1}{2})$ and 
 $0<\beta'\leq \beta\leq\gamma$ such that $\gamma+\beta+\beta'>1$. Additionally, let $\xi_1, \xi_2\in\R^d$ and $\pmb{\eta}_1,\pmb{\eta}_2\in \mathscr{C}^{0,\gamma}_g([0,T];\mathbb{R}^p).$ For $T>0,$ $i=1,2$ and given $B_i\in C([0,T];\R^d),$ $(A_i, A_i')\in \mathscr{D}^{\beta, \beta'}_{\eta_i}\big([0,T];L(\mathbb{R}^d,\mathbb{R}^{d\times p})\big)$,  
  there is a unique (global)
  solution 
  $(X_i, A_i X_i)$ to
  \[ \ud X_i(t) = B_{i}(t)X_{i}(t)\ud t+(A_i, A_i')(t) X_i(t) \ud \pmb{\eta}_i, X_{i}(0) = \xi_i \]
  on $[0,T].$ Then for a constant $M>0$ such that 
  $$
\sum_{i=1}^2\Big[\tnorm{\pmb{\eta}_i}_\gamma+\|B_i\|_{\infty;[0,T]}+\| (A_i, A_i')
     \|_{ \eta_i,\beta, \beta'}\Big]\leq M
  $$
  we have
  \begin{equation*}
      \begin{aligned}
           \|X_1 - X_2 \|_{\infty;[0,T]} &+ \| (X_1, A_1 X_1) ; (X_2, A_2 X_2) \|_{\eta_1, \eta_2,\gamma,\beta} \\&\lesssim  | \xi_1 - \xi_2 | + \rho_{\gamma}
     (\pmb{\eta}_1, \pmb{\eta}_2) + \| (A_1,A_1') ; (A_2,A_2')
     \|_{\eta_1, \eta_2,\beta, \beta'}+\|B_1-B_2\|_{\infty;[0,T]},
      \end{aligned}
  \end{equation*}
  up to a nonnegative multiplicative constant that depends on $M.$
\end{prop}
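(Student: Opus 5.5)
The plan is the standard two-stage argument for linear RDEs: first an a priori bound for a single equation, then a bound on the difference of the two solutions, run on short intervals and concatenated. The difference is measured in the rough-path-dependent distance $\|\cdot;\cdot\|_{\eta_1,\eta_2,\gamma,\beta}$.

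\textbf{Existence, uniqueness and a priori bound.} Existence and uniqueness of $(X_i,A_iX_i)\in\mathscr{D}^{\gamma,\beta}_{\eta_i}$ follow from Theorem \ref{thm sol_rough_affine}, applied with $\Sigma=0$, $\alpha=0$, $m=0$, deterministic $\xi_i$, and the bounded drift coefficient $B_i$ playing the role of $A$. For the a priori bound I fix a small time step $h=h(M)$ and work on an interval $[s,s+h]$. There I apply Lemma \ref{productest} to the product $A_iX_i$ and the rough integral estimate of Proposition \ref{prop:roughinteg}, in its deterministic form with $\E_\bullet R=R$. This gives
\[
\|(X_i,A_iX_i)\|_{\eta_i,\gamma,\beta;[s,s+h]}\lesssim |X_i(s)| + h^{\theta}\|(X_i,A_iX_i)\|_{\eta_i,\gamma,\beta;[s,s+h]}
\]
for some $\theta>0$ (using $\gamma+\beta+\beta'>1$). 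Choosing $h$ small absorbs the last term. Iterating over the $\lceil T/h\rceil$ intervals then yields $\|X_i\|_\infty+\|(X_i,A_iX_i)\|_{\eta_i,\gamma,\beta}\le C(M)|\xi_i|$. For the drift term I use only $\|B_i\|_\infty$ and Lipschitz-in-time bounds.

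\textbf{Difference estimate.} I set $\Delta=X_1-X_2$ and write its increments over $[u,v]$ as
\[
\int_u^v (B_1X_1-B_2X_2)\,\ud r + \Big(\int_u^v A_1X_1\,\ud\pmb{\eta}_1-\int_u^v A_2X_2\,\ud\pmb{\eta}_2\Big).
\]
The drift difference splits as $(B_1-B_2)X_1+B_2\Delta$, which is bounded by $h(\|B_1-B_2\|_\infty+\|\Delta\|_\infty)$. The rough part is handled by the standard local Lipschitz estimate for rough integrals of controlled paths with respect to two different rough paths (as in \cite[Chapter 4]{friz2020course}, the same estimate used in the proof of Theorem \ref{thm:StabilityMain}). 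That estimate bounds it by
\[
\rho_\gamma(\pmb{\eta}_1,\pmb{\eta}_2)+\|(A_1X_1,(A_1X_1)');(A_2X_2,(A_2X_2)')\|_{\eta_1,\eta_2,\beta,\beta'},
\]
times $|v-u|^{\gamma}$ and with constants depending on $M$. I control the product distance by a two-path version of Lemma \ref{productest}: expand $A_1X_1-A_2X_2=(A_1-A_2)X_1+A_2\Delta$ and similarly for the Gubinelli derivatives and the remainders. The only new term is $R^{A_1X_1}-R^{A_2X_2}$, which involves $A_i'X_i\,\delta\eta_i$; its difference contributes $\rho_\gamma(\pmb{\eta}_1,\pmb{\eta}_2)$ times the a priori bound. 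This yields
\[
\|(X_1, A_1X_1);(X_2,A_2X_2)\|_{[s,s+h]}\lesssim |\Delta(s)|+\varepsilon_0+h^{\theta}\|(X_1, A_1X_1);(X_2,A_2X_2)\|_{[s,s+h]},
\]
where $\varepsilon_0$ denotes the sum $\rho_\gamma(\pmb{\eta}_1,\pmb{\eta}_2)+\|(A_1,A_1');(A_2,A_2')\|_{\eta_1,\eta_2,\beta,\beta'}+\|B_1-B_2\|_{\infty}$ appearing on the right-hand side of the statement.

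\textbf{Absorption and patching; main obstacle.} With $h$ small, depending only on $M$, the last term is absorbed. Concatenating the intervals, and using $|\Delta(s+h)|\le|\Delta(s)|+h^\gamma\|\cdot\|$, gives a discrete Gr\"onwall bound with growth factor $C(M)^{T/h}$. This yields the claim, including the $\|X_1-X_2\|_\infty$ term and the global H\"older seminorms, since local H\"older norms on a uniform partition control the global ones up to a constant depending on $T/h$. The main obstacle is the remainder term in the two-rough-path product estimate: the remainders $R^{X_i}$ are defined relative to different drivers $\eta_i$, so the difference of remainders mixes $\delta\eta_1-\delta\eta_2$ with the unknowns. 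I would handle it by writing
\[
R^{X_1}-R^{X_2}=\delta\Delta-(X_1'-X_2')\delta\eta_1-X_2'(\delta\eta_1-\delta\eta_2)
\]
and checking that each piece is either small in $h$ or bounded by $\varepsilon_0$ times the a priori bound.
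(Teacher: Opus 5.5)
Your proposal is correct and follows essentially the paper's route: a local estimate on short intervals with the drift $\int B_iX_i\,\ud t$ kept inside the absorption (Picard) argument, then localization and concatenation, worked out in more detail than the paper's sketch. Two points need to be made explicit: the implied constant must also depend on $|\xi_1|$, as your use of the a priori bound $C(M)|\xi_1|$ shows and as is unavoidable (for $\xi_1=\xi_2$ the left-hand side is homogeneous of degree one in $\xi_1$ while the right-hand side is independent of it); and when $\beta=\gamma$ the factor $h^\theta$ in the difference step does not follow from the crude bound $(\rho_\gamma+\text{integrand distance})|v-u|^\gamma$ plus Lemma~\ref{productest}, so you must first estimate $|\delta(X_1-X_2)|_\gamma$ via the finer rough-integral expansion (where only sup-norms of $A_iX_i$ and its Gubinelli derivative enter without a power of $h$) and only then the $\beta$-seminorm and remainder components.
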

\begin{proof} In the case $B_i=0, i=1,2,$ and bounded coefficients, such estimates are well-known (e.g. as a very special case of the estimates in \cite{FHL21}). In combination with estimates for linear RDEs, e.g. along the lines of 
\cite[Ch.8]{friz2020course} the statement then follows by localization. For non-zero drifts $B^i$, the estimate follows by either repeating the proof with the terms
$\int B_iX_i d t$ included in the Picard map, or by extracting the estimate, a posteriori, from Duhamel plus
Gr\"onwall's inequality.
\end{proof}

\subsection{Backward symmetric rough Riccati}

\begin{thm}\label{thm: sym_rough_riccati}
Let $p\in\mathbb{N}, \pmb{\eta} \in \mathscr{C}^{0,\gamma}_g([0,T];\mathbb{R}^p)$, with $\gamma\in(\frac{1}{3},\frac{1}{2})$ and 
 $0<\beta'\leq \beta\leq\gamma$ such that $\gamma+\beta+\beta'>1$. Additionally, let $\bar{Q}\in \mathbb{R}^{d\times d}$ and $A,B,Q,R:[0,T]\rightarrow \mathbb{R}^{d\times d}$ bounded and Borel measurable on \([0,T]\), with
\(R_t\succ \lambda \text{Id}_{d\times d}\), for some $\lambda >0$, and \(\bar Q, Q_t\succeq 0\), and $\big({A}^{1},(A^1)'\big)\in \mathscr{D}^{\beta,\beta'}_{\eta}\big([0,T];L(\mathbb{R}^d,\mathbb{R}^{d\times p})\big)$ deterministic controlled rough path with respect to ${\eta}$. Then, the backward symmetric rough Riccati equation
\begin{equation}\label{eq:SymmetricRRiccati}
     {p}_t= \bar{Q}+\int_t^T \big({p}_sA^1_s +(A^1_s)^\top {p}_s\big) \ud\pmb{\eta}_s
    +\int_t^T\Big({p}_s {A}_s  + ({A}_s)^{\top} {p}_s - {p}_s{B}_s {R}_s^{-1} ({B}_s)^\top {p}_s + {Q}_s\Big)\ud s,
\end{equation}
has a unique (global) solution 
with $(p,p')\in \mathscr{D}^{\gamma,\beta}_{\eta}\big([0,T];\mathbb{R}^{d\times d}\big)$. Moreover, the solution is locally Lipschitz in its data and in particular in $\pmb{\eta}$ and $(A^1, (A^1)').$
\end{thm}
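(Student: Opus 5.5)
The plan is to remove the rough term by a flow transformation, reducing \eqref{eq:SymmetricRRiccati} to a classical backward matrix Riccati ODE with continuous (but $\pmb{\eta}$-dependent) coefficients. For that ODE, global existence follows from the standard LQ comparison argument.

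\textbf{Step 1: reduction to a classical Riccati ODE.} Let $U^{\pmb{\eta}}_{t\leftarrow s}$ be the linear RDE flow of \eqref{eq:KkernelRDE} driven by $A^1$, and set $\mathcal{K}(t,s)=U^{\pmb{\eta}}_{t\leftarrow 0}U^{\pmb{\eta}}_{0\leftarrow s}$. We make the ansatz
\[
p_t=\mathcal{K}(T,t)^\top \widehat p_t\,\mathcal{K}(T,t),
\]
or equivalently $\widehat p_t=\mathcal{K}(t,T)^\top p_t\mathcal{K}(t,T)$. Applying the rough product formula, Theorem \ref{rough_product_formula}, in the deterministic setting, the $\ud\pmb{\eta}$ terms cancel because $\mathcal{K}(\cdot,T)$ solves the RDE with $A^1$. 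We then find that $p$ solves \eqref{eq:SymmetricRRiccati} if and only if $\widehat p$ solves
\[
\dot{\widehat p}_t+\widehat p_t\widehat A_t+\widehat A_t^\top\widehat p_t-\widehat p_t\widehat B_t\widehat R_t^{-1}\widehat B_t^\top\widehat p_t+\widehat Q_t=0,\qquad \widehat p_T=\bar Q .
\]
The coefficients are conjugated by $\mathcal{K}(T,t)$ exactly as in \eqref{defn 3.8}. They are bounded and measurable, and by Remark \ref{rmk 3.8} they satisfy $\widehat Q_t\succeq0$ and $\widehat R_t\succ c\,\mathrm{Id}$ for some $c>0$, since $\mathcal{K}$ is continuous and invertible on $[0,T]$. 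Conversely, a solution of the ODE yields $(p,p')\in\mathscr{D}^{\gamma,\beta}_\eta$ with $p'=-(p A^1+(A^1)^\top p)$. This correspondence also transfers uniqueness: two rough solutions give two ODE solutions.

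\textbf{Step 2: global well-posedness of the classical Riccati ODE.} The ODE is locally Lipschitz, so it has a unique local solution backward from $T$. Its solution is symmetric, because the transposed matrix solves the same equation. To exclude blow-up, we derive two-sided a priori bounds.

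For the lower bound, take the deterministic LQ problem with dynamics $\dot x=\widehat A x+\widehat B u$ and cost $\int_t^T(x^\top\widehat Qx+u^\top\widehat Ru)\,\ud s+x_T^\top\bar Qx_T$. On the existence interval, a completion-of-squares verification gives $x^\top\widehat p_tx=\inf_u(\text{cost})\ge0$.

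For the upper bound, choose $u=0$. This gives $x^\top\widehat p_tx\le C|x|^2$, where $C$ depends on $\|\widehat A\|_\infty$, $\|\widehat Q\|_\infty$, $|\bar Q|$ and $T$.

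Thus $0\preceq\widehat p_t\preceq C\,\mathrm{Id}$ on the maximal interval, so the solution extends to all of $[0,T]$.

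\textbf{Step 3: local Lipschitz dependence on the data.} We write the rough solution as $p=\mathcal{K}(T,\cdot)^\top\widehat p\,\mathcal{K}(T,\cdot)$.

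First, Proposition \ref{prop:LinearRDEestimate} gives local Lipschitz dependence of $\mathcal{K}$ and $\mathcal{K}^{-1}$, in sup norm and in the controlled-path distance, on $\pmb{\eta}$ and $(A^1,(A^1)')$. Hence the hatted coefficients depend locally Lipschitz, in sup norm, on this data.

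Second, for two sets of data, the difference $\widehat p^1-\widehat p^2$ satisfies a linear ODE. Its coefficients are bounded by the uniform a priori bound of Step 2, and its forcing is controlled by the coefficient differences. Gr\"onwall's inequality then gives sup-norm Lipschitz dependence of $\widehat p$, with the uniform bound valid on bounded sets of data.

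Finally, we combine the two pieces through the product estimates of Lemma \ref{productest}, in their deterministic version together with a two-path distance analogue. This yields the estimate in $\|\cdot;\cdot\|_{\eta_1,\eta_2,\gamma,\beta}$.

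\textbf{Main obstacle.} Step 2 is routine. The main difficulty I anticipate is Step 1: carrying out the product-rule computation so that the Gubinelli derivatives and second-order terms match. Because $\pmb{\eta}$ is geometric, the product rule holds without correction terms. I would therefore first verify the identity for smooth $\eta$, where it is a direct calculus computation, and then pass to the limit using continuity of RDE solutions in $\rho_\gamma$. The stability part of Step 3 also needs care, because the two controlled-path distances are taken with respect to different drivers $\eta_1,\eta_2$.
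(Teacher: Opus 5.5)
Your proposal is correct, but it takes a different route from the paper. The paper proves this theorem in one line. It invokes \cite[Theorem 4.2]{bugini2025rough}, a well-posedness and stability result for forward rough Riccati equations from robust Kalman--Bucy filtering, and remarks that the backward time direction is harmless for deterministic equations.

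You instead conjugate by the flow $\mathcal{K}^{\pmb{\eta}}(T,\cdot)$, the same transformation the paper uses for the Volterra problem in Theorem \ref{thm 3.9} and Remark \ref{rmk 3.8}. The $\ud\pmb{\eta}$ term cancels because it enters in the two-sided congruence form $pA^1+(A^1)^\top p$, with the same flow acting on both sides. Positivity of $\bar Q$, $\widehat Q_t$ and $\widehat R_t$ survives the congruence. So non-explosion reduces to the classical LQ bound $0\preceq\widehat p_t\preceq C\,\mathrm{Id}$, and stability reduces to Proposition \ref{prop:LinearRDEestimate} plus Gr\"onwall.

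What your route buys:
\begin{itemize}
\item It is self-contained in the paper's own tools: no external rough Riccati theory and no informal time-reversal argument.
\item It gives explicit a priori bounds in terms of $\sup_t|\mathcal{K}^{\pmb{\eta}}(T,t)^{\pm1}|$ and sup norms of the data.
\item It shows why nothing beyond $\bar Q,Q_t\succeq0$ and $R_t\succ\lambda\,\mathrm{Id}$ is needed. Contrast the non-symmetric equation \eqref{eq:rRiccati}: there the left and right multiplications involve different flows, and the paper must impose (S4-R), or (S5-R) for the FBODE route.
\end{itemize}
The price is that your argument depends on this special linear, two-sided structure of the rough term, whereas the cited result is a genuinely rough argument.

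You also reverse the paper's logical order. Step 3 of Theorem \ref{thm 3.9} derives the classical Riccati from this theorem, while you prove the classical case first. That is fine, since your Step 2 stands on its own.

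One small caution concerns Step 1. Apply Theorem \ref{rough_product_formula} directly, in its deterministic case $\sigma^1=\sigma^2=0$, in both directions. Your suggested fallback (verify the identity for smooth $\eta$ and pass to the limit) would not work for the uniqueness direction. An arbitrary rough solution $p$ is not a priori a limit of solutions driven by smooth paths unless you already have the stability you are trying to prove.
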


\begin{proof}
Immediate from  \cite[Theorem 4.2]{bugini2025rough}. Note however that \eqref{eq:SymmetricRRiccati} has a terminal condition and evolves backward in time. This poses no issues since, on the one hand, the Picard iteration scheme is symmetric with respect to time. On the other hand, we can proceed by analyzing the "backward" increments with the usual time flow, i.e. from the left point in time to the right. Thus, the aforementioned result applies directly to backward rough Riccati equations. Of course, such a strategy works since we are only dealing with deterministic equations. 
\end{proof}

\subsection{Rough stochastic product rule}

As before, we assume that \((\Omega,\mathcal F,\mathbb{P})\) is a complete probability space and $(\mathcal{F}_t)_{t\in\mathbb{R}_+}$ the usual augmentation of a filtration generated by a $q-$dimensional Brownian motion $W$ and a $\sigma$-algebra $\mathcal{F}_0^0$ independent to the Brownian motion.

\begin{thm}\label{rough_product_formula}
 Let $p\in\mathbb{N}, \pmb{\eta} \in \mathscr{C}^{0,\gamma}_g([0,T];\mathbb{R}^p)$, with $\gamma\in(\frac{1}{3},\frac{1}{2})$ and 
 $0<\beta''\leq \beta'\leq\gamma$ such that $\gamma+\beta'+\beta''>1$. Furthermore, let $X, Y$ be progressively measurable processes with values in $\mathbb{R}^{d\times d}, \mathbb{R}^d (\text{or}\hspace{.1cm} \mathbb{R}^{d\times d})$ respectively, $(X',X'')\in \bigcap_{k =2}^\infty\textbf{D}^{\beta',\beta''}_{\eta}\mathbb{L}^k\big([0,T];L(\mathbb{R}^p,\mathbb{R}^{d\times d})\big),$ $ (Y',Y'')\in \bigcap_{k =2}^\infty\textbf{D}^{\beta',\beta''}_{\eta}\mathbb{L}^k\big([0,T];\mathbb{R}^{d\times p}\big)$ $\big(\text{or}\hspace{.1cm}\bigcap_{k =2}^\infty\textbf{D}^{\beta',\beta''}_{\eta}\mathbb{L}^k\big([0,T];L(\mathbb{R}^p,\mathbb{R}^{d\times d})\big)\big)$ stochastic controlled rough paths with respect to ${\eta}$, $X_0,Y_0\in \mathcal{F}_0$, and $b^1,b^2,\sigma^1,\sigma^2$ progressively measurable processes with $$\mathbb{E}[|X_0|^k]+\mathbb{E}[|Y_0|^k]+\sum_{i=1}^2\Big[\sup_{t\in[0,T]}\mathbb{E}\big[|b^i_t|^k\big] + \sup_{t\in[0,T]}\mathbb{E}\big[|\sigma^i_t|^k\big]\Big] < \infty,\hspace{.2cm}\text{for every}\hspace{.1cm} k\in [2,\infty),$$ such that
 \begin{align*}
     X_t &= X_0+\int_0^t b^1_s\ud s +\int_0^t \sigma^1_s\ud W_s+\int_0^t(X',X'')_s\ud \pmb{\eta}_s,\\
     Y_t &= Y_0+\int_0^t b^2_s\ud s +\int_0^t \sigma^2_s\ud W_s+\int_0^t(Y',Y'')_s\ud \pmb{\eta}_s.
 \end{align*}
 Then, for every $t\in[0,T]$, $\mathbb{P}-$a.s., we have
 \begin{gather*}
     X_tY_t = X_0Y_0 + \int_0^t \Big(X_sb^2_s+b^1_sY_s+ \sum_{i=1}^q (\sigma^2_i)_s(\sigma^1_i)_s\Big)\ud s + \int_0^t \big( X_s \sigma^2_s + \sigma^1_s Y_s \big) \ud W_s + \int_0^t ({Z}',{Z}'')_s \ud \pmb{\eta}_s,
 \end{gather*}
 where $({Z}',{Z}''):=\big(X Y' + X'Y,2X'Y'+  X Y'' + X'' Y\big) \in \bigcap_{k=2}^{\infty}\textbf{D}^{\beta',\beta''}_{\eta}\mathbb{L}^k\big([0,T];\mathbb{R}^d\big)$.
\end{thm}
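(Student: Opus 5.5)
The plan is to reduce the rough stochastic product rule to a Riemann-sum argument on a partition $\Pi$ of $[0,t]$. We start from the telescoping identity
\[
X_tY_t-X_0Y_0=\sum_{[u,v]\in\Pi}\big(X_u\,\delta Y_{u,v}+\delta X_{u,v}\,Y_u+\delta X_{u,v}\,\delta Y_{u,v}\big).
\]
Each increment $\delta X_{u,v}$ (and likewise $\delta Y_{u,v}$) is then split into four pieces: the drift part $\int_u^v b^1$, the It\^o part $\int_u^v\sigma^1\ud W$, the first-order rough part $X'_u\delta\eta_{u,v}+X''_u\eta^{(2)}_{u,v}$, and a remainder $R_{u,v}$. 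By Proposition \ref{prop:roughinteg}, this remainder satisfies $\|R_{u,v}\|_k\lesssim|v-u|^{\gamma+\beta'}$ and $\|\mathbb{E}_u R_{u,v}\|_k\lesssim|v-u|^{\gamma+\beta'+\beta''}$. Integrability in every $\mathbb{L}^k$ is assumed for all coefficients, so every product below can be handled with H\"older's inequality.

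The sums are then identified term by term. First, $\sum X_u(\int_u^v b^2+\int_u^v\sigma^2\ud W)$ converges in probability to $\int X b^2\ud s+\int X\sigma^2\ud W$; this is the classical Itô argument, using the continuity of $X$ in $\mathbb{L}^k$ (which follows from the $\beta'$-H\"older control). The symmetric sums with $Y_u$ behave the same way. Second, in the cross term $\sum\delta X_{u,v}\delta Y_{u,v}$ the product of the two It\^o parts gives the bracket $\int\sum_i\sigma^2_i\sigma^1_i\ud s$. The product of the rough first-order parts gives $\sum X'_u\delta\eta\otimes\delta\eta\,Y'_u$, and here geometricity is used: $\mathrm{Sym}(\eta^{(2)}_{u,v})=\frac12\delta\eta_{u,v}\otimes\delta\eta_{u,v}$. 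The diagonal terms combine with $X_uY''_u\eta^{(2)}_{u,v}+X''_uY_u\eta^{(2)}_{u,v}$ to produce exactly the germ $Z'_u\delta\eta_{u,v}+Z''_u\eta^{(2)}_{u,v}$, with $Z'=XY'+X'Y$ and $Z''=2X'Y'+XY''+X''Y$. The remaining mixed products (It\^o $\times$ rough, drift $\times$ anything, rough $\times$ remainder) are all $o(|v-u|)$ in $\mathbb{L}^{k}$. For the It\^o $\times$ rough products one uses that $X'_u\delta\eta_{u,v}$ is $\mathcal{F}_u$-measurable, so the product with an It\^o increment is a martingale difference; its sum is then controlled in $\mathbb{L}^2$ by $\sum|v-u|^{1+2\gamma}\to0$.

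Third, and this is the step I expect to be the main obstacle, the terms $X_uR^Y_{u,v}$, $R^X_{u,v}Y_u$ and the remainder cross products must be shown to vanish. The first have only $\|R\|_k\lesssim|v-u|^{\gamma+\beta'}$ with $\gamma+\beta'<1$, so a plain size estimate does not suffice. We handle them with the stochastic sewing lemma (Lê): write $R^Y_{u,v}=(R^Y_{u,v}-\mathbb{E}_uR^Y_{u,v})+\mathbb{E}_uR^Y_{u,v}$. The conditional-mean part is of order $|v-u|^{\gamma+\beta'+\beta''}$ with exponent $>1$, so its sum vanishes. The centered part is a sum of martingale differences of size $|v-u|^{\gamma+\beta'}>|v-u|^{1/2}$, and Burkholder–Davis–Gundy gives convergence to zero. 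An alternative is to note that, once the germ $A_{u,v}$ collecting all the identified terms is fixed, one checks the two stochastic sewing conditions $\|\delta A_{s,u,t}\|_k\lesssim|t-s|^{1/2+\epsilon}$ and $\|\mathbb{E}_s\delta A_{s,u,t}\|_k\lesssim|t-s|^{1+\epsilon}$, in the spirit of \cite{FHL21}, and then invokes uniqueness of the sewing limit. Fourth, it remains to verify that $(Z',Z'')\in\bigcap_k\textbf{D}^{\beta',\beta''}_\eta\mathbb{L}^k$. This follows from Lemma \ref{productest} applied to each product, with $X,Y$ themselves controlled by $(X',Y')$ via Proposition \ref{prop:roughinteg}; integrability for all $k$ lets us pair $\mathbb{L}^{2k}$ bounds through H\"older's inequality in place of the $\mathbb{L}^\infty$ factor in that lemma. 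With this, the rough integral $\int(Z',Z'')\ud\pmb\eta$ is well defined, and it coincides with the limit of the germ sums, which concludes the proof.
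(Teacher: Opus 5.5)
Your argument is correct, but it takes a different route from the paper. The paper stacks $(X,Y)$ into a single hybrid rough It\^o process and applies the general rough It\^o formula of \cite[Proposition 2.16]{bugini2024malliavin} to the bilinear map $G(s_1,s_2)=s_1s_2$. The only work there is computing $DG$ and $D^2G$ and noting that $D^\ell G=0$ for $\ell\ge 3$ with at most linear growth of $DG$. Together with the all-moment integrability, this is what is needed to meet the hypotheses of that formula for an unbounded $G$.

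You instead prove the bilinear case by hand. Because $G$ is bilinear, the telescoping expansion $X_u\delta Y_{u,v}+\delta X_{u,v}Y_u+\delta X_{u,v}\delta Y_{u,v}$ is exact, so there is no Taylor remainder. The only genuinely rough-stochastic difficulty is $\sum X_uR^Y_{u,v}$. You handle it correctly by splitting it into two parts:
\begin{itemize}
\item a conditional-mean part, with exponent $\gamma+\beta'+\beta''>1$;
\item a martingale-difference part, with squared exponent $2(\gamma+\beta')>1$. This holds because $\gamma+2\beta'\ge\gamma+\beta'+\beta''>1$ forces $\gamma+\beta'>\tfrac12$.
\end{itemize}
This is stochastic sewing in miniature. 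The paper's proof is a few lines but outsources all the analysis. Yours is self-contained and shows exactly where geometricity and integrability enter. Geometricity is used to trade $\delta\eta\otimes\delta\eta$ for $2\,\mathrm{Sym}\,\eta^{(2)}$. The $\bigcap_k\mathbb{L}^k$ assumption lets H\"older pairings replace the $\mathbb{L}^\infty$ factor in Lemma \ref{productest}. The price is more bookkeeping, though the same scheme essentially yields the It\^o formula for any polynomial $G$.

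Three small precision points:
\begin{itemize}
\item The term $2X'Y'$ in $Z''$ must be read as the symmetric bilinear form $(a,b)\mapsto X'^{a}Y'^{b}+X'^{b}Y'^{a}$; this is $D^2G(Z)(Z',Z')$ in the paper. Pairing a non-symmetrized $2X'^{a}Y'^{b}$ with $\eta^{(2)}$ would leave a L\'evy-area term of order $|v-u|^{2\gamma}$ whose sum does not vanish. Your \emph{diagonal terms} phrasing should say this explicitly.
\item The It\^o $\times$ remainder cross term $\big(\int_u^v\sigma^1\,\ud W\big)R^Y_{u,v}$ is not a martingale difference, since $R^Y_{u,v}$ is not $\mathcal{F}_u$-measurable. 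It is, however, killed by size, because $\tfrac12+\gamma+\beta'>1$.
\item ``Size $|v-u|^{\gamma+\beta'}>|v-u|^{1/2}$'' should read ``exponent $\gamma+\beta'>\tfrac12$''.
\end{itemize}
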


\begin{proof}
We treat the case where $Y_t$ takes values in $\mathbb{R}^d;$ the case where it takes values in $\mathbb{R}^{d\times d}$ is completely similar.

We define
$(\pi^1)^{-1}:\mathbb{R}^{d\times d}\rightarrow \mathbb{R}^{d\times d}\times \mathbb{R}^{d}, (\pi^2)^{-1}:\mathbb{R}^d\rightarrow\mathbb{R}^{d\times d}\times \mathbb{R}^{d}$ where $(\pi^1)^{-1}(s):= (s,0), (\pi^2)^{-1}(s):= (0,s)$. Then, the aggregate vector $Z_t:=(X_t,Y_t)=(\pi^1)^{-1}(X_t)+ (\pi^2)^{-1}(Y_t)$ is progressively measurable with values in $\mathbb{R}^{d\times d}\times \mathbb{R}^d$, $\mathbb{E}[|Z_0|^k]<\infty$, for every $k\in [2,\infty)$, and $(\widetilde{Z}',\widetilde{Z}''):=\big((\pi^1)^{-1}(X')+ (\pi^2)^{-1}(Y'), (\pi^1)^{-1}(X'')+ (\pi^2)^{-1}(Y'')\big) \in \bigcap_{k=2}^{\infty} \textbf{D}^{\beta',\beta''}_{\eta}\mathbb{L}^k$, is a stochastic controlled rough path with respect to $\pmb{\eta}$ such that
\begin{gather*}
    Z_t=Z_0+\int_{0}^t b_s\ud s + \int_0^t \sigma_s \ud W_s + \int_{0}^t (\widetilde{Z}',\widetilde{Z}'')_s \ud \pmb{\eta}_s,
\end{gather*}
where $b_t:= (\pi^1)^{-1}(b^1_t)+ (\pi^2)^{-1}(b^2_t), \sigma_t:= (\pi^1)^{-1}(\sigma^1_t)+ (\pi^2)^{-1}(\sigma^2_t)$.
Now, let us define the function $G:\mathbb{R}^{d\times d}\times \mathbb{R}^{d}\rightarrow \mathbb{R}^d$ where $G(s_1,s_2):= s_1s_2$. Because ${G}$ is bilinear we get, for every $(s_1,s_2)\in \mathbb{R}^{d\times d}\times \mathbb{R}^d$, that
\begin{align*}
    \big[DG(s_1,s_2)\big](j_1,j_2) &= s_1 j_2 + j_1 s_2,\\
     \big[D^2G(s_1,s_2)\big](j_1,j_2)(h_1,h_2)&= j_1h_2 + h_1j_2  \hspace{.2cm}\text{and}\\
     \big[D^\ell G(s_1,s_2)\big](\cdot)&= 0, \hspace{.1cm}\ell\geq 3.
\end{align*}
Obviously, $DG$ is linear and $D^\ell G$ constant for ever $\ell\geq 2$, so $|D^\ell G(s)|\leq |D^2G|(1+|s|)$ for every $\ell \in\mathbb{N}$. 
By direct calculations we have
\begin{gather*}
    \big[DG(Z_t)\big](b_t) = X_tb^2_t+b^1_tY_t,\hspace{.2cm}
    \big[DG(Z_t)\big](\sigma_t) = X_t \sigma^2_t + \sigma^1_tY_t,\\
    \big[DG(Z_t)\big](Z'_t) = X_t Y'_t + X'_tY_t,\hspace{0.02cm}\big[D^2G(Z_t)\big](Z'_t)(Z'_t)= 2X'_t Y'_t\\
     \big[DG(Z_t)\big](Z''_t) =  X_t Y''_t + X''_t Y_t,\\    \sum_{i=1}^q\big[D^2G(Z_t)\big]\big((\sigma_i)_t\big)\big((\sigma_i)_t\big)= 2\sum_{i=1}^q (\sigma^2_i)_t(\sigma^1_i)_t.
\end{gather*}
We apply \cite[Proposition 2.16]{bugini2024malliavin} to $G(Z_t)$ and the proof is complete.
\end{proof}

\section*{Acknowledgments}
\smallskip
\noindent \textbf{Funding}  
PKF, UH and ST acknowledge support from 
Deutsche Forschungsgemeinschaft (DFG, German Research Foundation) – CRC/TRR 388 
"Rough Analysis, Stochastic Dynamics and Related Fields" – Project ID 516748464 (B04).

\printbibliography

\end{document}